\newcommand{\mz}{\ensuremath{\mathbb Z}}
\newcommand{\mr}{\ensuremath{\mathbb R}}
\newcommand{\mh}{\ensuremath{\mathbb H}}
\newcommand{\shortmod}{\ensuremath{\negthickspace \negthickspace \negthickspace \pmod}}
\newcommand{\half}{\ensuremath{ \frac{1}{2}}}
\newcommand{\intR}{\int_{-\infty}^{\infty}}
\newcommand{\sumstar}{\sideset{}{^*}\sum}
\DeclareMathOperator{\vol}{vol}
\theoremstyle{plain}
	\newtheorem{mytheo}{Theorem} [section]
	\newtheorem{myprop}[mytheo]{Proposition}
	\newtheorem{mycoro}[mytheo]{Corollary}
     \newtheorem{mylemma}[mytheo]{Lemma}
	\newtheorem{myconj}[mytheo]{Conjecture}
\theoremstyle{remark}
\numberwithin{equation}{section}
\begin{document}

%
 \author{Matthew P. Young}
 \address{Department of Mathematics \\
 	  Texas A\&M University \\
 	  College Station \\
 	  TX 77843-3368 \\
 		U.S.A.}
 \email{myoung@math.tamu.edu}
 \thanks{This material is based upon work supported by the National Science Foundation under agreement No. DMS-1401008.  Any opinions, findings and conclusions or recommendations expressed in this material are those of the author and do not necessarily reflect the views of the National Science Foundation.
 }


 \begin{abstract}
We show an asymptotic formula for the $L^2$ norm of the Eisenstein series $E(z,1/2+iT)$ restricted to a segment of a geodesic connecting infinity and an arbitrary real.  For generic geodesics of this form, the asymptotic formula shows that the Eisenstein series satisfies restricted QUE.  On the other hand, for rational geodesics, the Eisenstein series does not satisfy restricted quantum ergodicity.

As an application, we show that the zero set of the Eisenstein series intersects every such geodesic segment, provided $T$ is large.  

We also make analogous conjectures for the Maass cusp forms.
In particular, we predict that cusp forms do not satisfy restricted quantum ergodicity for rational geodesics.
 \end{abstract}
 \title{Equidistribution of Eisenstein series on geodesic
segments}
 \maketitle
 
\section{Introduction}
 \subsection{Background and previous results}
The behavior of eigenfunctions of the Laplacian on an arithmetic surface $\Gamma \backslash \mh$ is a rich subject with connections to analytic number theory and spectral geometry.   One of the motivating questions in the area is the Quantum Unique Ergodicity (QUE) conjecture of Rudnick and Sarnak \cite{RudnickSarnak}.  The QUE conjecture is
now a theorem of Lindenstrauss \cite{Lindenstrauss} (complete in the compact setting), with input by Soundararajan \cite{SoundQUE} to treat non-cocompact congruence subgroups.  
Arithmetic cases of the QUE conjecture are known to be nearly equivalent to subconvexity for certain $L$-functions, via Watson's formula \cite{Watson}.  This is one well-known example of a nontrivial connection between quantum chaos and the analytic theory of $L$-functions.
We refer the reader to the survey articles \cite{SarnakSpectra} \cite{SarnakQUE} for more background and motivation.

The QUE conjecture concerns the equidistribution of eigenfunctions restricted to Jordan measurable sets.  A more advanced question is to study the behavior of restrictions of eigenfunctions to lower-dimensional submanifolds.
For instance, one may fix a curve on $\Gamma \backslash \mh$, and study the analytic behavior of the eigenfunctions restricted to the curve.  Such questions were initially raised by Reznikov \cite{Reznikov} who used representation theory techniques for their study.  Later, Ghosh, Reznikov, and Sarnak \cite{GRS} obtained estimates (both upper and lower bounds) for the $L^2$ norm of a Maass form along special curves such as horocycles or geodesic segments, with the application of counting sign changes of the Maass form along the curve (see also work of J. Jung \cite{JJ} for further progress in this direction).  Marshall \cite{Marshall} has used the amplification method to bound from above the $L^2$ norm of a Maass form restricted to an arbitrary geodesic segment, with a power saving over the ``local'' bound (which is the universal bound of \cite{BGT} holding for a general compact Riemannian manifold).  Toth and Zelditch \cite{TothZelditch} and Dyatlov and Zworski \cite{DZ}  have shown a quantum ergodic restriction theorem that in particular holds on a generic geodesic in a compact hyperbolic surface.

The author \cite{Young} recently showed that the Eisenstein series $E_T(z) = E(z, 1/2 + iT)$ for $\Gamma = SL_2(\mz)$ equidistributes along the geodesic connecting $0$ and $i \infty$ (this is an analog of QUE but along the curve).  The Eisenstein series is a good test case for more advanced questions since it behaves in many respects like the cusp forms, yet is more explicit and easier to handle.  In this paper, we study the behavior of the Eisenstein series restricted to a geodesic segment of the form $\{x + iy : 0 < \alpha < y < \beta \}$.


To get our bearings, we quote two results from the literature.  Suppose that $\phi: SL_2(\mz)\backslash \mh \rightarrow \mr$ is smooth and compactly-supported.  Luo and Sarnak \cite{LuoSarnakQUE} showed QUE for Eisenstein series on $SL_2(\mz) \backslash \mh$, that is,
\begin{equation}
\label{eq:ETQUE}
\int_{SL_2(\mz)\backslash \mh} \phi(z) |E_T(z)|^2 \frac{dx dy}{y^2} = \frac{3}{\pi} \log(1/4 + T^2)  \int_{SL_2(\mz)\backslash \mh} \phi(z)  \frac{dx dy}{y^2} + o(\log T),
\end{equation}
as $T \rightarrow \infty$.  The interested reader may find a full main term with a power saving error term for \eqref{eq:ETQUE} in \cite[(4.13)]{Young}.   We also mention that Jakobson \cite{Jakobson} proved QUE for Eisenstein series on the unit tangent bundle $SL_2(\mz) \backslash SL_2(\mr)$.
Suppose that $\psi$ is a smooth function with support on $[\alpha, \beta]$.
Define
\begin{equation}
\label{eq:IpsixTdef}
 I_{\psi}(x,T) = \int_0^{\infty} \psi^2(y)
|E_T(x+iy)|^2 \frac{dy}{y}.
\end{equation}
The author \cite{Young} showed
\begin{equation}
\label{eq:YoungQUEcitation}
I_{\psi}(0,T) = 2 \frac{3}{\pi} \log(1/4 + T^2) \int_0^{\infty} \psi^2(y) \frac{dy}{y} + o(\log T).
\end{equation}
Again, one has a fully developed main term with a power saving error term in \eqref{eq:YoungQUEcitation}. 
Comparing \eqref{eq:ETQUE} and \eqref{eq:YoungQUEcitation}, 
note the Eisenstein series (squared) is twice as large on the distinguished geodesic $x=0$ as it is on any fixed disk in $SL_2(\mathbb{Z}) \backslash \mh$.

\subsection{The main asymptotic result}
Our main result is an asymptotic for $I_{\psi}(x,T)$ for arbitrary $x$.  
The shape of the main term depends on rational approximations to $2x$.
  By the Dirichlet approximation theorem, for each real $Q \geq 1$, there exist integers $a,q$ so that
\begin{equation}
\label{eq:DirichletApproximation}
 2x = \frac{a}{q} + \theta, \quad \text{where} \quad (a,q) = 1, \quad 1 \leq q \leq Q, \quad |q \theta| \leq Q^{-1}. 
\end{equation}
\begin{mytheo}
\label{thm:QUE}
Let $x \in \mr$ and $T \geq 1$.
Choose $Q=Q(T)$ so that $T^{\delta} \ll Q \ll T^{1/3-\delta}$ (where $\delta > 0$ is fixed),  and let $a,q, \theta$ satisfy \eqref{eq:DirichletApproximation}.
Then, as $T \rightarrow \infty$, we have
\begin{equation}
\label{eq:QUE}
 I_{\psi}(x,T)  =   \frac{3}{\pi} \log(1/4 + T^2) \int_0^{\infty} \psi^2(y) \Big[1 + B_q(1,T) J_0\Big(\frac{\theta T}{y}\Big)\Big] \frac{dy}{y} 
  + O((\log T)^{\frac{35}{36} + \varepsilon}),
\end{equation}
where $B_q(s,T)$ is a finite Euler product defined by \eqref{eq:BqDef} below, and $J_0(z)$ is the Bessel function of the first kind.  The implied constant is uniform in $x$.
\end{mytheo}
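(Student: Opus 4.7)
The plan is to start from the Fourier expansion
\begin{equation*}
E_T(x+iy) = y^{1/2+iT} + \varphi_T\, y^{1/2-iT} + \frac{c_T \sqrt{y}}{\xi(1+2iT)} \sum_{n \neq 0} \tau_{iT}(n) K_{iT}(2\pi |n| y)\, e(nx),
\end{equation*}
(with $\tau_{iT}(n) = \sum_{ab=|n|}(a/b)^{iT}$), square it, and integrate against $\psi^2(y)\,dy/y$. The constant-term pieces, and their cross terms with the Bessel series, contribute $O(T^{-A})$: the resulting integrals have the shape $\int \psi^2(y)\, y^{2iT}\, dy/y$ or $\int \psi^2(y)\, y^{iT}\, K_{iT}(2\pi |n| y)\, dy$, both of which decay rapidly by repeated integration by parts (the transitional range $2\pi |n| y \asymp T$ being treated separately and summed over $n$).

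The substantive object is
\begin{equation*}
S := \frac{|c_T|^2}{|\xi(1+2iT)|^2} \sum_{m,n\neq 0} \tau_{iT}(m)\, \overline{\tau_{iT}(n)}\, e\big((m-n)x\big) \int_0^\infty \psi^2(y)\, y\, K_{iT}(2\pi |m| y) K_{iT}(2\pi |n| y)\,\frac{dy}{y}.
\end{equation*}
I would apply Mellin inversion to $\psi^2$ and use a Mellin--Barnes representation of $K_{iT}(\alpha) K_{iT}(\beta)$ as a contour integral of four gamma factors. The diagonal $m = n$ feeds into the Rankin--Selberg-type Dirichlet series $\sum_n |\tau_{iT}(n)|^2 n^{-s} = \zeta(s)^2 \zeta(s+2iT) \zeta(s-2iT)/\zeta(2s)$; shifting the contour past the double pole at $s = 1$, and using $|\xi(1+2iT)|^{-2}$ to absorb the $\zeta(1\pm 2iT)$ factors, produces the QUE-shaped main term $(3/\pi) \log(1/4+T^2) \int \psi^2(y)\, dy/y$.

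The off-diagonal is the heart of the proof. Set $h = m-n \neq 0$ and use $2x = a/q + \theta$ to factor $e(hx) = e(ha/(2q))\, e(h\theta/2)$. Partition $h$ into residue classes modulo $2q$: the character sum $\sum_h e(ha/(2q)) (\cdots)$, combined with the local zeta factors at primes $p \mid q$, will produce the finite Euler product $B_q(1,T)$. The remaining shifted divisor sum $\sum_n \tau_{iT}(n) \tau_{iT}(n+h)$ should be decomposed either via Voronoi summation or a spectral identity in the style of Motohashi, while the $h$-sum, twisted by $e(h\theta/2)$, is transformed by Poisson summation. Stationary-phase analysis of the resulting oscillatory integral, whose phase is linear in $\theta$, produces the Bessel function $J_0(\theta T/y)$.

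The main obstacle will be achieving the error term $O((\log T)^{35/36+\varepsilon})$: because the main term is itself only of logarithmic size in $T$, a pointwise bound on the shifted convolution sums saves nothing, and one must exploit genuine cancellation beyond what the major-arc decomposition captures. This is what pins down the range $T^\delta \ll Q \ll T^{1/3-\delta}$ (balancing the major- and minor-arc contributions) and, together with the required uniformity in $x$, calls for spectral-averaging estimates of Kuznetsov-- or large-sieve type on the remaining sums. In effect the argument generalizes and substantially sharpens the $x=0$ treatment in \cite{Young}, where the additive twist is absent and the analysis of the off-diagonal is consequently cleaner.
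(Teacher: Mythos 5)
Your proposal has the right first steps (square out the Fourier expansion, diagonal gives the Rankin--Selberg main term) but misidentifies where the secondary main term $B_q(1,T)\,J_0(\theta T/y)$ comes from, and this misidentification would derail the whole argument. In the paper, this term comes from the ``opposite diagonal'' $m=-n$, \emph{not} from the genuine off-diagonal $m \neq \pm n$. Since the $K$-Bessel factor in the Fourier expansion depends only on $|n|$, the full diagonal is $|m|=|n|$; the terms $m=n$ give the primary term $\frac{3}{\pi}\log(\frac14+T^2)\int\psi^2\,dy/y$, while the terms $m=-n$ give $\sum_n|\tau_{iT}(n)|^2 e(2nx)$, and it is \emph{this} sum which, after Dirichlet approximation $2x=a/q+\theta$, decomposition into characters mod $q$ via Lemma~\ref{lemma:changeofbasis}, and a Mellin/stationary-phase analysis of the archimedean weight $e(\theta n)$, produces both the finite Euler product $B_q(1,T)$ and the Bessel function $J_0$. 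Your plan of extracting $B_q$ and $J_0$ from a shifted-convolution analysis of $h=m-n\neq 0$ (with Motohashi/Kuznetsov) is aimed at the wrong part of the sum.

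The off-diagonal $m\neq\pm n$ plays the opposite role: it need only be \emph{bounded}, not evaluated, and this is essential because the weight $e((m-n)x)$ is highly oscillatory for generic $x$ and no asymptotic for $\sum_n\tau_{iT}(n)\tau_{iT}(n+h)e((2n+h)x)$ is available. The paper avoids this entirely by putting absolute values on the off-diagonal and using Holowinsky's sieve bound (saving a small power of $\log T$), which is enough only because the main term is itself of size $\log T$ while the trivial bound is $\log^2 T$. This is also why the final error term is $(\log T)^{35/36+\varepsilon}$ rather than a power of $T$. Separately, the paper does not work directly with the $y$-integral of $|E_T^*|^2$: it applies Mellin--Parseval first, writing $I_\psi=\frac{1}{2\pi}\int|F(it)|^2dt$, and then decomposes the $t$-integral by the size of $\Delta=T-|t|$; the ``bulk'' range $|t|\le T-o(T)$ gives the main terms as above (Proposition~\ref{prop:bulkrangeIntro}), while the near-boundary ranges require opening $\tau_{iT}$ as a Dirichlet convolution and applying Poisson summation/exponential-sum estimates (Propositions~\ref{prop:biggisht}, \ref{prop:FboundmediumDelta}, \ref{prop:verysmallDelta}). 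Your Mellin--Barnes approach to the double Bessel integral is a plausible alternative to the paper's Parseval step, but without the $|m|=|n|$ (rather than $m=n$) diagonal and without the absolute-value bound on the true off-diagonal, the argument does not reach the stated asymptotic.
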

We now discuss in detail the behavior of the right hand side of \eqref{eq:QUE}. 
 If $x=0$, then $\theta = 0$, $q=1$, $B_q(1,T) = 1$, and $J_0(0) = 1$, so this recovers the main term from \eqref{eq:YoungQUEcitation}.  More generally, if $x$ is rational and $T$ (hence $Q$) is sufficiently large, then $\theta = 0$, and we see a  family of generalizations of the main term appearing in \eqref{eq:YoungQUEcitation}.  However, one should observe that $B_q(1,T)$ fluctuates as a function of $T$; for instance, if $q=p$ is prime, then $B_q(1,T)$ simplifies as
\begin{equation}
\label{eq:Bp1T}
 B_p(1,T)  = \frac{1}{p+1} (1 + 2 \cos(T \log p) -p^{-1}).
 \end{equation}
This fluctuation shows that the Eisenstein series does not satisfy restricted quantum ergodicity (not to mention, QUE) on rational geodesics with $q > 1$.  Note $2 \cos(T \log p) $ is the eigenvalue of the Hecke operator $T_p$ applied to $E_T$.

We show in Section \ref{section:BqBound} below that $|B_q(1,T)| < 1$ for all $q \geq 2$, and it is well-known that $|J_0(x)| \leq 1$ for all real $x$, so the secondary main term with $B_q(1,T) J_0(y^{-1} \theta T)$ is never larger than the first main term.  
In fact, it is easy to show the bound $B_q(1, T) \ll q^{-1+\varepsilon}$.  

For almost all $x$, in a strong sense, the secondary term may be omitted in \eqref{eq:QUE}, as we now explain.
Note that if $q \gg (\log T)^{1/36}$, then this secondary term of magnitude $\ll q^{-1+\varepsilon} \log T$ is smaller than the error term.
As for the $J_0$-Bessel aspect, if $|\theta T| \gg (\log T)^{\varepsilon}$, then repeated integration by parts shows $\int \psi^2(y) J_0(\theta T/y) \frac{dy}{y} \ll (\log T)^{-100}$, and again we see that this secondary term is smaller than the error term.
This analysis shows that, for a given large $T \gg 1$, and each rational number $a/q$ with $q \ll (\log T)^{1/36}$, there is a small interval surrounding $a/q$ of radius $\approx T^{-1}$ so that the secondary main term is relevant.  Outside of these small intervals, the secondary term may be absorbed into the error term.  
Precisely,
for each large $T$ there exists a set $\mathcal{E} \subset [0,1]$ of measure $\ll \frac{(\log T)^{1/18+\varepsilon}}{T}$, so that if $x \in [0,1]\setminus \mathcal{E}$, then 
\begin{equation*}
I_{\psi}(x,T)  =   \frac{3}{\pi} \log(1/4 + T^2) \int_0^{\infty} \psi^2(y) \frac{dy}{y} 
  + O((\log T)^{\frac{35}{36} + \varepsilon}).
\end{equation*}
This is the ``generic" case where $x$ does not have a close approximation by a rational with small denominator.  Here $\mathcal{E}$ consists of intervals of radius $T^{-1} (\log T)^{\varepsilon}$ around each rational $a/q \in [0,1]$ with $q \ll (\log T)^{1/36}$.  

It may also be instructive to compare the special
 cases $x=0$ and $x=2^{-101}$ (say), in an informal manner.  Intuitively, it should be clear that for small values of $T$, then $I_{\psi}(0,T) \approx I_{\psi}(2^{-101}, T)$.  
 Since $E_T$ is an eigenfunction of the Laplacian with eigenvalue $1/4 + T^2$, it is expected  
   to remain roughly constant on the Planck scale (or de Broglie wavelength) of $1/T$ (see \cite[p.289]{HR}).  We therefore expect that once $T$ is roughly of size $2^{100}$ then $I_{\psi}(0,T)$ should be uncorrelated with $I_{\psi}(2^{-101}, T)$.  Now let us examine the statement of Theorem \ref{thm:QUE} with $x=2^{-101}$.  For small values of $T$, then $Q$ is also small, and this forces the rational approximation of $2x$ to be $a/q = 0/1$, whence $\theta = 2x = 2^{-100}$.  By the remarks on the behavior of the $J_0$-Bessel aspect from a previous paragraph, when $T$ is somewhat larger than $2^{100}$, then the secondary main term is small, as expected.
Once $Q > 2^{-100}$, the rational approximation $a/q$ must become $2^{-100}$, which invokes a change in the value of $B_q(1,T)$, as well as $\theta$.
Note that
it is only when $T$ is roughly $\exp(2^{100 \times 36})$ that the secondary term with $B_{2^{100}}(1,T)$ is potentially larger than the error term of \eqref{eq:QUE}.

It is also interesting to 
classify irrational $x$'s for which there are arbitrarily large values of $T$ so that $ q \leq (\log T)^{\delta}$, and $|\theta T| \leq 1$.  
These are, essentially, the properties on $x$ that require the presence of the secondary term with $B_q(1,T) J_0(y^{-1} \theta T)$ in \eqref{eq:QUE} for arbitrarily large values of $T$.
These conditions are equivalent to the existence of integers $1 \leq q_1 < q_2 < \dots$ so that
\begin{equation*}
 |\theta_n q_n| \leq \frac{q_n}{\exp(q_n^{1/\delta})},
\end{equation*}
where $2x = \frac{a_n}{q_n} + \theta_n$.  One can easily construct such $x$ by a continued fraction expansion $2x= [ b_0;b_1, b_2, \dots]$, where $b_{n+1} \geq \exp(q_n^{1/\delta})$.  



\subsection{Sign changes of the Eisenstein series}
In \cite{JungYoung}, a variant on \eqref{eq:YoungQUEcitation} was proven, where the test function $\psi$ was allowed to have support shrinking with $T$, and also the product of the Eisenstein series at two slightly shifted points was averaged.  As a consequence, it was shown that $E_T^*(iy)$ (here $E_T^*$ is real-valued and has the same absolute value as $E_T$) changes sign $\gg T^{\nu}$ times for $1 \leq y \leq 3$ for fixed $0 \leq \nu < 1/51$, and sufficiently large $T$.  It is a special property of the geodesic $x=0$ that sign changes along this curve produce nodal domains (see \cite{GRS} for details).  We are naturally led to consider a variant on Theorem \ref{thm:QUE} but with the product of two shifted Eisenstein series.  See Theorem \ref{thm:shiftedQUE} for the statement of this result, whose proof is a small modification of that of Theorem \ref{thm:QUE}.  As a consequence we have:
\begin{mycoro}
\label{coro:signchanges}
Fix $0 < \alpha < \beta$.  There exists a large $T_0$ depending on $\alpha,\beta$ only so that if $T \geq T_0$, then
$E_T^*(x+iy)$ changes sign at least once for $\alpha < y < \beta$.  
\end{mycoro}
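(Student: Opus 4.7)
The plan is a proof by contradiction via an $L^1$-versus-$L^2$ comparison. Assume for contradiction that for arbitrarily large $T$ the real-valued function $E_T^*(x+iy)$ has constant sign throughout $\alpha < y < \beta$; by symmetry we may assume $E_T^*(x+iy) \geq 0$ there. Fix a smooth non-negative bump $\psi$ supported in $[\alpha', \beta'] \subset (\alpha, \beta)$ with $\psi \not\equiv 0$, depending only on $\alpha$ and $\beta$, and set
\[
J(T) := \int_0^{\infty} \psi^2(y) E_T^*(x+iy) \frac{dy}{y}.
\]

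The first step is a lower bound $J(T) \gg T^{-1/2-\varepsilon} \log T$. Since $|E_T^*| = |E_T|$, Theorem \ref{thm:QUE} yields $\int_0^\infty \psi^2(y) |E_T^*(x+iy)|^2 \frac{dy}{y} \gg \log T$, with implied constant depending only on $\psi$; the lower bound on the main term requires that $1 + B_q(1,T) J_0(\theta T/y)$ stays bounded below by a positive absolute constant, which follows from $|B_q(1,T)| < 1$ strictly for $q \geq 2$ and from $\inf J_0 > -1$ when $q = 1$ (the only case with $B_q = 1$). Combined with the trivial polynomial sup-norm bound $\|E_T^*\|_{L^\infty([\alpha',\beta'])} = \|E_T\|_{L^\infty([\alpha',\beta'])} \ll T^{1/2+\varepsilon}$ coming from the Fourier expansion, and using the sign assumption to rewrite $J(T) = \int \psi^2 |E_T^*| \, dy/y$, this yields
\[
J(T) \;\geq\; \frac{\int_0^\infty \psi^2(y) |E_T^*(x+iy)|^2 \, dy/y}{\|E_T^*\|_{L^\infty([\alpha',\beta'])}} \;\gg\; \frac{\log T}{T^{1/2+\varepsilon}}.
\]

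For a contradicting upper bound, I would compute $J(T)$ directly from the Fourier expansion of $E_T^*$. The two constant-term contributions reduce to Mellin transforms $\widetilde{\psi^2}(1/2 \mp iT)$ times slowly-growing scalar factors from the $\xi$-normalization, which decay faster than any polynomial in $T$ for smooth compactly supported $\psi$. Each non-constant Fourier term reduces to an integral of the form $\int_0^\infty \psi^2(y) y^{-1/2} K_{iT}(2\pi n y) \, dy$; inserting $K_{iT}(u) = \int_0^\infty e^{-u \cosh t} \cos(Tt)\, dt$ and repeatedly integrating by parts in the $t$-variable shows that this is $O_A((nT)^{-A})$ for every $A > 0$. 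Summing over $n$, with the range $n \gg T/\alpha$ controlled by the exponential decay of $K_{iT}$, gives $|J(T)| \ll_A T^{-A}$ for every $A$. Choosing any $A > 1/2 + \varepsilon$ contradicts the lower bound once $T$ exceeds some $T_0 = T_0(\alpha, \beta)$, producing the required sign change.

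The principal technical obstacle lies in the last step: one must track the dependence of the $K_{iT}$-integral bound on $n$ carefully enough that the polynomial decay in $T$ survives summation against the divisor-type Fourier coefficients of size $O(n^\varepsilon)$. This is classical but requires some bookkeeping. An alternative approach, modeled on \cite{JungYoung}, would invoke the shifted asymptotic in Theorem \ref{thm:shiftedQUE} together with a two-point correlation argument, but this machinery is not needed to produce a single sign change.
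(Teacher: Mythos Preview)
Your upper bound $J(T) = O_A(T^{-A})$ is false, and this breaks the argument. The integration-by-parts in $t$ on $K_{iT}(u) = \int_0^\infty e^{-u\cosh t}\cos(Tt)\,dt$ does give $K_{iT}(u) = O_{A,u}(T^{-A})$ for $u$ in a compact set, but you have dropped the normalizing factor $\rho^*(1)$ from the Fourier expansion \eqref{eq:ETFourier}. Since $\rho^*(1) \asymp e^{\pi T/2}/|\zeta(1+2iT)|$ (see \eqref{eq:rhosize}), polynomial decay of $K_{iT}$ is useless here; the product $\rho^*(1)K_{iT}(u)$ is of polynomial size in $T$, not rapidly decaying. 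Concretely, the approximation $F_0$ from \eqref{eq:F0def} gives, at $s=0$,
\[
J(T) \approx \rho^*(1)\gamma_{V_T}(1/2)\sum_{|n|\asymp T}\frac{\tau_{iT}(n)e(nx)}{|n|^{1/2}}\psi^2\Big(\frac{T}{2\pi|n|}\Big),
\]
and $\rho^*(1)\gamma_{V_T}(1/2)\asymp T^{-1/2}/|\zeta(1+2iT)|$ while the sum has mean-square $\asymp \log T$ over $x\in[0,1]$. Thus $J(T)$ is typically of size $T^{-1/2}(\log T)^{O(1)}$, the \emph{same} order as your lower bound, and no contradiction is available. The ``low-frequency'' content of $E_T^*(x+iy)$ in the $y$-variable comes precisely from the Bessel turning-point region $2\pi n y\approx T$, and it does not integrate away.

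The paper's proof is not a refinement of this idea but a genuinely different mechanism. It works with the \emph{bilinear} shifted integral
\[
I_{\psi,a}(x,T)=\int_0^\infty \psi(y)E_T^*(x+iy)\,\psi\big(y(1+\tfrac{a}{T})\big)E_T^*\big(x+iy(1+\tfrac{a}{T})\big)\frac{dy}{y},
\]
and shows (Theorem \ref{thm:shiftedQUE}) that its main term is $\frac{3}{\pi}\log(1/4+T^2)\int\psi^2(y)[J_0(a)+B_q(1,T)J_0(|y^{-1}\theta T+ia|)]\,dy/y$. Choosing $a=a_0\approx 3.831$ (the global minimizer of $J_0$) makes $J_0(a_0)\approx -0.403$, while $|B_q(1,T)J_0(|y^{-1}\theta T+ia_0|)|\le |J_0(a_0)|$ with strict inequality unless $q=1$ and $\theta=0$; hence $I_{\psi,a_0}(x,T)<0$ for all $x$ once $T$ is large. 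Negativity of this correlation forces $E_T^*(x+iy)$ and $E_T^*(x+iy(1+a_0/T))$ to have opposite signs for some $y$ in the support of $\psi$, producing the sign change. Your final paragraph already names this route; it is in fact necessary, not optional.
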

Hence, the nodal set of $E_T^*$ intersects every geodesic segment of the form $x+iy: \alpha < y < \beta$, provided $T$ is sufficiently large.  We emphasize that $T_0$ is independent of $x$.  

In Corollary \ref{coro:signchanges}, compared to the result of \cite{JungYoung}, the gain in generality in $x$ has a tradeoff in that the number of sign changes is no longer a power of $T$.  Since the error term in Theorem \ref{thm:QUE} saves a small power of $\log T$, one would expect that, with extra technical work, one could obtain a lower bound for the number of sign changes growing as a small power of $\log T$.  As it stands, Corollary \ref{coro:signchanges} implies that for each $n$, there exists $T$ large enough (compared to $n$) so that $E_T^*(x+iy)$ changes sign at least $n$ times for $\alpha < y < \beta$.  This follows by cutting the interval $(\alpha, \beta)$ into $n$ disjoint subintervals, and applying Corollary \ref{coro:signchanges} to each one.

\subsection{Conjectures for Maass cusp forms}
The techniques used in this paper are, by and large, not applicable to Maass cusp forms.  One should keep in mind that the restricted QUE problem is more difficult than the ``standard'' QUE conjecture, which in turn is  known, but with no rate of convergence (however, the generalized Lindel\"of hypothesis would give an optimal rate of convergence).  Nevertheless, it is natural to conjecture an analog of Theorem \ref{thm:QUE} for Maass forms (the case $x=0$ was conjectured in \cite{Young}).  Let $u_j$ be a Hecke-Maass cusp form, with Laplace eigenvalue $1/4 + T_j^2$.  
Suppose that $\lambda_{u_j}(m)$ is the $m$-th Hecke eigenvalue of $u_j$, and that $u_j(-x+iy) = \delta_j u_j(x+iy)$, where $\delta_j = \pm 1$.  Moreover, normalize $u_j$ so that $\int_{\Gamma \backslash \mh} |u_j(z)|^2 \frac{dx dy}{y^2}  = \vol(\Gamma \backslash \mh) = \frac{\pi}{3}$.
\begin{myconj}
\label{conj:QUEcuspCase}
 Let conditions be as in Theorem \ref{thm:QUE}.  Then as $T_j \rightarrow \infty$, we have
 \begin{equation}
 \label{eq:QUEcuspCase}
\int_0^{\infty} \psi^2(y) |u_j(x+iy)|^2 \frac{dy}{y} \sim
\int_0^{\infty} \psi^2(y) \Big[1 + \delta_j B_q(1,u_j) J_0\Big(\frac{\theta T_j}{y}\Big)\Big] \frac{dy}{y}.
 \end{equation}
 Here $B_q(s,u_j)$ is a finite Euler product given by \eqref{eq:BqDefCuspFormCase}.
 \end{myconj}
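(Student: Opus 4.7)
The plan is to mirror the proof strategy used for Theorem \ref{thm:QUE}, replacing the divisor-like Fourier coefficients $\sigma_{2iT}(n)/n^{iT}$ of $E_T$ by the Hecke eigenvalues $\lambda_{u_j}(n)$. First I would substitute the Fourier expansion
\[
u_j(x+iy) = \rho_j(1) \sqrt{y} \sum_{n \neq 0} \frac{\lambda_{u_j}(|n|)}{\sqrt{|n|}} K_{iT_j}(2\pi |n| y) e(nx),
\]
(with the sign symmetry encoded by $\delta_j$) into $\int_0^\infty \psi^2(y) |u_j(x+iy)|^2 \frac{dy}{y}$ and separate the resulting double sum into a diagonal $m=n$ and an off-diagonal $\ell = m-n \neq 0$ contribution. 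The diagonal contribution, after unfolding and using Rankin--Selberg together with the Hoffstein--Lockhart normalization for $\rho_j(1)$, should yield the constant main term $\int_0^\infty \psi^2(y)\, dy/y$, just as in the Eisenstein case.

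For the off-diagonal, I would fix $\ell \neq 0$, write $e(\ell x) = e(\ell a/(2q)) e(\ell \theta/2)$ using the Dirichlet approximation \eqref{eq:DirichletApproximation}, and group terms by $\ell \pmod{2q}$. The inner sum over $n$ with fixed $\ell$ is a shifted convolution sum of the form $\sum_n \lambda_{u_j}(n)\lambda_{u_j}(n+\ell)$ weighted by an integral of $K$-Bessel functions $\int_0^\infty \psi^2(y) K_{iT_j}(2\pi n y) K_{iT_j}(2\pi(n+\ell) y)\, dy/y$. Asymptotic analysis of this Bessel integral (using the standard uniform asymptotics, so that each $K$-Bessel concentrates around $2\pi n y \approx T_j$) shows that the relevant range is $n \asymp T_j/y$ and that the integral oscillates like $e(\ell \theta/2 \cdot \text{stationary phase})$, combining with $e(\ell \theta/2)$ to produce the $J_0(\theta T_j/y)$ factor. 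The arithmetic sum over $\ell \pmod{2q}$, after application of Hecke multiplicativity and the Ramanujan--Petersson bound, should assemble into the Euler product $B_q(1,u_j)$ defined by \eqref{eq:BqDefCuspFormCase}, with the $\delta_j$ arising from symmetry in $\ell \mapsto -\ell$.

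The principal obstacle, and the reason this remains a conjecture rather than a theorem, is the shifted convolution sum for Hecke eigenvalues. For the Eisenstein series, the coefficients factor as products of two Dirichlet series, so shifted convolution reduces (via Voronoi or an analytic continuation argument in a double Dirichlet series) to a problem accessible by current technology, uniformly in the shift. For cusp forms, by contrast, one needs nontrivial cancellation in $\sum_\ell \sum_n \lambda_{u_j}(n) \lambda_{u_j}(n+\ell)$ with $\ell$ ranging up to a power of $T_j$ and with the spectral parameter $T_j$ growing; this would require a spectral Kuznetsov-based treatment of the shifted convolution problem that is uniform both in the shift $\ell$ and, crucially, in $T_j$. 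Even in the case $x=0$ (no off-diagonal shift at all, only $\ell=0$ modulo symmetry), the conjecture reduces to a pointwise Lindel\"of-on-average statement for $L(1/2+it, u_j \times u_j)$ that is currently out of reach, which indicates where the main difficulty lies.

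Conditional on such a strong shifted convolution input (say, square-root cancellation on average over $\ell$ with polynomial dependence on $T_j$), the remainder of the argument -- stationary phase on the Bessel integral to generate $J_0(\theta T_j/y)$, and assembly of the local factors into $B_q(1,u_j)$ -- should proceed essentially as in the Eisenstein case, with the sign $\delta_j$ appearing naturally from the parity of $u_j$ when pairing $\ell$ with $-\ell$.
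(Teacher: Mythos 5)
Your broad outline (diagonal gives the constant, shifted convolutions are the hard part, $\delta_j$ reflects parity) captures some of the right intuition, but you have misidentified the source of the secondary main term $B_q(1,u_j)J_0(\theta T_j/y)$. In the paper's analysis, this term does \emph{not} come from genuine shifted convolution sums $\sum_n \lambda_{u_j}(n)\lambda_{u_j}(n+\ell)$; those are pure error terms, expected to be small. Rather, it arises from the \emph{opposite diagonal} $m=-n$ in the squared-out double sum. Because the Bessel factor $K_{iT_j}(2\pi|n|y)$ and the amplitude $|n|^{-1/2}$ depend only on $|n|$, the terms with $m=-n$ are exactly as large as the terms with $m=n$, and they produce a twisted second moment $\sum_n \lambda_{u_j}(n)^2\, e(\pm 2nx)$ rather than any shifted convolution. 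This is then handled by converting $e(\pm 2nx)\approx e(\pm an/q)e(\pm n\theta)$ into Dirichlet characters modulo $q$ (Lemma \ref{lemma:changeofbasis}); the principal character produces the Euler factor $B_q(1,u_j)$ via the identity $Z_q(s)=Z_1(s)B_q(s,u_j)$, and the $e(\pm n\theta)$ twist, after a stationary-phase evaluation of the Mellin transform $\widetilde J_{\pm\theta}$ of the archimedean weight, yields $J_0(\theta T_j/y)$. Your proposed mechanism — Hecke multiplicativity applied to $\sum_n \lambda(n)\lambda(n+\ell)$ summed over $\ell\pmod{2q}$ — does not actually produce $B_q$, because shifted convolutions of Hecke eigenvalues have no multiplicative structure to exploit in that way; and if you fix $\ell=m-n$ you are also mixing same-sign and opposite-sign pairs, which have entirely different archimedean behavior.

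Two further points of comparison. First, a key structural observation in the paper's heuristic that you do not address: for a cusp form the relevant local Dirichlet series $\mathcal L_d(s)=L(s,u_j\times u_j\otimes\chi)$-type object has only a \emph{simple} pole at $s=1$ (versus a double pole for the Eisenstein series, whose Fourier coefficients factor), so the residue computation uses $\widetilde J_{\pm\theta}(0)$ rather than $\widetilde J_{\pm\theta}'(0)$. This is what removes the $\log T$ factor present in Theorem \ref{thm:QUE} and leaves the clean ``$\sim$'' in \eqref{eq:QUEcuspCase}. Second, the genuine obstruction is correctly located in the shifted convolution problem (the paper emphasizes the lack of a quantitative QUE theorem, equivalently of a uniform-in-$T_j$ shifted convolution estimate), and also in the unavailability of the divisor-opening technique used for Proposition \ref{prop:FboundmediumDelta}; your remark about Lindel\"of-on-average for $L(1/2+it,u_j\times u_j)$ in the $x=0$ case is in the right spirit. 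But as written, your heuristic derivation of the secondary main term would fail, and a reader following your plan would not recover the correct $B_q(1,u_j)J_0(\theta T_j/y)$ structure.
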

Remarks.  We refrain from guessing an explicit error term, but we expect that the asymptotic is uniform in $x$.
We also extend a conjectural version of Corollary \ref{coro:signchanges}.  There is a small adjustment needed to cover odd Maass forms, which all vanish along the lines $x \in \mz$ and $x \in \frac12 + \mz$.  We therefore need to restrict $x$ to not lie within a distance $O(T^{-1} (\log T)^{\varepsilon})$ of one of these lines.

%
Suppose that $M$ is a compact Riemannian manifold with ergodic geodesic flow. In \cite{DZ} and \cite{TothZelditch}, it is proven that for a generic compact geodesic segment $\gamma \subset M$, 
\begin{equation}\label{eq}
\lim_{j \to \infty} \|u_j\|_{L^2(\gamma)}^2 =  \mathrm{length} (\gamma)
\end{equation}
holds for a density $1$ subsequence of eigenfunctions. Until now, the only known example where \eqref{eq} fails is when there exists a symmetry on the ambient manifold $M$ that fixes $\gamma$. In this case, approximately half of the eigenfunctions vanish identically on $\gamma$, and the other half are expected to satisfy
\begin{equation*}
\lim_{j \to \infty} \|u_j\|_{L^2(\gamma)}^2 =  2 \thinspace \mathrm{length} (\gamma).
\end{equation*}
Conjecture \ref{conj:QUEcuspCase} indicates that, for any rational value of $2x = \frac{a}{q}$ with $q > 1$, the limit $\lim_j \| u_j \|_{L^2(\gamma)}$ does not exist.  The reason for the lack of convergence is that the quantities $B_q(1,u_j)$ fluctuate with $u_j$.  Analogously to \eqref{eq:Bp1T}, if $q=p$ is prime then
\begin{equation}
 B_p(1,u_j) = \frac{1}{p+1} (1 +  \lambda_{u_j}(p) -p^{-1})
\end{equation}
The distribution of the values of $\lambda_{u_j}(p)$, with $p$ a fixed prime, was determined by Sarnak \cite{SarnakPlancherel}, and is given by the $p$-adic Plancherel measure.  As a consequence, for any interval $(a,b) \subset [-2,2]$, there exists a \emph{positive-density} subsequence of $u_j$'s with $\lambda_{u_j}(p) \in (a,b)$.  In fact, Sarnak proved a joint equidistribution statement for the values $\lambda_{u_j}(p_1), \dots, \lambda_{u_j}(p_k)$ for any finite set of primes $p_1, \dots, p_k$, which may be used to constrain the values of $B_q(1,u_j)$ for any $q$.

\subsection{Overview}
We gather some notation  from \cite{Young}.  
Let $E_T^*(z) = \frac{\theta(1/2 +
iT)}{|\theta(1/2 + iT)|} E_T(z)$, where $\theta(s) = \pi^{-s} \Gamma(s)
\zeta(2s)$, so $E_T^*$ is real-valued for $z \in \mh$.
The Fourier expansion of $E_T^*$ is
\begin{equation}
\label{eq:ETFourier}
 E_T^*(x+iy) = \mu y^{1/2 + iT} + \overline{\mu} y^{1/2-iT} + \rho^*(1) \sum_{n
\neq 0} \frac{\tau_{iT}(n) e(nx)}{|n|^{1/2}} V_T(2 \pi |n| y).
\end{equation}
Here $\rho^*(1) = (2/\pi)^{1/2} |\theta(1/2 + iT)|^{-1}$,
$\mu =
\frac{\theta(1/2 + iT)}{|\theta(1/2 + iT)|}$, $V_T(y) = \sqrt{y} K_{iT}(y)$, and $\tau_{iT}(n) = \sum_{ab = |n|} (a/b)^{iT}$.

There are two main differences in the generalization from $x=0$ (as in \cite{Young}) to arbitrary $x$.
The most important difference is that when $x=0$, the coefficients $e(nx) \tau_{iT}(n)$ of $E_T^*$ are multiplicative, while for general $x \in \mr$ they are not.  This causes some crucial aspects of \cite{Young} to break down.
 The other significant difference comes from an asymmetry between $n > 0$ and $n < 0$ in \eqref{eq:ETFourier}.  Note that the $K$-Bessel function in the Fourier expansion only depends on $|n|$, which has the following practical effect.   When one considers a $y$-integral of $|E_T^*(x+iy)|^2$, analyzed by squaring out and integrating the Fourier expansion term-by-term, there will be diagonal terms and off-diagonal terms.  The diagonal parts will include a sum with say $n_1 = n_2$ as well as terms with $n_1 = -n_2$, the latter of which will then lead to sums of the form $\sum_n |\tau_{iT}(n)|^2 e(2nx)$.  It is also necessary to treat the opposite sign off-diagonal terms of the form $\sum_{h \neq 0} \sum_{n} \tau_{iT}(n) \tau_{iT}(n+h) e((2n + h)x)$.
 It is difficult to estimate such sums because the weight is highly oscillatory (although, not for $x=0$, for instance). 

To continue the discussion, we describe the earliest stages of analysis of $I_{\psi}(x,T)$.  By a use of Parseval's formula (in the Mellin transform setting), $I_{\psi}(x,T)$ decomposes as
\begin{equation}
\label{eq:IpsiFitSquared}
 I_{\psi}(x,T) = \frac{1}{2 \pi} \intR |F(it)|^2 dt,
\end{equation}
where
\begin{equation}
\label{eq:Fdef}
 F(s) = F_{\psi, x, T}(s)  
 = \int_0^{\infty} \psi(y) y^s E_T^*(x+iy) \frac{dy}{y}.
\end{equation}
The size of $\text{Im}(s)$ is the most important basic parameter in the
analysis of $I_{\psi}(x,T)$.

We will then insert the Fourier expansion \eqref{eq:ETFourier} and calculate the $y$-integral in terms of gamma functions (see \eqref{eq:gammaVformula} below).  One arrives at an integral of the form
\begin{equation*}
|\zeta(1+2iT)|^{-2} \int_{-T}^{T} (1 + (T^2-t^2))^{-1/2} \Big| \sum_{n \neq 0} 
\frac{\tau_{iT}(n)
e(nx)}{|n|^{1/2 + it}} k(n,t,T) \Big|^2 dt,
\end{equation*}
as well as some other terms of lesser importance.  Here $k$ is an innocuous weight function, which restricts the support of $n$ to $|n| \asymp  (1 + (T^2- t^2))^{1/2}$.  The integral naturally decomposes into pieces of the form $T -  |t| \asymp \Delta$ where $1 \ll \Delta = o(T)$, as well as the ``bulk" range with $|t| \leq T(1-o(1))$.  The behavior of $F(it)$ depends strongly on the size of $\Delta$, and our method of proof breaks up into cases.  

The mean value theorem for Dirichlet polynomials (see Lemma \ref{lemma:MontgomeryVaughan} below) leads to a bound of $O(\log T)$ for \emph{each} dyadic segment $T\mp t \asymp \Delta$.  Since there are $O(\log T)$ such dyadic segments, this leads to the ``trivial'' bound $I_{\psi}(x,T) \ll \log^2 T$.

The main term in Theorem \ref{thm:QUE} comes from the bulk range, $|t| \leq T - o(T)$.  This main term arises, after squaring out, from the diagonal terms $m=n$ and the opposite diagonal terms, $m=-n$.  In the bulk range, the mean value theorem gives an upper bound of the same order of magnitude as the main term, which means that \emph{any} savings in the off-diagonal terms will be sufficient towards Theorem \ref{thm:QUE}.  
For this reason, we apply Holowinsky's sieve method for bounding shifted convolution sums with absolute values.  
This dodges any problems with estimating the shifted convolution sum $\sum_n \tau_{iT}(n) \tau_{iT}(n+h) e((2n+h)x)$ with oscillatory weight $e((2n+h)x)$, but still requires an asymptotic for $\sum_n |\tau_{iT}(n)|^2 e(2nx)$.
This approach leads to the following
\begin{myprop}
\label{prop:bulkrangeIntro}
 Let $\eta = \eta(T)$ satisfy $(\log T)^{-\delta} \ll \eta < \delta$, for some $0 < \delta < 2/3$.  Then
 \begin{multline}
  \int_{|t| \leq T - \eta(T) T} |F(it)|^2 \frac{dt}{2\pi} = \frac{3}{\pi} \log(1/4 + T^2) \int_0^{\infty} \psi^2(y) \Big[1 + B_q(1,T) J_0\Big(\frac{\theta T}{y}\Big)\Big] \frac{dy}{y} 
\\  
  + O(\eta^{-3/2} (\log T)^{8/9 + \varepsilon} + \eta^{1/2} \log T).
 \end{multline}
\end{myprop}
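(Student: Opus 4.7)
The plan is to begin from the Mellin--Parseval identity \eqref{eq:IpsiFitSquared}, insert the Fourier expansion \eqref{eq:ETFourier} into the definition \eqref{eq:Fdef} of $F(it)$, and compute the inner $y$-integral in closed form via gamma functions. After bounding the negligible contribution of the constant terms $\mu y^{1/2\pm iT}$, the function $F(it)$ is essentially $|\zeta(1+2iT)|^{-1}$ times a Dirichlet polynomial $\sum_{n\ne 0}\tau_{iT}(n)e(nx)|n|^{-1/2-it}k(n,t,T)$, where the kernel $k$ localizes $|n|\asymp(1+T^2-t^2)^{1/2}$. Squaring and integrating over the bulk range $|t|\le T-\eta T$ produces a double sum that I would split into the diagonal $(m=n)$, the opposite-diagonal $(m=-n)$, and the genuine off-diagonal $(m\ne\pm n)$ pieces.

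The diagonal sum is essentially $\sum_n|\tau_{iT}(n)|^2 n^{-1}$ weighted by $|k(n,t,T)|^2$; combined with the Rankin--Selberg identity and the known mean value of $|\tau_{iT}(n)|^2$, the subsequent $t$-integration yields the primary main term $\frac{3}{\pi}\log(1/4+T^2)\int\psi^2(y)\,dy/y$. The opposite-diagonal carries the extra factor $e(2nx)=e(na/q)e(n\theta)$; I would expand $e(na/q)$ via the orthogonality of Dirichlet characters modulo $q$, so that the character-average of $\sum_n|\tau_{iT}(n)|^2\chi(n)n^{-1-2it}$ factors as $|L(\tfrac12+iT,\chi)|^2$ times local correction Euler factors at primes dividing $q$, which upon summing over $\chi$ collapses into the finite Euler product $B_q(1,T)$ defined in \eqref{eq:BqDef}. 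The residual oscillation $e(n\theta)$ combines with the $t$-integration and the Mellin inversion against $\psi^2$ to reconstruct $J_0(\theta T/y)$ via the standard integral representation of the Bessel function.

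For the off-diagonal, writing $m-n=h\ne 0,-2n$ produces shifted-convolution sums $\sum_n\tau_{iT}(n)\tau_{iT}(n+h)e((2n+h)x)$ weighted by a smooth kernel in $n$ and $t$. Because the additive oscillation $e((2n+h)x)$ is irregular in $x$ and defeats uniform spectral treatment, I would discard it by the triangle inequality and apply Holowinsky's sieve, which bounds $\sum_{n\asymp N}\tau(n)\tau(n+h)$ by a divisor-type factor in $h$ with a saving of $(\log T)^{-1+\varepsilon}$ over the mean value bound. Summing over $h$ with the relevant smooth weight yields the off-diagonal contribution $O(\eta^{-3/2}(\log T)^{8/9+\varepsilon})$, where the factor $\eta^{-3/2}$ arises from the integrable singularity $(1+T^2-t^2)^{-1/2}$ accumulated near the boundary $|t|=T-\eta T$. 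The term $\eta^{1/2}\log T$ accounts for the piece of the diagonal main term truncated outside the bulk.

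The main obstacle is the opposite-diagonal analysis: extracting the precise Euler product $B_q(1,T)$ and the Bessel kernel $J_0(\theta T/y)$ requires a uniform treatment of the character sums modulo $q$ throughout the permitted range $T^{\delta}\ll q\ll T^{1/3-\delta}$, together with a careful interchange of the sum over $n$, the $t$-integration, and the Mellin inversion against $\psi^2$; the factor $B_q(1,T)$ and the kernel $J_0(\theta T/y)$ have to emerge simultaneously in a way that matches \eqref{eq:BqDef} and is stable under the truncations imposed by the bulk range. A secondary difficulty is to calibrate Holowinsky's sieve so that its $(\log T)^{8/9+\varepsilon}$ savings survive both the boundary factor $\eta^{-3/2}$ and the full range of shifts $h$ admitted by the smooth cutoff on $n$.
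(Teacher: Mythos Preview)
Your overall architecture matches the paper's: split into diagonal ($m=n$), opposite-diagonal ($m=-n$), and genuine off-diagonal, treat the last by Holowinsky's sieve with absolute values, and decompose $e(an/q)$ into Dirichlet characters for the opposite-diagonal. However, there are two substantive gaps.

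First, your treatment of the opposite-diagonal is incorrect in a crucial place. You write that the character sum ``upon summing over $\chi$ collapses into the finite Euler product $B_q(1,T)$''. In fact only the \emph{principal} character contributes to $B_q(1,T)$; the nonprincipal characters do not collapse into anything---they must be shown to be negligible. The paper does this by writing $\mathcal{L}_d(s)=\sum_n |\tau_{iT}(dn)|^2\chi(n)n^{-s}=L(s,\chi)^2 L(s+2iT,\chi)L(s-2iT,\chi)A_d(s)$ (not $|L(\tfrac12+iT,\chi)|^2$ as you wrote), shifting the contour to $\mathrm{Re}(s)=\tfrac12+\varepsilon$, and invoking the Jutila--Motohashi fourth-moment bound (Proposition~\ref{prop:JM}) over $\chi\pmod{q/d}$ to show these terms are $O(T^{-\delta'})$ for $Q$ in the stated range. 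This $L$-function input is essential and your proposal omits it. Also note there is no factor $n^{-2it}$ in the opposite-diagonal: since $|m|=|n|$ the $t$-integrand $J(n,n)$ carries no Dirichlet-polynomial oscillation, and the relevant Dirichlet series is evaluated near $s=1$, not on the critical line.

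Second, your claim that Holowinsky's sieve yields ``a saving of $(\log T)^{-1+\varepsilon}$'' producing the exponent $8/9$ elides the mechanism. Holowinsky's theorem gives a bound in terms of $P_{iT}(x)=(\log x)^{-1}\prod_{p\le x}(1+|\tau_{iT}(p)|/p)$, and the paper extracts the specific exponent via the pointwise inequality $|x|\le\frac{1}{18}(8+11|x|^2-|x|^4)$ applied to $x=\tau_{iT}(p)$, which converts $P_{iT}(T)$ into powers of $|\zeta(1+2iT)|$ and $|\zeta(1+4iT)|$. One then needs the nontrivial estimate $|\zeta(1+it)|^{-2}|\zeta(1+2it)|^{-1}\ll(\log t)^{1+\varepsilon}$ (Lemma~\ref{lemma:zetalemma}), proved via the identity $\frac32+2\cos\theta+\cos 2\theta=\frac12(1+2\cos\theta)^2\ge 0$, to arrive at $(\log T)^{8/9+\varepsilon}$. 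A naive Vinogradov--Korobov bound on each zeta factor would give an exponent exceeding $1$ and destroy the argument, so this step is not routine and should be flagged.
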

Proposition \ref{prop:bulkrangeIntro} implies a sharp \emph{lower bound} on $I_{\psi}(x,T)$,  using the positivity in \eqref{eq:IpsiFitSquared}.

Holowinsky's method of bounding the off-diagonal terms with absolute values is able to save, at best, a small fractional power of $\log T$.  Since the trivial bound on $I_{\psi}(x,T)$ is $O(\log^2 T)$, while the main term is of size $\log T$, this method unfortunately cannot be used to completely prove Theorem \ref{thm:QUE} (since one needs to save an entire factor of $\log T$).  
This is a qualitative difference between QUE and restricted QUE.

Having accounted for the main term with Proposition \ref{prop:bulkrangeIntro}, it suffices to \emph{bound from above} the contribution from the remaining ranges of $t$.  For this reason, we can apply the simple inequality $|\sum_{n \neq 0} a_n|^2 \leq 2 |\sum_{n > 0} a_n|^2 + 2 |\sum_{n < 0} a_n|^2$ to $|F(it)|^2$.  This has the significant effect of avoiding all the difficulties with the terms with opposite sign.  The problem then reduces to understanding $\sum_{n} \tau_{iT}(n) \tau_{iT}(n+h)$ which in certain ranges we can quote earlier work \cite{Young}.  This leads to the following
\begin{myprop}
\label{prop:biggisht}
Suppose that $\delta > \eta(T) \gg (\log T)^{-\delta}$ for some fixed small  $\delta > 0$, and
\begin{equation}
T^{\frac{25}{27} + \varepsilon} \ll \Delta \leq \eta(T) T.
\end{equation}
Then for some $\delta' > 0$, we have 
\begin{equation}
\label{eq:biggisht}
 \int_{\frac12 \Delta \leq T- |t| \leq \Delta} |F(it)|^2 dt \ll \frac{\Delta^{1/2}}{T^{1/2}} \log
T + T^{-\delta'} + S_{\Delta},
\end{equation}
where $S_{\Delta}$ is a nonnegative quantity satisfying
\begin{equation}
\label{eq:SDeltaProperty}
 \sum_{\substack{1 \ll \Delta \ll T \\ \text{dyadic}}} S_{\Delta} \ll 1.
\end{equation}
\end{myprop}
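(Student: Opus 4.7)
My plan is to exploit the sign-splitting trick anticipated in the overview. Decomposing the Fourier expansion \eqref{eq:ETFourier} as $E_T^*(x+iy) = E_0(z) + E_+(z) + E_-(z)$, with $E_\pm$ collecting the terms with $\pm n > 0$ and $E_0$ the constant terms $\mu y^{1/2+iT} + \bar\mu y^{1/2-iT}$, I obtain $F(it) = F_0(it) + F_+(it) + F_-(it)$. In the range $T - |t| \asymp \Delta \gg T^\varepsilon$, the $F_0$ contribution is negligible by repeated integration by parts against the smooth $\psi$. Using $|F|^2 \leq 3(|F_+|^2 + |F_-|^2 + |F_0|^2)$ reduces the task to bounding each of $\int_{\frac12 \Delta \leq T-|t| \leq \Delta} |F_\pm(it)|^2\, dt$, and by the symmetry $n \leftrightarrow -n$ it suffices to treat $F_+$.

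Applying the Mellin transform formula \eqref{eq:gammaVformula} and asymptotically analyzing the resulting gamma factors localizes $F_+(it)$ to a Dirichlet-polynomial-like sum of length $N \asymp (T\Delta)^{1/2}$ with coefficients $\tau_{iT}(n) e(nx) n^{-1/2}$ times a smooth weight $w(n,t,T)$. Squaring and integrating over the dyadic window in $t$ produces a diagonal contribution from $m=n$ that, together with the $|\zeta(1+2iT)|^{-2}$ normalization, is bounded by the standard Rankin--Selberg mean square for $|\tau_{iT}(n)|^2$ and yields the leading term $\ll \Delta^{1/2} T^{-1/2} \log T$. The off-diagonal shifts $m=n+h$, $h \neq 0$, produce
\begin{equation*}
\sum_{h \neq 0} e(-hx) \sum_{n} \tau_{iT}(n)\, \tau_{iT}(n+h)\, W(n,h,\Delta,T).
\end{equation*}
The crucial effect of the sign-splitting is that the phase $e(-hx)$ depends only on $h$, not on the inner summation variable $n$, so it can be extracted as a factor of modulus one.

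With the phase extracted, the shifted convolution sums are precisely those analyzed in \cite{Young} for the case $x = 0$, and the hypothesis $\Delta \gg T^{25/27 + \varepsilon}$ is exactly the range in which \cite{Young} delivers a saving. Invoking those bounds produces a power-saving component $\ll T^{-\delta'}$ plus a nonnegative residual, which I would define to be $S_\Delta$; the aggregate estimate \eqref{eq:SDeltaProperty} then follows from the analogous dyadic bound in \cite{Young}, since the output of the shifted-convolution machinery telescopes over $\Delta$ through the weight $W$. The main obstacle is verifying that $W$, produced by the $t$-integration here, matches closely enough the weight used in \cite{Young} that the stationary-phase analysis of $K_{iT}$ and the delta-symbol treatment of shifted convolutions apply without modification; the hypothesis $\Delta \leq \eta T$ keeps $N$ safely below $T$ and makes the stationary-phase approximation valid uniformly in $x$. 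Because all the $x$-dependence is encapsulated in the unit factor $e(-hx)$, the estimates from \cite{Young} transfer mechanically to arbitrary $x$.
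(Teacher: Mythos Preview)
Your outline matches the paper's approach closely up through the shifted convolution step: the sign-splitting, the reduction via Stirling to a Dirichlet polynomial of length $N \asymp (\Delta T)^{1/2}$, the diagonal bound via Rankin--Selberg, and the invocation of the shifted divisor result (Proposition~\ref{prop:SCS}) for the off-diagonal terms are all exactly what the paper does. The error term \eqref{eq:ETsummedoverm} does transfer mechanically, since there one sums $|E.T.|$ over $h$ and the phase $e(hx)$ disappears.

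The genuine gap is in your handling of $S_\Delta$. The residual you would call $S_\Delta$ arises from the \emph{main term} \eqref{eq:MTdef} of the shifted convolution, summed over $h$ with the weight $e(hx)$ still present. After simplification this becomes (up to harmless factors) the quantity $MT_{OD}$ in \eqref{eq:MTODdef}, essentially
\[
\Delta \sum_{h \neq 0} \sigma_{-1}(h)\, e(hx)\, \widehat{w_1}\!\Big(\frac{-\Delta h}{2\pi N y}\Big).
\]
There is no ``analogous dyadic bound in \cite{Young}'' for this sum: \cite{Young} treats only $x=0$, and the paper explicitly remarks that these main terms are ``surprisingly difficult to estimate with uniformity in $x$.'' The trivial bound $|e(hx)| \leq 1$ only gives $S_\Delta \ll 1$ for each $\Delta$, which loses a factor of $\log T$ when summed dyadically and would destroy the proof. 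What is actually needed is Lemma~\ref{lemma:QxH}: writing $\sigma_{-1}(h) = \sum_{ab=h} a^{-1}$, applying Poisson in $b$, and observing that for each $a$ the resulting sum over $\nu$ is nonzero for only $O(1)$ dyadic values of $H = N/\Delta$ (this uses crucially that $w_1$ is supported away from zero). This lemma, and not anything in \cite{Young}, is what delivers \eqref{eq:SDeltaProperty}.
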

 Here $S_{\Delta}$ arises as an off-diagonal main term that is surprisingly difficult to estimate.  
 Note that Proposition \ref{prop:biggisht} implies
 \begin{equation}
 \label{eq:RandomSum}
  \int_{T^{25/27+\varepsilon} \ll T-|t| \ll \eta T} |F(it)|^2 dt \ll \eta^{1/2} \log T + 1.
 \end{equation}
 For a fixed $\Delta$, the trivial bound is $S_{\Delta} \ll 1$, so the bound \eqref{eq:SDeltaProperty} amounts to a saving of $\log T$ \emph{on average} over the dyadic segments, since there are $O(\log T)$ such dyadic segments.  
One sees that the bound \eqref{eq:SDeltaProperty} is crucial, since the individual bound $S_{\Delta} \ll 1$ would only lead to a bound of $\log T$ in \eqref{eq:RandomSum}.

When $\Delta$ is somewhat smaller than $T$, which means that $N = \sqrt{\Delta T}$ is somewhat larger than $\Delta$,
the problem may be interpreted as estimating the mean value of a ``long'' Dirichlet polynomial.  The shifted divisor sum approach from \cite{Young} breaks down for $\Delta \ll T^{25/27}$, and different techniques are needed to cover all the sizes of $\Delta$.

An approach to the problem is to open up the divisor function $\tau_{iT}(n) = \sum_{ab=n} (a/b)^{iT}$, and use the sums over $a$ and $b$ asymmetrically.
Arguments in this vein appear in Section \ref{section:smallmediumDelta}.
By a dyadic partition of unity, and Cauchy's inequality, we have
\begin{equation}
\label{eq:Section1.2Formula}
\int_{T - |t| \asymp \Delta} \Big| \sum_{n \asymp N} \frac{\tau_{iT}(n) e(nx)}{n^{1/2+it}} \Big|^2 \ll \log T \sum_{B \text{ dyadic}} \int_{T-|t| \asymp \Delta} \Big| \sum_{a \asymp A} \sum_{b \asymp B} \frac{ e(abx)}{a^{1/2+it-iT} b^{1/2+it+iT}} \Big|^2,
\end{equation}
where $AB = N$ (observe that once $b$ is localized by $b \asymp B$, then automatically $a \asymp N/B$). 
We use a variety of methods depending on the sizes of $A$ and $B$.

One fruitful option is to apply Poisson summation in either $a$ or $b$, prior to any analysis of the $t$-integral.
It turns out that if $B \gg T^{1/2+\varepsilon}$, then one can get a savings from Poisson summation on $b$ inside the square (and prior to any $t$-integration), which can then be combined with savings from the $t$-integral.  See Lemma \ref{lemma:bigB} for the result obtained by this idea.
Similarly, if $A \gg \Delta^{1/2+\varepsilon}$, then there is savings from Poisson on the $a$-sum, for which see Lemma \ref{lemma:bigA}.
Since $AB = N = (\Delta T)^{1/2}$, this means $A = \Delta^{1/2+o(1)}$ and $B = T^{1/2+o(1)}$ is the main region of interest.  

Another approach is to open the square and evaluate the $t$-integral, leading to a shifted convolution sum of the form
\begin{equation}
\label{eq:IntroShiftedSumFormula}
\Delta
\sum_{|h| \ll \frac{AB}{\Delta}} e(hx)
\sum_{a_1 b_1 - a_2 b_2 = h} \frac{  1}{(a_1 a_2 b_1 b_2)^{1/2} (b_1/b_2)^{2iT}}.
\end{equation}
Duke, Friedlander, and Iwaniec \cite{DFI} \cite{DFIbilinear} have studied general sums of this form, and obtained satisfactory results in the case that $A \gg B^{1-\delta}$ for some (small) $\delta > 0$ (note also there is recent work of Bettin and Chandee \cite{BettinChandee} that can improve the size of $\delta$).  Based on the locations of $A$ and $B$ established from the previous paragraph, we see that this is satisfactory only for $\Delta \gg T^{1-\delta'}$ for some (small) $\delta' > 0$, yet we require all $\Delta \ll T^{25/27+\varepsilon}$.  The first step in the method of \cite{DFI} is to solve the equation $a_1 b_1 - a_2 b_2 = h$ by interpeting it as a congruence $a_1 \equiv - h \overline{b_1} \pmod{b_2}$, and then applying Poisson summation in $a_1 \pmod{b_1}$.  Since $B$ is larger than $A$, one obvious approach is to reverse the roles of the $a_i$ and the $b_i$ in the \cite{DFI} approach.  The problem with this is that the oscillatory factor $(b_1/b_2)^{-2iT}$ causes a loss of efficiency in Poisson summation (the trivial bound on the dual sum is worse than the trivial bound on the original one).  
There is a way to partially circumvent the problem arising from this oscillation, which is to note that the equation $a_1 b_1 - a_2 b_2 = h$ means that $b_1/b_2$ is approximately $a_2/a_1$.  Precisely,
\begin{equation}
\label{eq:OscillationSwitching}
 (b_1/b_2)^{2iT} = (a_2/a_1)^{2iT} \Big(1+\frac{h}{a_2 b_2}\Big)^{2iT}.
\end{equation}
In terms of $b_2$, this is much less oscillatory than before, since
\begin{equation*}
 \Big(1+\frac{h}{a_2 b_2}\Big)^{2iT} \approx \exp\Big(\frac{2iT h}{a_2 b_2} \Big), \quad \text{and} \quad \frac{Th}{a_2 b_2} \ll \frac{T}{\Delta}.
\end{equation*}
After these initial algebraic manipulations, we may then solve the equation $a_1 b_1 - a_2 b_2 = h$ by $b_2 \equiv - \overline{a_2} h \pmod{a_1}$.  It is better to interpet this equation as a congruence modulo $a_1$ instead of $b_1$, because $A$ is smaller than $B$, and moreover, $b_1$ was already eliminated from the right hand side of \eqref{eq:OscillationSwitching}.  The Poisson summation argument leads to a dual sum which when bounded trivially is just barely not satisfactory (at least, in the relevant ranges $A = \Delta^{1/2+o(1)}$, $B= T^{1/2+o(1)}$).  However, the gain is that the dual sum has a phase (still of rough magnitude $\frac{Th}{AB}$, but no longer linear in $h$).  We may thus obtain some cancellation in the $h$-sum using classical bounds for exponential sums.  See Lemma \ref{lemma:ShiftedConvolutionWithBsummed} for the result using this idea.

There is also a technical problem to treat small values of $b$.  Heuristically, if $b$ is large then there should be significant cancellation from the $b$-sum appearing in \eqref{eq:Section1.2Formula}, but for $b$ small this may not be true.  It turns out that these terms lead to the same type of off-diagonal main term that was discussed earlier in the context of the shifted convolution problem with $\Delta \gg T^{25/27+\varepsilon}$.  Our approach to these terms is to solve the shifted convolution sum in \eqref{eq:IntroShiftedSumFormula} by Poisson summation in $a$ modulo $b_1$, as in \cite{DFI}, for which see Lemma \ref{lemma:smallB}.

Taken together, the above ideas lead to the following
\begin{myprop}
\label{prop:FboundmediumDelta}
Fix $0 < \delta < 1/4$.  Then
\begin{equation}
\int_{T^{\delta} \ll T-|t| \ll T^{1-\delta}} |F(it)|^2 dt \ll_{\delta} 1.
\end{equation}
\end{myprop}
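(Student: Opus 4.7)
The plan is, for each dyadic scale $\Delta$ with $T^\delta \ll \Delta \ll T^{1-\delta}$, to bound $\int_{T-|t|\asymp\Delta}|F(it)|^2\,dt$ by a small negative power of $T$ and then sum over the $O(\log T)$ dyadic $\Delta$. By the analysis already set up in the Overview, on such a slab the function $F(it)$ is essentially $|\zeta(1+2iT)|^{-1}\Delta^{-1/4}$ times a Dirichlet polynomial of length $N\asymp(\Delta T)^{1/2}$ in $\sum_{n\asymp N}\tau_{iT}(n)e(nx)n^{-1/2-it}$. Since we only need an upper bound (no main term), I first apply the sign-separation inequality $|\sum_{n\ne 0}a_n|^2\le 2|\sum_{n>0}a_n|^2+2|\sum_{n<0}a_n|^2$ to kill the interaction between $n$ and $-n$. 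Opening the divisor function $\tau_{iT}(n)=\sum_{ab=n}(a/b)^{iT}$ and dyadically decomposing $b\asymp B$ (so $a\asymp A$ with $AB\asymp N$), Cauchy's inequality as in \eqref{eq:Section1.2Formula} loses only $\log T$, so it suffices to bound each
\begin{equation*}
I(A,B,\Delta):=\int_{T-|t|\asymp\Delta}\Bigl|\sum_{a\asymp A}\sum_{b\asymp B}\frac{e(\pm abx)}{a^{1/2+i(t-T)}b^{1/2+i(t+T)}}\Bigr|^2\,dt
\end{equation*}
by $\ll T^{-c(\delta)}$, uniformly in the dyadic parameters.

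The constraint $AB\asymp(\Delta T)^{1/2}$ together with $T^\delta\ll\Delta\ll T^{1-\delta}$ confines $(A,B)$ to a rectangle separated from the extremes, and I cover it by four overlapping regions, in each of which an already-established lemma of the paper applies. First, if $B\gg T^{1/2+\varepsilon}$ I quote Lemma \ref{lemma:bigB}, whose proof applies Poisson summation in $b$ inside the square and combines the dual-sum gain with the $t$-integral. Symmetrically, if $A\gg\Delta^{1/2+\varepsilon}$ I invoke Lemma \ref{lemma:bigA}, using Poisson in $a$. In the complementary ``main'' square $A\asymp\Delta^{1/2+o(1)}$, $B\asymp T^{1/2+o(1)}$, I split on the size of $b$: for very small $b$, where the $b$-sum has no internal cancellation and would otherwise produce an off-diagonal main term, I apply Lemma \ref{lemma:smallB}, which solves the shifted equation $a_1b_1-a_2b_2=h$ via the congruence $a_1\equiv -h\overline{b_1}\pmod{b_2}$ and Poisson summation in $a_1$ modulo $b_1$ in the Duke--Friedlander--Iwaniec style. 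For the remaining medium-$B$ part of the main square, I use Lemma \ref{lemma:ShiftedConvolutionWithBsummed}, whose key manoeuvre is the identity
\begin{equation*}
(b_1/b_2)^{2iT}=(a_2/a_1)^{2iT}\bigl(1+\tfrac{h}{a_2b_2}\bigr)^{2iT}
\end{equation*}
used to transfer the $T$-oscillation from the long $b$-variables to the short $a$-variables, followed by Poisson modulo $a_1$ and a Weil-type bound to extract cancellation in the $h$-sum.

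Granting those four lemmas, each $I(A,B,\Delta)$ is $\ll T^{-c(\delta)}$ for some $c(\delta)>0$, so summing over the $O(\log^2 T)$ dyadic choices of $\Delta$ (in $[T^\delta,T^{1-\delta}]$) and $B$ (with $1\le B\le N$) leaves $\ll_\delta 1$, which is the claimed bound. The main obstacle I expect is organizing the four regions so that their union covers every admissible $(A,B)$ with a \emph{uniform} power saving. In particular, one must check that the off-diagonal ``main-term'' contributions produced by Lemma \ref{lemma:smallB}, analogous to the $S_\Delta$ appearing in Proposition \ref{prop:biggisht}, are genuinely power-saving throughout the range $\Delta\le T^{1-\delta}$ rather than only on average over $\Delta$, since here we cannot afford the $\log T$ that an $S_\Delta$-type estimate gives when applied individually. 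A secondary annoyance is ensuring that the transition thresholds $B\asymp T^{1/2+\varepsilon}$ and $A\asymp\Delta^{1/2+\varepsilon}$ between the four lemmas admit small $\varepsilon$-overlaps consistent with the hypothesis $\Delta\ll T^{1-\delta}$, which is precisely why the restriction $\delta>0$ on \emph{both} sides of $T-|t|$ is needed.
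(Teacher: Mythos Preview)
Your overall architecture---dyadic decomposition in $\Delta$, opening $\tau_{iT}$, dyadic localization in $B$, and covering the $(A,B)$-rectangle with Lemmas \ref{lemma:bigB}, \ref{lemma:bigA}, \ref{lemma:ShiftedConvolutionWithBsummed}, \ref{lemma:smallB}---is exactly the paper's route. The gap is in the bookkeeping of what those lemmas actually give, and you have in fact put your finger on it in your final paragraph.

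The assertion that ``each $I(A,B,\Delta)\ll T^{-c(\delta)}$'' is false as stated. Lemmas \ref{lemma:bigB} and \ref{lemma:ShiftedConvolutionWithBsummed} do yield genuine $T$-power savings in their ranges, but Lemmas \ref{lemma:bigA} and \ref{lemma:smallB} only give $M|\zeta(1+2iT)|^2(\log T)^{-50}$ plus, in the case of Lemma \ref{lemma:smallB}, an additional $M|\zeta(1+2iT)|^2 S_\Delta$. After dividing by $N|\zeta(1+2iT)|^2$ this is $(\log T)^{-50}+S_\Delta$ per dyadic $\Delta$, not $T^{-c}$. Summing $(\log T)^{-50}$ over $O(\log T)$ dyadic values of $\Delta$ is fine; the $S_\Delta$ term is handled not by any individual power saving (there is none) but by the averaging estimate \eqref{eq:SDeltaProperty}, i.e.\ $\sum_{\Delta\text{ dyadic}} S_\Delta\ll 1$, which is precisely Lemma \ref{lemma:QxH}. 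So the obstacle you flag is real, and its resolution is the opposite of what you propose: one accepts the $S_\Delta$ and sums it.

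This forces one further refinement you have not made. Your Cauchy step \eqref{eq:Section1.2Formula} loses a factor $\log T$ uniformly across all dyadic $B$, including the small-$b$ range that produces $S_\Delta$; that would leave you with $(\log T)\sum_\Delta S_\Delta\ll \log T$, which is too much. The paper's Lemma \ref{lemma:IDeltaDyadicDecomposition} avoids this by first splitting off the entire range $b\ll B_0$ into a single piece $I_{B_0}$ \emph{before} applying Cauchy over the remaining dyadic $B$'s, so that no $\log T$ is attached to $I_{B_0}$ and hence none to $S_\Delta$. (Minor points: in Lemma \ref{lemma:smallB} the Poisson is in $a_2\pmod{b_1}$, and in Lemma \ref{lemma:ShiftedConvolutionWithBsummed} the $h$-sum is bounded by Weyl/van der Corput, not Weil.)
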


For technical reasons, the very smallest values of $|T \mp t|$ require a slightly different proof, and we claim this estimate with
\begin{myprop}
\label{prop:verysmallDelta}
We have
\begin{equation}
\int_{|T \mp t| \ll T^{1/100}} |F(it)|^2 dt \ll_{\varepsilon} (\log T)^{\varepsilon}.
\end{equation}
\end{myprop}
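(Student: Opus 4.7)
My plan is to parallel the proof of Proposition \ref{prop:FboundmediumDelta}, adapting it to the extreme regime $|T\mp t|\ll T^{1/100}$, which lies below the range $T^{\delta}\ll T-|t|\ll T^{1-\delta}$ covered there. The first step would be to partition the integration range dyadically into pieces $|T-|t||\asymp \Delta$ for $\Delta$ running over dyadic values in $[1,T^{1/100}]$, plus a bounded core $|T-|t||\leq 1$. On each piece, the Fourier expansion \eqref{eq:ETFourier} splits $F(it)$ into a constant-term part $\mu\tilde\psi(1/2+i(t+T))+\bar\mu\tilde\psi(1/2+i(t-T))$, which is $O(1)$ pointwise and contributes $O(1)$ to the integral thanks to the rapid decay of $\tilde\psi$ (one of the two Mellin transforms has imaginary argument of order $2T$ and is exponentially suppressed), together with a cuspidal sum that requires more delicate treatment.

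For the cuspidal sum on each dyadic piece with $\Delta \geq 1$, I would open the divisor function $\tau_{iT}(n)=\sum_{ab=n}(a/b)^{iT}$ and dyadically partition $a\asymp A$, $b\asymp B$ with $AB\asymp N=(T\Delta)^{1/2}$, as in \eqref{eq:Section1.2Formula}. Since $N\ll T^{1/2+1/200}$ in our range, one may invoke the four lemmas used in the proof of Proposition \ref{prop:FboundmediumDelta} in the appropriate $(A,B)$ sub-ranges: Lemma \ref{lemma:bigB} when $B\gg T^{1/2+\varepsilon}$, Lemma \ref{lemma:bigA} when $A\gg \Delta^{1/2+\varepsilon}$, Lemma \ref{lemma:ShiftedConvolutionWithBsummed} in the critical regime $A\asymp \Delta^{1/2+o(1)}$, $B\asymp T^{1/2+o(1)}$, and Lemma \ref{lemma:smallB} when $B$ is small. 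For the core $|T-|t||\leq 1$, a direct Mellin--Barnes analysis of $G_n(t)=\int\psi(y)y^{it-1}V_T(2\pi|n|y)\,dy$, using Stirling asymptotics for the gamma factors in $\int V_T(u)u^{s-1}\,du=2^{s-3/2}\Gamma(\tfrac{s+1/2+iT}{2})\Gamma(\tfrac{s+1/2-iT}{2})$ combined with the rapid decay of $\tilde\psi(s-it)$, yields a bound that integrates to $O(1)$.

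The main obstacle is that the four lemmas above are stated under hypotheses calibrated to the range $T^{\delta}\ll T-|t|\ll T^{1-\delta}$, whereas here we require them down to $\Delta\ll T^{1/100}$. In particular, Lemma \ref{lemma:ShiftedConvolutionWithBsummed} exploits the oscillatory factor $(1+h/(a_2b_2))^{2iT}$ whose phase $Th/(a_2b_2)\ll T/\Delta$ becomes very large as $\Delta$ decreases, so one must verify that the classical exponential-sum bound on the resulting $h$-sum, as well as the trivial estimates on the Poisson dual sums, remain effective uniformly in $\Delta$ down to $\Delta\asymp 1$. Carrying out this error-term accounting should show that each dyadic piece contributes a quantity of size $o(1)$, so that summation over the $O(\log T)$ dyadic pieces produces the claimed $(\log T)^{\varepsilon}$ bound. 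The ``technical reasons'' alluded to in the paper's overview presumably refer precisely to this careful bookkeeping rather than to any fundamentally different technique.
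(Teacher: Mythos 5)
Your proposal correctly identifies the shape of the problem and several of the ingredients (dyadic decomposition in $\Delta$, opening the divisor function, reusing Section~\ref{section:smallmediumDelta} where possible), and you correctly flag the danger that the lemmas are calibrated to $T^{\delta}\ll\Delta$. However, the concern you raise is not a matter of ``careful bookkeeping'' — it is a genuine failure. Lemma~\ref{lemma:ShiftedConvolutionWithBsummed} contains the error term $M/\Delta$ in \eqref{eq:IABboundFromShiftedConvolutionWithBsummed}: with $M\asymp\sqrt{\Delta T}$ and $\Delta\asymp 1$, this is $\asymp\sqrt{T}\asymp M$, giving no savings whatsoever over the trivial bound. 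That term arises from Taylor-expanding the weights inside the shifted-convolution analysis under the assumption $h/M\ll 1/\Delta$ is small, which is simply false for bounded $\Delta$. Lemma~\ref{lemma:bigA} has the same structural obstruction (its hypothesis $\Delta\gg(\log T)^{100}$ is needed, not cosmetic). So the critical $(A,B)\approx(\Delta^{1/2+o(1)},T^{1/2+o(1)})$ regime, which you propose to treat via Lemma~\ref{lemma:ShiftedConvolutionWithBsummed}, cannot be salvaged by re-examining constants; it needs a replacement argument. The paper's replacement is a classical Van der Corput exponential-sum estimate (quoted from Graham--Kolesnik, Theorem 2.9 with $q=3$, $Q=8$) applied directly to the inner sum $\sum_{b\asymp B}e(abx)b^{-1/2-2iT+it}$ for all $B\gg T^{13/56}$, which gives a clean power saving $I_{A,B}\ll MT^{-1/1400}$ uniformly in small $\Delta$. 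This is a fundamentally different technique, not an accounting refinement.

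There is a second gap. Your plan goes straight from the Fourier expansion to a Dirichlet-polynomial mean value, implicitly assuming the passage $F\approx F_0$ that Lemma~\ref{lemma:FitF0approximation} provides. But the paper opens Section~\ref{section:verysmallDelta} by noting precisely that this passage breaks down here: for $|t-T|$ bounded, one of the two gamma factors in $\gamma_{V_T}(1/2+it+u)$ has argument of bounded modulus, so Stirling's asymptotic is unavailable and there is no clean $\psi(\sqrt{T^2-t^2}/(2\pi|n|))$ cutoff. The paper's Lemma~\ref{lemma:QWERTYbound} is the substitute: it works directly with the Mellin--Barnes representation $k(n,t,T)$, uses the estimate $\int_{(\sigma)}|\widetilde\psi(-v)|\cosh(\pi T)|\gamma_{V_T}(1/2+it+v)|^2dv\ll(1+|t-T|)^{\sigma-1/2}(1+|t+T|)^{\sigma-1/2}$ with different $\sigma$'s on three $n$-ranges, and, combined with the mean-value theorem, reduces to the narrow window $N_0^-=\sqrt{\Delta T}/(\log T)^2\ll n\ll N_0^+=\sqrt{\Delta T}(\log T)^{\varepsilon}$, with error $O(|\zeta(1+2iT)|^2(\log T)^{-2})$. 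This narrowing is also what ensures only $O(\log\log T)$ dyadic $M$-ranges survive rather than $O(\log T)$, which matters for the final $(\log T)^{\varepsilon}$ bound. Finally, Lemma~\ref{lemma:smallB} is indeed reused, but the paper is careful to distinguish two regimes: for $\Delta\gg(\log T)^{100}$ the $S_\Delta$ term enjoys the summable bound \eqref{eq:SDeltaProperty}, while for $\Delta\ll(\log T)^{100}$ everything is lumped into a single range $|t-T|\ll(\log T)^{100}$ where one may only claim $S_\Delta\ll 1$ (this is the content of the remark after Lemma~\ref{lemma:smallB}, since $w_1$ no longer has support on the positive reals) — acceptable because there are $O((\log T)^{\varepsilon})$ such instances. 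Your proposal does not address any of these points.
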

 
 Propositions \ref{prop:bulkrangeIntro}, \ref{prop:biggisht}, \ref{prop:FboundmediumDelta}, and \ref{prop:verysmallDelta}, along with an easy fact that the contribution from $|t| \geq T + T^{1/100}$ is very small (for which see Section \ref{section:initialdevelopments}), altogether imply Theorem \ref{thm:QUE}, taking $\eta = (\log T)^{-1/18}$.  The proof of Proposition \ref{prop:bulkrangeIntro} appears in Sections \ref{section:largeDelta} and \ref{section:spectralapproach}.  The main part of this paper is in proving Proposition \ref{prop:FboundmediumDelta}, which appears in Section \ref{section:smallmediumDelta}.  Finally, we show Proposition \ref{prop:verysmallDelta} in Section \ref{section:verysmallDelta}.

 The proof of Corollary \ref{coro:signchanges} appears in Section \ref{section:shiftedQUE}, and discussion of Conjecture \ref{conj:QUEcuspCase} occurs in Section \ref{section:conjecture}.

\subsection{Remarks on Conjecture \ref{conj:QUEcuspCase}}
 The outline of Theorem \ref{thm:QUE} given in the previous subsection does not apply to Conjecture \ref{conj:QUEcuspCase}, for a number of reasons.  One important reason is that the proofs of Propositions \ref{prop:bulkrangeIntro} and \ref{prop:biggisht} rely on a solution of the shifted convolution problem which  is intertwined with the QUE problem on $\Gamma \backslash \mh$ (see \cite[Appendix A]{GRS} for more discussion).  We currently do not have a quantitative QUE theorem (e.g. with power-saving error term).  Another problem is that the proof of Proposition \ref{prop:FboundmediumDelta} opens the divisor function as the Dirichlet convolution of smooth functions,   which is not available for general Maass cusp forms.

\subsection{Acknowledgements}
I thank Andrew Booker, Junehyuk Jung, Eren Mehmet K\i ral, and K. Soundararajan for  valuable input on this work, and Peter Sarnak for a question that motivated this paper. 
I also thank the referee for valuable comments and corrections.
Some of this work was performed while the author was a member at the Institute for Advanced Study in Fall 2014.

\section{Notation and initial developments}
\subsection{Notation}
Recall $V_T(y) = \sqrt{y} K_{iT}(y)$.
By a well-known formula for the Mellin transform of the $K$-Bessel function, we
have
\begin{equation}
\label{eq:gammaVformula}
 \gamma_{V_T}(1/2 + s) := \int_0^{\infty} V_T(2 \pi y) y^s \frac{dy}{y} =
2^{-3/2} \pi^{-s} \Gamma\Big(\frac{1/2 + s + iT}{2}\Big) \Gamma\Big(\frac{1/2 +
s - iT}{2}\Big).
\end{equation}
It is also helpful to recall the Rankin--Selberg $L$-function
\begin{equation}
\label{eq:Zformula}
 Z(s,E_T) := \sum_{n=1}^{\infty} \frac{\tau_{iT}(n)^2}{n^s} = \frac{\zeta(s-2iT)
\zeta(s+2iT) \zeta(s)^2}{\zeta(2s)}.
\end{equation}

\subsection{Miscellaneous lemmas}
Here we collect some basic tools used throughout this paper.

\begin{mylemma}[Vinogradov--Korobov]
\label{lemma:VinogradovKorobov}
 For some $c > 0$ and for any large $|t| \gg 1$, $1- \frac{c}{(\log |t|)^{2/3}} \leq \sigma \leq 1$, we have
 \begin{equation}
 \label{eq:VinogradovKorobovZeta}
  \zeta(\sigma + it)^{\pm 1} \ll (\log |t|)^{2/3+\varepsilon},
 \end{equation}
and
\begin{equation}
\label{eq:VinogradovKorobovZetalogarithmicderivative}
 \frac{\zeta'}{\zeta}(1 + it) \ll (\log |t|)^{2/3 + \varepsilon},
\end{equation}
\end{mylemma}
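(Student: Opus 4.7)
The plan is to deduce both estimates from the classical Vinogradov--Korobov zero-free region, which asserts that there is an absolute $c_0 > 0$ such that $\zeta(s) \neq 0$ whenever $\sigma \geq 1 - c_0 / ((\log |t|)^{2/3} (\log\log|t|)^{1/3})$ for $|t|$ large. This region is somewhat wider than the one claimed in the lemma, which gives room to combine it with upper bounds on $|\zeta|$ and pass to reciprocals and logarithmic derivatives.

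First I would establish $\zeta(\sigma + it) \ll (\log|t|)^{2/3+\varepsilon}$ uniformly in the stated strip. The standard route is to represent $\zeta(\sigma + it)$ as a short Dirichlet polynomial via the approximate functional equation (or a Perron-style truncation), and then estimate each dyadic block $\sum_{N < n \leq 2N} n^{-\sigma - it}$ by partial summation reducing to Vinogradov's bound $\sum_{N < n \leq 2N} n^{-it} \ll N \exp(-c (\log N)^3/(\log|t|)^{2})$ in the relevant range. Summing the dyadic contributions and balancing with the tail gives the claimed $(\log|t|)^{2/3+\varepsilon}$ upper bound.

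Next, for $\zeta(\sigma + it)^{-1}$ and $(\zeta'/\zeta)(1+it)$, I would invoke the Borel--Carath\'eodory theorem applied to $\log \zeta$. Working in a disc centered at a point $s_0 = 1 + (\log|t|)^{-2/3} + it$ just to the right of the $1$-line, the upper bound above yields $\operatorname{Re} \log \zeta(s) \ll \log\log |t|$ on a slightly larger circle, while the zero-free region ensures $\log\zeta$ is holomorphic in the disc. Borel--Carath\'eodory then bounds $|\log\zeta(s)|$ (hence $|\zeta|^{\pm 1}$) on a smaller concentric disc by $O((\log|t|)^{2/3+\varepsilon})$, and Cauchy's formula on $(\log\zeta)'$ yields the same bound for $\zeta'/\zeta$. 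Transporting these to the line $\sigma = 1$ inside the zero-free region completes the proof.

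The only genuinely hard inputs are Vinogradov's exponential sum estimate and the Vinogradov--Korobov zero-free region itself, both of which are classical and would simply be cited (e.g.\ from Titchmarsh or Iwaniec--Kowalski) rather than reproduced. No obstacles specific to this paper arise in deriving the stated bounds from them.
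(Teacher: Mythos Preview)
Your sketch is correct and follows the standard derivation of these bounds from Vinogradov's exponential-sum estimate and the Vinogradov--Korobov zero-free region via Borel--Carath\'eodory. The paper, however, does not prove this lemma at all: it simply records it as a known result with the citation ``see \cite[Corollary 8.28, Theorem 8.29]{IK},'' so your proposal goes well beyond what the paper does and essentially reproduces the argument one finds in the cited reference.
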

For a reference, see \cite[Corollary 8.28, Theorem 8.29]{IK}.
As a corollary to Lemma \ref{lemma:VinogradovKorobov}, it follows from standard methods from \cite{Davenport} (see \cite[Lemma 2.1]{GranvilleSound} for more details) that
\begin{equation}
\label{eq:logzetaapproximation}
\sum_{p \leq x} p^{-1-it} = \log \zeta(1+it) + O(1),
\end{equation}
provided $\log x \gg (\log |t|)^{2/3+\varepsilon}$.

We also need the mean value theorem for Dirichlet polynomials of \cite{MontgomeryVaughan}.
\begin{mylemma}[Montgomery-Vaughan]
\label{lemma:MontgomeryVaughan} 
Let $a_n$ be an arbitrary complex sequence. We have
 \begin{equation*}
  \int_0^{U} \Big|\sum_{n \leq N} a_n n^{-iu} \Big|^2 du = \sum_{n \leq N} |a_n|^2 (U + O(n)).
 \end{equation*}
 More generally, if $\lambda_1, \dots, \lambda_R$ are distinct real numbers, and $\delta_r = \min_{s \neq r} |\lambda_r - \lambda_s|$, then
 \begin{equation}
 \label{eq:MontgomeryVaughanVersion2}
  \int_0^{U} \Big| \sum_{r \leq R} a_r e^{i \lambda_r u} \Big|^2 du \leq \sum_{r \leq R} |a_r|^2 (U + 3\pi \delta_r^{-1}) .
 \end{equation}
\end{mylemma}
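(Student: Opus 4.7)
The plan is to prove the second, more general statement \eqref{eq:MontgomeryVaughanVersion2} first, and then deduce the first statement as a specialization. Both rely on the Hilbert-type inequality of Montgomery--Vaughan, which states that for distinct reals $\lambda_1,\dots,\lambda_R$ with gap parameters $\delta_r = \min_{s\neq r}|\lambda_r-\lambda_s|$ and arbitrary complex $b_r$,
\begin{equation*}
\Bigl| \sum_{r\neq s} \frac{b_r \overline{b_s}}{\lambda_r-\lambda_s} \Bigr| \leq \tfrac{3\pi}{2} \sum_r |b_r|^2 \delta_r^{-1}.
\end{equation*}

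For the second statement I would expand the square and integrate termwise:
\begin{equation*}
\int_0^U \Bigl| \sum_r a_r e^{i\lambda_r u} \Bigr|^2 du = U \sum_r |a_r|^2 + \sum_{r\neq s} a_r \overline{a_s} \cdot \frac{e^{i(\lambda_r-\lambda_s)U}-1}{i(\lambda_r-\lambda_s)}.
\end{equation*}
The diagonal contributes exactly $U\sum_r |a_r|^2$. Splitting the off-diagonal numerator into its two pieces yields two sums of Hilbert type, one involving $a_r$ and the other involving $b_r := a_r e^{i\lambda_r U}$; in both cases $|b_r|=|a_r|$. Applying the Montgomery--Vaughan Hilbert inequality to each and combining with the factor $1/i$ out front gives a total off-diagonal contribution of at most $3\pi \sum_r |a_r|^2 \delta_r^{-1}$, which yields \eqref{eq:MontgomeryVaughanVersion2}.

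To deduce the first statement, I would take $R=N$, $a_n$ as given, and $\lambda_n = -\log n$, so that $e^{i\lambda_n u} = n^{-iu}$. The gap parameter becomes $\delta_n = \min_{m\neq n}|\log(n/m)|$, and the nearest-neighbour distances $|\log(n/(n\pm 1))| \asymp 1/n$ give $\delta_n^{-1} = O(n)$. Substituting this into \eqref{eq:MontgomeryVaughanVersion2} gives the claimed asymptotic with error $O(n)$ absorbed into each diagonal weight.

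The only real obstacle is invoking the Hilbert-type inequality; all else is routine bookkeeping. I would simply cite \cite{MontgomeryVaughan} for this inequality rather than reproving it, since its proof (via the choice of an auxiliary extremal function on the real line to majorise the signum kernel) is a substantial argument in its own right and is standard in the literature.
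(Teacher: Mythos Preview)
Your argument is correct and is essentially the standard derivation from Montgomery--Vaughan's paper \cite{MontgomeryVaughan}. Note, however, that the paper does not prove this lemma at all: it is simply quoted as a known result with a citation, so there is no ``paper's own proof'' to compare against. Your sketch goes beyond what the paper does by actually outlining the reduction to the Hilbert-type inequality; this is fine, and the bookkeeping (diagonal extraction, splitting the off-diagonal numerator, and the specialization $\lambda_n=-\log n$ with $\delta_n^{-1}\asymp n$) is all accurate.
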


We require the following change of basis formula from additive characters to multiplicative.
\begin{mylemma}
\label{lemma:changeofbasis}
Suppose that $c_n$ is a finite sequence of complex numbers, $q \geq 1$ is an integer, and $(a,q)=1$.  Then
\begin{equation}
\sum_n c_n e\Big(\frac{an}{q}\Big) = \sum_{d|q} \frac{1}{\phi(q/d)} \sum_{\chi \shortmod{q/d}} \tau(\overline{\chi}) \chi(a) \sum_n c_{dn} \chi(n).
\end{equation}
\end{mylemma}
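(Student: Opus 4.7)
The plan is a two-step decomposition of the left-hand side. First, I would partition the sum $\sum_n c_n e(an/q)$ according to $d = (n, q)$: writing $n = dm$ with $(m, q/d) = 1$ reduces the additive character to $e(am/(q/d))$, and the coprimality $(a, q/d) = 1$ is inherited from $(a, q) = 1$. Second, on each reduced piece I would apply the classical Fourier expansion of $e(b/q')$ for $(b, q') = 1$ in terms of Dirichlet characters modulo $q'$, namely
\begin{equation*}
e\Big(\frac{b}{q'}\Big) = \frac{1}{\phi(q')} \sum_{\chi \shortmod{q'}} \tau(\overline{\chi}) \chi(b),
\end{equation*}
which is a one-line consequence of orthogonality of Dirichlet characters combined with the definition $\tau(\overline{\chi}) = \sum_{k \shortmod{q'}} \overline{\chi}(k) e(k/q')$.

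Substituting $b = am$ and $q' = q/d$, and using the multiplicativity $\chi(am) = \chi(a)\chi(m)$, I would arrive at
\begin{equation*}
\sum_n c_n e\Big(\frac{an}{q}\Big) = \sum_{d | q} \frac{1}{\phi(q/d)} \sum_{\chi \shortmod{q/d}} \tau(\overline{\chi}) \chi(a) \sum_{\substack{m \\ (m, q/d) = 1}} c_{dm} \chi(m).
\end{equation*}
Since $\chi(m) = 0$ whenever $(m, q/d) > 1$, the coprimality restriction on the innermost sum may simply be dropped, yielding the stated identity after renaming $m$ to $n$. I do not expect a serious obstacle here: the decomposition by $d = (n,q)$ is the natural bookkeeping, and the character-theoretic expansion of $e(b/q')$ is a standard identity, so the argument reduces to assembling these two ingredients cleanly and verifying that $(a,q)=1$ passes through the divisor decomposition to ensure $\chi(a)$ is well-defined and nonzero for every divisor $d$ of $q$.
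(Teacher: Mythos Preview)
Your proposal is correct and follows essentially the same approach as the paper, which simply cites \cite[(3.11)]{IK} (the character expansion of $e(b/q')$ for $(b,q')=1$) and says to let $d=(n,q)$. You have spelled out the details of this decomposition carefully and correctly.
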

The proof follows from \cite[(3.11)]{IK}, letting $d=(n,q)$.

We will also have need of the following
\begin{myprop}[Jutila-Motohashi \cite{JutilaMotohashiCrelle}]
\label{prop:JM}
Let $1 \ll U \ll V$, and let $q$ be a positive integer.  Then
 \begin{equation}
\int_{V \leq |u| \leq V + U} \sum_{\chi \shortmod{q}} |L(1/2 +iu, \chi)|^4 du
\ll q^{1+\varepsilon} (U + V^{2/3})^{1+\varepsilon},
\end{equation}
\end{myprop}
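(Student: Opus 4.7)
The plan is to proceed via an approximate functional equation for $L(s,\chi)^2$, expressing $|L(1/2+iu,\chi)|^4 = |L(1/2+iu,\chi)^2|^2$ as the square of a smoothly weighted Dirichlet polynomial $\sum_n d(n)\chi(n) n^{-1/2-iu}$ of effective length $n \ll q(|u|+1) \ll qV$, where $d$ denotes the classical divisor function. After a smooth dyadic partition in the length variable (losing a harmless $(\log qV)^{O(1)}$), the task reduces to bounding
\[
\mathcal{M}(N) := \int_V^{V+U} \sum_{\chi \bmod q} \Big|\sum_{n \asymp N} \frac{d(n)\chi(n)}{n^{1/2+iu}}\Big|^2 du
\]
for each dyadic $N$ in the range $1 \leq N \ll q^{1+\varepsilon} V^{1+\varepsilon}$, and then assembling the dyadic pieces against the symmetrizing gamma factors from the functional equation.

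I would then open the square and invoke orthogonality of the characters modulo $q$, writing $\frac{1}{\phi(q)}\sum_\chi \chi(m)\overline{\chi}(n)$ as the indicator of $m \equiv n \pmod q$ with $(mn,q)=1$. The diagonal $m=n$ contributes at most $\phi(q) U \sum_{n \asymp N,(n,q)=1} d(n)^2/n \ll qU (\log qV)^{O(1)}$, which is already of the shape allowed by the target bound. The off-diagonal contribution comes from pairs $m = n + qh$ with $h \neq 0$, weighted by the oscillatory integral
\[
\int_V^{V+U}\Big(1 + \tfrac{qh}{n}\Big)^{-iu} du \ll \min\Big(U,\,\tfrac{N}{Vq|h|}\Big),
\]
which effectively truncates the relevant shifts to $|h| \ll N/(qV)$.

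The crux is the remaining bilinear sum in the shifted divisor function $\sum_{n \asymp N} d(n) d(n+qh)$, averaged over $h$ with the above oscillatory weight. The trivial evaluation of each individual shifted sum via the Estermann $D$-function saves only logarithms and is insufficient to produce the $V^{2/3}$ contribution; the necessary saving arises from spectral theory. Concretely, I would apply a delta method (the Duke-Friedlander-Iwaniec $\delta$-symbol, or Jutila's circle method variant) to detect the shift equation, obtaining a linear combination of incomplete sums of Kloosterman sums $S(m_1,m_2;c)$ with moduli $c$ in a range determined by $N, q, V$. Applying the Deshouillers-Iwaniec spectral large sieve to the resulting bilinear form in $(h,c)$, and combining the archimedean oscillation from the $u$-integral with the spectral cancellation, yields the crucial $V^{2/3}$ improvement over the trivial $V$.

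The principal obstacle is securing genuine uniformity in the level $q$: one needs a bound linear (up to $q^\varepsilon$) in $q$, not $q^{1+\theta}$ with large $\theta$. Because the Kloosterman moduli arising from the delta method intertwine with $q$, one must carefully factor the coprimality condition $(mn,q)=1$ through Mobius inversion, track how principal versus induced characters interact with the modular structure, and invoke a version of the Deshouillers-Iwaniec large sieve that is uniform in the level aspect. A secondary (but nontrivial) technical hurdle is absorbing the off-diagonal main terms of the Estermann type, which behave logarithmically in $h$ and must be summed coherently against the $u$-integration so as not to destroy the $q^{1+\varepsilon}$ shape.
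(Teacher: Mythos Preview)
The paper does not prove this proposition; it is quoted as a result of Jutila and Motohashi \cite{JutilaMotohashiCrelle}, with the remark that the case $q=1$ is due to Iwaniec \cite{IwaniecFourthMoment}. There is therefore no proof in the paper to compare against.

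Your outline is a plausible roadmap toward such a result, and the ingredients you name (approximate functional equation, orthogonality, shifted divisor sums, spectral large sieve) are the standard ones in this circle of problems. That said, what you have written is a plan rather than a proof: you explicitly flag the two genuinely hard steps---uniformity in $q$ through the spectral machinery, and control of the off-diagonal main terms---without actually carrying them out. These are precisely where the substance of the Jutila--Motohashi argument lies, and neither can be dismissed as routine. It is also worth noting that Jutila and Motohashi do not proceed via the $\delta$-method plus Deshouillers--Iwaniec as you propose; their approach is a direct spectral-theoretic treatment of the relevant exponential sums (in the spirit of Motohashi's explicit formula for the fourth moment), which handles the $q$-aspect somewhat differently from what you sketch. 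Your route is not unreasonable, but you should be aware that turning it into an honest proof with the stated $q^{1+\varepsilon}$ dependence would require substantial additional work beyond what is indicated here.
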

This is a generalization of a result of Iwaniec \cite{IwaniecFourthMoment}, corresponding to the case $q=1$.

We will also frequently refer to results in the literature on exponential integrals and exponential sums.  Textbook references include  \cite{GK} \cite{Huxley} \cite{IK}.

\subsection{Initial developments}
\label{section:initialdevelopments}
We will calculate $F_{\psi,x,T}(s) = F(s)$ here, and then perform some easy approximations.  Inserting \eqref{eq:ETFourier} into \eqref{eq:Fdef}, using \eqref{eq:gammaVformula} and the Mellin inversion formula $\psi(y) = \frac{1}{2 \pi i} \int_{(1)} \widetilde{\psi}(-u) y^{u} du$, we obtain
\begin{multline}
\label{eq:Fformula1}
 F(s) = \mu \widetilde{\psi}(1/2 + s + iT) + \overline{\mu}
\widetilde{\psi}(1/2 + s - iT)
\\
+ \frac{\rho^*(1)}{(2 \pi )^{s}} \sum_{n \neq 0} \frac{\tau_{iT}(n)
e(nx)}{|n|^{1/2 + s}} \frac{1}{2 \pi i} \int_{(1)} \widetilde{\psi}(-u) |n|^{-u}
\gamma_{V_T}(1/2 + s+u) du.
\end{multline}
Next we develop some simple approximations.  First, note that
\begin{equation}
\label{eq:rhosize}
 \frac{|\rho^*(1)|^2}{\cosh(\pi T)} = \frac{2/\pi}{|\zeta(1+2iT)|^2}.
\end{equation}
By \eqref{eq:gammaVformula} and Stirling's bound,
combined with \eqref{eq:rhosize}, the integral appearing in
\eqref{eq:Fformula1} is exponentially small if $|\text{Im}(s+u)-T| \gg
(\log^2 T)$.  Since $\widetilde{\psi}$ has rapid decay in the imaginary
direction, we have that $F(it) \ll (1+|t|)^{-2} T^{-100}$, say, for $|t| \geq
T + T^{\varepsilon}$.

We can simplify $F(s)$ in the range $|\text{Im}(s)| \leq T -
T^{\varepsilon}$.  Let
\begin{equation}
\label{eq:F0def}
 F_0(s) = \rho^*(1) \gamma_{V_T}(1/2+s)
\sum_{n \neq 0} \frac{\tau_{iT}(n)
e(nx)}{|n|^{1/2 + s}} \psi
\Big(\frac{\sqrt{|s+iT||s-iT|}}{2 \pi |n|} \Big).
\end{equation}
\begin{mylemma}
\label{lemma:FitF0approximation}
 For each $\varepsilon > 0$ there exists $\delta > 0$ so that
 \begin{equation}
  \int_{|t| \leq T - T^{\varepsilon}} |F(it)|^2 dt =
  \int_{|t| \leq T - T^{\varepsilon}} |F_0(it)|^2 dt + O(T^{-\delta}).
 \end{equation}
\end{mylemma}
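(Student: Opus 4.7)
The plan is to decompose $F(it)$ using \eqref{eq:Fformula1} into its two boundary pieces $\mu\widetilde\psi(1/2+it\pm iT)$ and the Mellin--Barnes integral piece, and to show that on the range $|t|\leq T-T^{\varepsilon}$ each approximates the corresponding part of $F_{0}(it)$. The boundary terms are the easy part: since $|t\pm T|\geq T^{\varepsilon}$ throughout, the super-polynomial decay of $\widetilde\psi$ in the imaginary direction forces $\mu\widetilde\psi(1/2+it\pm iT)\ll T^{-100}$ pointwise, so they contribute $O(T^{-100})$ to $\int|F|^{2}\,dt$ directly, and a negligible amount through the Cauchy--Schwarz cross term with the Mellin--Barnes piece (using the crude polylog bound on $\int|F_{0}|^{2}\,dt$ described below).

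The heart of the argument is a stationary-phase-style identification of the leading behavior of the inner $u$-integral in \eqref{eq:Fformula1}. Apply Stirling's formula $\Gamma(z+a)/\Gamma(z)=z^{a}\bigl(1+O((1+|a|^{2})/|z|)\bigr)$ to the ratio
$$
\frac{\gamma_{V_T}(1/2+s+u)}{\gamma_{V_T}(1/2+s)} = \pi^{-u}\prod_{\pm}\frac{\Gamma((\tfrac{1}{2}+s+u\pm iT)/2)}{\Gamma((\tfrac{1}{2}+s\pm iT)/2)},
$$
with $|z|\asymp (T\pm t)/2$ and $a=u/2$; this yields
$$
\frac{\gamma_{V_T}(1/2+s+u)}{\gamma_{V_T}(1/2+s)} = \left(\frac{\sqrt{|s+iT|\,|s-iT|}}{2\pi}\right)^{u}\Bigl(1+O\Bigl(\tfrac{1+|u|^{2}}{T-|t|}\Bigr)\Bigr).
$$
Substituting the leading factor into the $u$-integral in \eqref{eq:Fformula1}, multiplying by $|n|^{-u}$, and using the Mellin inversion $\frac{1}{2\pi i}\int_{(1)}\widetilde\psi(-u)y^{u}\,du=\psi(y)$ at $y=\sqrt{|s+iT|\,|s-iT|}/(2\pi|n|)$ recovers exactly the $n$-th summand of $F_{0}$ from \eqref{eq:F0def}. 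Before invoking Stirling one first truncates the contour to $|\mathrm{Im}(u)|\leq (\log T)^{2}$ via the rapid decay of $\widetilde\psi(-u)$, so that on the truncated contour the Stirling remainder is bounded by $(\log T)^{4}/(T-|t|)\leq (\log T)^{4}/T^{\varepsilon}$, a genuine power saving.

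The final step is to convert this pointwise statement into an $L^{2}$ estimate. The difference $F(it)-F_{0}(it)$ is another Dirichlet polynomial of the same shape as $F_{0}(it)$ but with each coefficient multiplied by the Stirling error factor $O((\log T)^{4}/(T-|t|))$. The Mean Value Theorem for Dirichlet polynomials (Lemma~\ref{lemma:MontgomeryVaughan}), combined with the Rankin--Selberg bound $\sum_{n\leq N}|\tau_{iT}(n)|^{2}\ll N(\log T)^{O(1)}$ derived from \eqref{eq:Zformula} and the bound $|\rho^{*}(1)|^{2}|\gamma_{V_T}(1/2+it)|^{2}\ll (\log T)^{O(1)}/\sqrt{T^{2}-t^{2}}$ obtained from \eqref{eq:rhosize}, Stirling, and Lemma~\ref{lemma:VinogradovKorobov}, then gives
$$
\int_{|t|\leq T-T^{\varepsilon}}|F(it)-F_{0}(it)|^{2}\,dt \ll T^{-\delta}
$$
for some $\delta>0$. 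The cross term $2\mathrm{Re}\int F_{0}\overline{(F-F_{0})}\,dt$ is handled by Cauchy--Schwarz and the bound $\int|F_{0}(it)|^{2}\,dt\ll (\log T)^{O(1)}$, which also saves a power of $T$. The main obstacle is uniformity of the Stirling remainder as $|t|$ approaches $T-T^{\varepsilon}$: there $T-|t|$ is only $T^{\varepsilon}$ and the saving is marginal, so one must track the polylogarithmic constants carefully and verify that the net $T^{\varepsilon/2}$ gain per summand survives both the $n$-summation and the $t$-integration.
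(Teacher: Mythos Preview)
Your overall strategy matches the paper's: apply Stirling to the ratio $\gamma_{V_T}(1/2+s+u)/\gamma_{V_T}(1/2+s)$, recover $\psi$ by Mellin inversion to identify the leading term as $F_0$, and then control the remainder in $L^2$ via the mean value theorem for Dirichlet polynomials. The treatment of the constant-term pieces $\mu\widetilde\psi(1/2+it\pm iT)$ is fine.

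There is, however, a real gap in the remainder step. You assert that ``$F(it)-F_0(it)$ is another Dirichlet polynomial of the same shape as $F_0(it)$ but with each coefficient multiplied by the Stirling error factor $O((\log T)^4/(T-|t|))$.'' This is not true as stated: the one-term Stirling remainder is only an \emph{upper bound}, not a specific function of $u$, so after the $u$-integral you do not obtain a Dirichlet polynomial with a compactly supported weight like $\psi$ to which Lemma~\ref{lemma:MontgomeryVaughan} applies. If you try instead to bound the error contribution with absolute values, the $n$-sum is no longer truncated to $|n|\asymp\sqrt{T^2-t^2}$; on the line $\mathrm{Re}(u)=1$ the small-$n$ terms carry a large factor $(\sqrt{T^2-t^2}/|n|)^1$ and the resulting pointwise bound on $|F-F_0|$, once squared and integrated, grows like $T^{1/2-\varepsilon/2}$ rather than decaying.

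The paper repairs exactly this point by taking the full Stirling \emph{asymptotic expansion}
\[
\Gamma\Big(\tfrac{1/2+s+u\pm iT}{2}\Big)\sim\Gamma\Big(\tfrac{1/2+s\pm iT}{2}\Big)\Big(\tfrac{1/2+s\pm iT}{2}\Big)^{u/2}\Big(1+\sum_{j\geq 1}\tfrac{P_{j,\pm}(u)}{(s\pm iT)^j}\Big),
\]
truncated at $j\asymp\varepsilon^{-2}$ so that the tail is $O(T^{-100})$. Each correction term has a \emph{specific} polynomial $P_{j,\pm}(u)$ in the numerator; after the $u$-integral this polynomial becomes a linear combination of derivatives of $\psi$, so every $F_l$ (for $1\leq l\ll\varepsilon^{-2}$) is a genuine Dirichlet polynomial with a fixed, compactly supported weight function, multiplied by explicit negative powers of $(s\pm iT)$. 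Now the mean value theorem legitimately gives $\int|F_l(it)|^2\,dt\ll T^{-\delta}$ for $l\geq 1$. Your argument becomes correct once you replace the one-term Stirling bound by this finite asymptotic expansion.
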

As a consequence of Lemma \ref{lemma:FitF0approximation}, 
in the statements of Propositions \ref{prop:bulkrangeIntro}, \ref{prop:biggisht}, and \ref{prop:FboundmediumDelta}, we may freely replace $F$ by $F_0$.  We will do so systematically in the rest of the paper, with no further mention.

Note that if $s= it$, $0 \leq t \leq T$, the function $\psi$ appearing in
\eqref{eq:F0def} is supported on $|n| \asymp T^{1/2} (T-t)^{1/2}$.  
 
\begin{proof}
In the range $|t|  \leq T - T^{\varepsilon}$, Stirling's formula gives
\begin{equation*}
 \Gamma\Big(\frac{1/2 + s + u \pm iT}{2}\Big) \sim \Gamma\Big(\frac{1/2 + s \pm iT}{2}\Big)
 \Big(\frac{1/2 + s \pm iT}{2} \Big)^{u/2} \Big(1 + \sum_{j=1}^{\infty}
\frac{P_{j, \pm}(u)}{(s \pm iT)^j} \Big),
\end{equation*}
where $P_{j, \pm}$ is a polynomial, and
where the meaning of $\sim$ here is as an asymptotic expansion.  Therefore,
since $|s \pm iT| \gg T^{\varepsilon}$, we can truncate the expansion at $j
\asymp \varepsilon^{-2}$ with a very small error term.
By \eqref{eq:gammaVformula}, we have
\begin{equation*}
\gamma_{V_T}(1/2 + s + u) \sim (2\pi)^{-u} |s+iT|^{\frac{u}{2}} |s-iT|^{\frac{u}{2}}
\gamma_{V_T}(1/2 + s) 
\prod_{\pm}
\Big(1 + \sum_{j=1}^{\infty}
\frac{P_{j,\pm}(u)}{(s \pm iT)^j} \Big).
\end{equation*}
Therefore, for $s=it$, $|t| \leq T - T^{\varepsilon}$,
\begin{multline*}
F(s) = \Big[\rho^*(1) \gamma_{V_T}(1/2+s)
\sum_{n \neq 0} \frac{\tau_{iT}(n)
e(nx)}{|n|^{1/2 + s}}
\\
\frac{1}{2 \pi i} \int_{(1)} \widetilde{\psi}(-u)
\Big(\frac{\sqrt{|s+iT||s-iT|}}{2 \pi |n|} \Big)^u
\prod_{\pm} \Big(1 + \sum_{1 \leq j \leq \epsilon^{-2}} \frac{P_{j, \pm}(u)}{(s \pm
iT)^j} \Big) du \Big] + O(T^{-100}).
\end{multline*}
The leading term in the $u$-integral is calculated exactly via \eqref{eq:F0def}, so that
\begin{equation*}
F(s) =\sum_{0 \leq l \leq \epsilon^{-2}} F_l(s) + O(T^{-100}),
\end{equation*}
say.
The lower-order terms in the asymptotic expansion with $l \geq 1$ involve linear combinations
of derivatives of $\psi$ in place of $\psi$, and multiplied by factors of the
form $(s + iT)^{-j_1} (s-iT)^{-j_2}$, and are hence all of the same essential
shape yet smaller by these additional factors.  In fact, one can show that
$\int_{|t| \leq T - T^{\varepsilon}} |F_0(it)|^2 dt \ll T^{\varepsilon'}$, using only the mean value theorem for
Dirichlet polynomials (Lemma \ref{lemma:MontgomeryVaughan}), and by similar reasoning, for $j \geq 1$, we have $\int_{|t|
\leq T-T^{\varepsilon}} |F_j(it)|^2 dt \ll T^{-\delta}$, for some $\delta > 0$.
\end{proof}

Next we smoothly decompose the $t$-integral into pieces of the form $T - |t|
\asymp \Delta$ where $T^{\varepsilon} \ll \Delta \ll T$ (it will also be
necessary to treat the integral of $|F(it)|^2$ with $T-|t| \ll
T^{\varepsilon}$ for which see Section \ref{section:verysmallDelta} below).
As a first-order
approximation, we mention that for such $t$, we have
\begin{equation}
\label{eq:gammaVStirling}
t^k \frac{d^k}{dt^k} \cosh(\pi T) |\gamma_{V_T}(1/2 + it)|^2 \ll 
(\Delta T)^{-1/2}, \quad k=0,1,2,\dots.
\end{equation}

For the purposes of calculating diagonal terms, it is convenient to record the
following
\begin{mylemma}
\label{lemma:diagonaltermsum}
Let $w$ be a fixed smooth, compactly-supported function.
Suppose $\log N \gg (\log T)^{2/3 + \delta}$, for some fixed $\delta >
0$.  Then
\begin{equation}
\label{eq:diagonaltermsum}
 \sum_{n=1}^{\infty} |\tau_{iT}(n)|^2 w\Big(\frac{n}{N}\Big) = \frac{|\zeta(1 +
2iT)|^2}{\zeta(2)}  [\widetilde{w}(1) N \log{N} + O(N(\log T)^{2/3 +
\varepsilon}) ]. 
\end{equation}
\end{mylemma}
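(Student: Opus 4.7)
The plan is to evaluate the sum via Mellin inversion in terms of the Dirichlet series $Z(s,E_T)$ from \eqref{eq:Zformula} and then shift the contour past its double pole at $s=1$, bounding the error from the shifted line using the Vinogradov--Korobov estimates of Lemma \ref{lemma:VinogradovKorobov}.

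Concretely, writing $w(x) = \frac{1}{2\pi i}\int_{(2)} \widetilde{w}(s) x^{-s}\,ds$, the sum becomes
\begin{equation*}
\sum_n |\tau_{iT}(n)|^2 w(n/N) = \frac{1}{2\pi i}\int_{(2)} \widetilde{w}(s) N^s Z(s,E_T)\,ds.
\end{equation*}
Factor $Z(s,E_T) = g(s)\zeta(s)^2$, where $g(s)=\zeta(s-2iT)\zeta(s+2iT)/\zeta(2s)$ is holomorphic and nonvanishing in a neighborhood of $s=1$ (using $T\geq 1$ so the two shifted zetas are away from their pole). Shift the contour to the line $\sigma_0 = 1 - c/(\log T)^{2/3}$, chosen so that by Lemma \ref{lemma:VinogradovKorobov} the factors $\zeta(s\pm 2iT)$, $\zeta(2s)^{-1}$, and $\zeta(s)^2$ all admit polynomial-in-$\log T$ bounds along the shifted line.

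On the way we collect the residue at the double pole $s=1$. Writing $H(s) = g(s)\widetilde{w}(s)N^s$ and using $\zeta(s)^2 = (s-1)^{-2} + 2\gamma(s-1)^{-1} + O(1)$, the residue equals
\begin{equation*}
H'(1) + 2\gamma H(1) = g(1)\widetilde{w}(1) N\log N + N\bigl[g'(1)\widetilde{w}(1) + g(1)\widetilde{w}'(1) + 2\gamma g(1)\widetilde{w}(1)\bigr].
\end{equation*}
Since $g(1) = |\zeta(1+2iT)|^2/\zeta(2)$ matches the prefactor in the lemma, it suffices to show the bracketed lower-order terms contribute $O\bigl(g(1) N (\log T)^{2/3+\varepsilon}\bigr)$. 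The key point is that
\begin{equation*}
\frac{g'(1)}{g(1)} = \frac{\zeta'}{\zeta}(1-2iT) + \frac{\zeta'}{\zeta}(1+2iT) - 2\frac{\zeta'}{\zeta}(2) \ll (\log T)^{2/3+\varepsilon}
\end{equation*}
by \eqref{eq:VinogradovKorobovZetalogarithmicderivative}, while $\widetilde{w}(1), \widetilde{w}'(1)$ are absolute constants.

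The main obstacle, as usual with log-scale error terms, is controlling the shifted contour integral. On the line $\text{Re}(s) = \sigma_0$, I would truncate using the rapid decay of $\widetilde{w}$ (say to $|\text{Im}(s)| \leq (\log T)^A$), which keeps us well inside the Vinogradov--Korobov zero-free region where $|\zeta(s \pm 2iT)|$, $|\zeta(s)|$, $|\zeta(2s)|^{-1} \ll (\log T)^{O(1)}$. This yields the bound
\begin{equation*}
\Bigl|\frac{1}{2\pi i}\int_{(\sigma_0)} \widetilde{w}(s) N^s Z(s,E_T)\,ds\Bigr| \ll N^{\sigma_0} (\log T)^{O(1)} = N \exp\!\Bigl(-\frac{c\log N}{(\log T)^{2/3}}\Bigr)(\log T)^{O(1)}.
\end{equation*}
The hypothesis $\log N \gg (\log T)^{2/3+\delta}$ makes the exponential factor decay faster than any negative power of $\log T$, so this is absorbed into the error term and the proof is complete.
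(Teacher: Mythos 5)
Your proof is correct and follows essentially the same route as the paper's: Mellin inversion against $Z(s,E_T)$, contour shift to $\sigma_0 = 1-c/(\log T)^{2/3}$, residue at the double pole $s=1$, and Vinogradov--Korobov on the shifted line, with the hypothesis $\log N \gg (\log T)^{2/3+\delta}$ making $N^{\sigma_0-1}$ super-polynomially small in $\log T$. One small omission: when shifting from $\mathrm{Re}(s)=2$ to $\sigma_0 < 1$, you also cross the simple poles of $\zeta(s\mp 2iT)$ at $s = 1\pm 2iT$; these residues are negligible because $\widetilde{w}$ has rapid decay in the imaginary direction (as the paper notes explicitly), a point you should state rather than leave implicit in your truncation remark, since the poles lie at height $2T$ which is far outside the truncation window $|\mathrm{Im}(s)| \leq (\log T)^A$.
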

\begin{proof}
 By Mellin inversion and \eqref{eq:Zformula}, the sum on the left hand side of
\eqref{eq:diagonaltermsum} is
\begin{equation*}
 \frac{1}{2 \pi i} \int_{(2)} N^s \widetilde{w}(s) \frac{\zeta(s-2iT)
\zeta(s+2iT) \zeta(s)^2}{\zeta(2s)}  ds.
\end{equation*}
We evaluate the integral by the standard method of moving the contour, in this case to
$\text{Re}(s) = \sigma = 1- \frac{c}{(\log T)^{2/3}}$, for some constant $c > 0$.  We first calculate the residue at $s=1$. Using
\begin{equation*}
\begin{gathered}
 N^s = 1 + (s-1) \log N + O((s-1)^2),
 \\
 \zeta(s-2iT)\zeta(s+2iT) = |\zeta(1+2iT)|^2 (1 + (s-1)
2 \text{Re}\frac{\zeta'}{\zeta}(1 + 2iT) + O((s-1)^2),
\end{gathered}
\end{equation*}
and the Vinogradov--Korobov bound (Lemma \ref{lemma:VinogradovKorobov}),
one calculates that the residue at $s=1$ is consistent with the right hand side of
\eqref{eq:diagonaltermsum}.  The residues at $s = 1 \pm 2iT$ are negligible because $\widetilde{w}$ has rapid decay in the imaginary direction.

Finally we bound the integral along the line $\sigma$.  We use the Vinogradov--Korobov bound again, which leads to an error of size 
$ N (\log T)^{4/3} \exp(-c \frac{\log N}{(\log T)^{2/3}})$, which is bounded by the error term appearing in \eqref{eq:diagonaltermsum}.
\end{proof}

We  directly quote the following.
\begin{myprop}[\cite{Young}, Theorem 8.1]
\label{prop:SCS}
Suppose that $w(x)$ is a smooth function on the positive reals supported on $Y \leq x \leq  2Y$ and satisfying $w^{(j)}(x) \ll_j (P/Y)^j$ for some parameters $1 \leq P \leq Y$.  Let $\theta = 7/64$, and set $R = P + \frac{T|m|}{Y}$.  Then for $m \neq 0$, $ R \ll T/(TY)^{\delta}$, we have
\begin{equation}
\label{EisensteinShiftedConvolutionSum}
 \sum_{n \in \mz} \tau_{iT}(n) \tau_{iT}(n+m) w(n) = M.T. +  E.T.,
\end{equation}
where
\begin{equation}
\label{eq:MTdef}
 M.T. = \sum_{\pm}  \frac{|\zeta(1 + 2iT)|^2}{\zeta(2)} \sigma_{-1}(m) \int_{\max(0, -m)}^{\infty} (x + m)^{\mp iT} x^{\pm iT} w(x) dx,
\end{equation}
and
\begin{equation}
\label{eq:ETdef}
 E.T.\ll 
(|m|^{\theta} T^{\frac13} Y^{\frac12} R^2 
+ 
T^{\frac16} Y^{\frac34}  R^{\frac12} ) (TY)^{\varepsilon} .
\end{equation}
Furthermore, with $R = P + \frac{TM}{Y}$, we have
\begin{equation}
\label{eq:ETsummedoverm}
 \sum_{1 \leq |m| \leq M} |E.T.| \ll (M T^{\frac13} Y^{\frac12} R^2 
+ 
M T^{\frac16} Y^{\frac34}  R^{\frac12} ) (TY)^{\varepsilon}.
\end{equation}
\end{myprop}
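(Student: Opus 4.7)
Since $\tau_{iT}(n)$ are essentially the Fourier coefficients of the Eisenstein series $E(z, 1/2+iT)$, the natural strategy is spectral: recast the shifted convolution sum as a sum of Kloosterman sums weighted by a Bessel transform, then invoke Kuznetsov's formula to decompose over the spectrum of $SL_2(\mathbb{Z})\backslash \mathbb{H}$. The main term should come from the continuous (Eisenstein) spectrum at its pole $s=1$, while the error term should come from the cuspidal spectrum together with the Eisenstein spectral integral away from the pole.

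First I would open the divisor functions via $\tau_{iT}(n)=\sum_{ab=|n|}(a/b)^{iT}$ and detect the equation $a_1 b_1 - a_2 b_2 = m$ by the Duke-Friedlander-Iwaniec delta method, arriving at a sum of the form $\sum_c c^{-1}S(m,n;c)\phi(\sqrt{mn}/c)$ with extra internal summations, where $\phi$ absorbs the weight $w$ and the $iT$-oscillations. Equivalently one may build a Poincar\'e series $P_m(z,s)$ and unfold $\langle |E_T|^2, P_m \rangle$. Kuznetsov's formula then converts the $c$-sum to
\begin{equation*}
\sum_j \check\phi(t_j)\rho_j(m)\overline{\rho_j(n)} + \text{(Eisenstein spectrum)},
\end{equation*}
where stationary-phase analysis of $\check\phi$ shows that $t_j$ is effectively localized near $\pm T$ in a window of size $R = P + T|m|/Y$.

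The main term is then extracted from the residue at $s=1$ of the Eisenstein Rankin-Selberg factor $\zeta(s)\zeta(s+2iT)\zeta(s-2iT)/\zeta(2s)$; evaluating this residue together with the archimedean integral over $w$ produces the expression $\frac{|\zeta(1+2iT)|^2}{\zeta(2)}\sigma_{-1}(m)\int(x+m)^{\mp iT}x^{\pm iT}w(x)\,dx$ of \eqref{eq:MTdef}. For the cuspidal error, I would apply the Kim-Sarnak bound $\lambda_j(m)\ll m^{7/64}$ (accounting for the factor $|m|^\theta$) together with the Deshouillers-Iwaniec spectral large sieve restricted to $|t_j \mp T|\ll R$. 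Splitting $\check\phi$ into its bulk (genuine stationary phase) regime and its transition (Airy) regime should yield the two competing summands $|m|^\theta T^{1/3}Y^{1/2}R^2$ and $T^{1/6}Y^{3/4}R^{1/2}$ in \eqref{eq:ETdef}; the continuous spectrum away from the pole is handled by fourth-moment estimates for $\zeta$ (Proposition \ref{prop:JM} with $q=1$).

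The principal obstacle is the careful three-regime analysis of $\check\phi(t)$ (bulk $|t|\ll T-R$, transition $|t\mp T|\asymp R$, and tail $|t|\gg T+R$): one must derive the sharp size of the Bessel transform in each regime and match it with the appropriate spectral large-sieve estimate to recover the exact exponents in \eqref{eq:ETdef}. The summed version \eqref{eq:ETsummedoverm} additionally requires either a large sieve carrying an extra sum over $m$ (in the style of Deshouillers-Iwaniec) or cancellation in the triple products as $m$ varies; otherwise one would lose a factor of $M$ upon summing the individual bound, degrading the first summand to $M^2 T^{\theta+1/3}Y^{1/2}R^2$.
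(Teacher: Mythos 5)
This proposition is imported verbatim from \cite{Young}, Theorem 8.1, and the present paper gives no proof of it. Your outline is broadly consistent with the spectral approach used in that reference: detect the shifted convolution via a Poincar\'e series and Kuznetsov's formula, extract the main term from the pole of the Eisenstein (continuous-spectrum) contribution, bound the cuspidal part using the Kim--Sarnak exponent $\theta=7/64$ together with a spectral large sieve localized near $t_j=\pm T$ in a window of size $R$, and analyze the archimedean Bessel transform to produce the two competing terms in \eqref{eq:ETdef}.

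Two small corrections. First, for \eqref{eq:ETsummedoverm}, summing \eqref{eq:ETdef} trivially over $|m|\le M$ loses only the factor $M^{\theta}$ in the first term, not a full factor of $M$, and the exponent of $T$ does not change (your ``$M^2 T^{\theta+1/3}$'' is not the trivial bound). The $M^{\theta}$ saving comes from replacing the pointwise bound $|\lambda_j(m)|\ll m^{\theta}$ by the mean-square Rankin--Selberg bound $\sum_{|m|\le M}|\lambda_j(m)|^2\ll M^{1+\varepsilon}$ via Cauchy--Schwarz, which is precisely the ``Ramanujan on average over $m$'' interpretation stated after the proposition, rather than from a Deshouillers--Iwaniec style large sieve carrying an extra $m$-sum. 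Second, you correctly flag the multi-regime analysis of the Bessel transform near the transition $t=\pm T$ as the crux, but you do not carry it out; the specific exponents $T^{1/3}Y^{1/2}R^2$ and $T^{1/6}Y^{3/4}R^{1/2}$ depend on that analysis, so as written your submission is a plan rather than a proof.
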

Remark.  The bound \eqref{eq:ETsummedoverm} roughly means that the Ramanujan--Petersson conjecture, i.e., $\theta=0$, holds on average over $m$.

\section{The bulk range: extracting the main term}
\label{section:largeDelta}
The purpose of this section is to show Proposition \ref{prop:bulkrangeIntro}.

\subsection{First steps}
For shorthand, set $\eta = \eta(T)$.
Let $W(t)$ be a nonnegative function that is $1$ on $|t| \leq T - c_1
\eta T$, and $0$ on $|t| \geq T - c_2 \eta T$,
where $W^{(j)}(t) \ll (\eta T)^{-j}$, and where
$c_1 > c_2 > 0$ are fixed positive constants.

We have
\begin{equation}
\label{eq:F0integral}
 \intR W(t) |F_0(it)|^2 \frac{dt}{2\pi} = \frac{|\rho^*(1)|^2}{\cosh (\pi T)} \sum_{m, n}
\frac{\tau_{iT}(m) \tau_{iT}(n) e((n-m)x)}{|mn|^{1/2}} J(m,n),
\end{equation}
where
\begin{equation}
\label{eq:Jdef}
 J(m,n) = \intR W(t)
|\gamma_{V_T}(1/2 + it)|^2 \cosh(\pi T)
\psi
\Big(\frac{\sqrt{|T^2 -t^2|}}{2 \pi |n|} \Big)
\psi
\Big(\frac{\sqrt{|T^2 -t^2|}}{2 \pi |m|} \Big) \Big(\frac{|m|}{|n|}\Big)^{it}
\frac{dt}{2 \pi}.
\end{equation}
In the right hand side of \eqref{eq:F0integral}, write
\begin{equation}
\label{eq:F0integralsmoothedDecomposition}
\intR W(t) |F_0(it)|^2 \frac{dt}{2\pi} = M.T. + E(x,T),
\end{equation}
where $M.T.$ corresponds to the terms with $|m|=|n|$ and $E(x,T)$ are the terms with $|m| \neq |n|$.  We will bound $E(x,T)$ with absolute values.  We then have
\begin{equation}
\label{eq:MainTermDef}
 M.T. = \frac{|\rho^*(1)|^2}{\cosh( \pi T)} \sum_{n \neq 0} \frac{|\tau_{iT}(n)|^2}{|n|} J(n,n) (1 + e(2nx)),
\end{equation}
and
\begin{equation*}
 |E(x,T)| \ll |\zeta(1+2iT)|^{-2} \sum_{h \neq 0} \sum_n \frac{|\tau_{iT}(n)| |\tau_{iT}(n+h)|}{\sqrt{|n(n+h)|}} |J(n,n+h)|.
\end{equation*}

We will show that
\begin{equation}
\label{eq:MTasymptotic}
 M.T. = \frac{3}{\pi} \log(1/4 + T^2) \int_0^{\infty} \psi^2(y) \Big[1 + B_q(1,T) J_0\Big(\frac{\theta T}{y}\Big)\Big] \frac{dy}{y}
 + O(\eta^{1/2} \log T),
\end{equation}
and that
\begin{equation}
\label{eq:F0integralErrorTerm}
E(x,T) \ll \eta^{-3/2} (\log T)^{8/9 + \varepsilon}.
\end{equation} 
At this point we can also explain how to remove the smoothing factor $W$.  We have
\begin{align}
\begin{split}
\label{eq:F0unsmoothedUB}
 \int_{|t| \leq T- \eta T} |F_0(it)|^2 dt \leq \intR W_{+}(t) |F_0(it)|^2 dt 
 \\
 \int_{|t| \leq T- \eta T} |F_0(it)|^2 dt \geq \intR W_{-}(t) |F_0(it)|^2  dt, 
 \end{split}
\end{align}
where $W_{+}$ and $W_{-}$ satisfy the same properties as $W$, but with different choices of constants $c_i$. 
Since the approximations of $M.T.$ and $E(x,T)$ are independent of the choice of $W$, we can claim the same asymptotic  for $\int_{|t| \leq T - \eta(T) T} |F_0(it)|^2 dt$, showing that Proposition \ref{prop:bulkrangeIntro} follows from \eqref{eq:MTasymptotic} and \eqref{eq:F0integralErrorTerm}.

Now we analyze $J(m,n)$.  
Note that $m$ and $n$ are supported on 
\begin{equation*}
\eta^{1/2} T  \ll |m|, |n|
\ll T.   
\end{equation*}
By Stirling's approximation (see \eqref{eq:gammaVStirling}), we have
that $J(m,n) = \intR j(m,n,t,T)
(|m|/|n|)^{it} dt$, where $j$ is a function satisfying
$\frac{\partial^k}{\partial t^k} j(m,n,t,T) \ll \eta^{-1/2} T^{-1}
(\eta T)^{-k} $. Thus by standard
integration by parts, if $|m| \neq |n|$,
\begin{equation}
\label{eq:Jbound}
 J(m,n) \ll_k   \frac{\eta^{-1/2}}{[\eta T
\log(|m|/|n|)]^k}, \qquad k=0,1,2,\dots.
\end{equation}

\subsection{The off-diagonal terms}
By 
\cite[Theorem 1.2]{Holowinsky}, we have
\begin{equation*}
 \sum_{n \leq x} |\tau_{iT}(n) \tau_{iT}(n+h)| \ll x (h \log x)^{\varepsilon}   P_{iT}(x)^2,
\end{equation*}
where
\begin{equation*}
 P_{iT}(x) = \frac{1}{\log x} \prod_{p \leq x} \Big(1 + \frac{|\tau_{iT}(p)|}{p} \Big).
\end{equation*}
We claim
\begin{equation}
\label{eq:EiTBound}
 P_{iT}(T) \ll (\log T)^{1/3} |\zeta(1+2iT)|^{7/9} |\zeta(1+4iT)|^{-1/9}.
\end{equation}
To see this, we
begin with the elementary inequality
\begin{equation}
\label{eq:polynomialinequality}
 |x| \leq \frac{1}{18} (8 + 11 |x|^2 - |x|^4),
\end{equation}
valid for any $x \in [-2,2]$.  
Applying this with $x= \tau_{iT}(p) = p^{iT} + p^{-iT}$, we deduce
\begin{equation*}
|\tau_{iT}(p)| \leq \frac{1}{18}\Big(24 + 7(p^{2iT} + p^{-2iT}) -(p^{4iT} + p^{-4iT}) \Big),
\end{equation*}
which implies in turn
\begin{equation*}
P_{iT}(x) \ll  (\log x)^{-1} \prod_{p \leq x} \Big(1 + \frac{1}{p}\Big)^{4/3} \prod_{p \leq x} \Big|1 + \frac{1}{p^{1+2iT}} \Big|^{7/9} \prod_{p \leq x}\Big|1 + \frac{1}{p^{1+4iT}} \Big|^{-1/9}.
\end{equation*}
Of course, $\prod_{p \leq x} (1+ p^{-1}) \ll \log x$.   
Finally, using \eqref{eq:logzetaapproximation} gives \eqref{eq:EiTBound}.

Using \eqref{eq:Jbound}, we conclude
\begin{equation}
\label{eq:offdiagonal}
|E(x,T)| 
\ll \frac{\eta^{-3/2} (\log T)^{2/3+\varepsilon}}{|\zeta(1+2iT)|^{\frac{4}{9}} |\zeta(1+4iT)|^{\frac{2}{9}}}.
\end{equation}

\begin{mylemma}
\label{lemma:zetalemma}
For $t \gg 1$, we have
\begin{equation}
\label{eq:zetalemma}
 |\zeta(1+it)|^{-2} |\zeta(1+2it)|^{-1} \ll (\log t)^{1+\varepsilon}.
\end{equation}
\end{mylemma}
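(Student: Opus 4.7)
The plan is to combine the short Euler product approximation \eqref{eq:logzetaapproximation} with the pointwise trigonometric inequality
\[
2\cos\theta + \cos 2\theta \;=\; 2\bigl(\cos\theta + \tfrac12\bigr)^2 - \tfrac32 \;\ge\; -\tfrac32
\]
and with Mertens' theorem. This is a variant of the classical ``$2$-$1$-$\tfrac32$'' nonnegativity trick, applied on the critical line $\sigma = 1$ itself (via \eqref{eq:logzetaapproximation}) rather than just to the right of it as in Hadamard's proof of nonvanishing.

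First I would take real parts in \eqref{eq:logzetaapproximation}, once with argument $t$ and once with argument $2t$, to write
\[
\log|\zeta(1+it)| = \sum_{p \le x}\frac{\cos(t\log p)}{p} + O(1), \qquad \log|\zeta(1+2it)| = \sum_{p \le x}\frac{\cos(2t\log p)}{p} + O(1),
\]
both valid provided $\log x \gg (\log t)^{2/3+\varepsilon}$. Taking the $(2,1)$-linear combination and applying the trigonometric inequality term-by-term yields
\[
2\log|\zeta(1+it)| + \log|\zeta(1+2it)| \;\ge\; -\tfrac32 \sum_{p \le x}\frac{1}{p} + O(1).
\]

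Next, by Mertens, $\sum_{p \le x}1/p = \log\log x + O(1)$, so choosing $x$ as small as allowed, namely $\log x = (\log t)^{2/3+\varepsilon}$, gives $\log\log x = (\tfrac23 + \varepsilon')\log\log t + O(1)$. Substituting, negating, and exponentiating produces
\[
|\zeta(1+it)|^{-2}|\zeta(1+2it)|^{-1} \;\ll\; (\log t)^{(3/2)(2/3) + \varepsilon} \;=\; (\log t)^{1+\varepsilon},
\]
which is the claim; the exponent $1+\varepsilon$ is precisely the numerical coincidence $\tfrac32 \cdot \tfrac23 = 1$.

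There is no substantial obstacle: \eqref{eq:logzetaapproximation} is already in hand, the trigonometric inequality is a one-line quadratic-in-$\cos\theta$ calculation, and Mertens is classical. The point worth flagging is that bounding each factor separately via \eqref{eq:VinogradovKorobovZeta} would yield only $(\log t)^{4/3 + 2/3 + \varepsilon} = (\log t)^{2+\varepsilon}$; the saving of $1$ in the exponent comes from the incompatibility -- captured by $2\cos\theta + \cos 2\theta \ge -\tfrac32$ -- between the primes that drive $|\zeta(1+it)|$ small (those with $\cos(t\log p) \approx -1$) and those that drive $|\zeta(1+2it)|$ small (those with $\cos(2t\log p) \approx -1$).
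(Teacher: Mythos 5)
Your proposal is correct and is essentially the paper's own proof: the paper likewise applies \eqref{eq:logzetaapproximation} with $\log x \asymp (\log t)^{2/3+\varepsilon}$ and then uses the identity $\tfrac32 + 2\cos\theta + \cos 2\theta = \tfrac12(1+2\cos\theta)^2 \ge 0$, which is the same quadratic-in-$\cos\theta$ inequality you wrote as $2(\cos\theta+\tfrac12)^2 - \tfrac32$. The only cosmetic difference is that you take real parts of \eqref{eq:logzetaapproximation} separately and invoke Mertens by name, while the paper manipulates $\log(|\zeta(1+it)|^{-2}|\zeta(1+2it)|^{-1})$ directly; the exponent computation $\tfrac32 \cdot \tfrac23 = 1$ is identical.
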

Lemma \ref{lemma:zetalemma} 
completes the proof of \eqref{eq:F0integralErrorTerm}.

Note that a naive application of the Vinogradov--Korobov bound would only give $(\log t)^{2+\varepsilon}$ in \eqref{eq:zetalemma}, leading to a bound on $E(x,T)$ that is worse than trivial.  

\begin{proof}[Proof of Lemma \ref{lemma:zetalemma}]
 Using \eqref{eq:logzetaapproximation}, we have
\begin{equation*}
\log\Big( |\zeta(1+it)|^{-2} |\zeta(1+2it)|^{-1} \Big) 
=
\sum_{p \leq x} \frac{-2 \cos(t \log p) - \cos(2t \log p)}{p} + O(1),
\end{equation*}
where $\log x  \asymp (\log t)^{2/3+\varepsilon}$.  Note that $-2 \cos \theta - \cos(2 \theta) \leq \frac{3}{2}$ for all $\theta \in \mr$, since $\frac32 + 2 \cos \theta + \cos(2 \theta) = \frac12(1+2 \cos \theta)^2$.  Thus
\begin{equation*}
 \log\Big( |\zeta(1+it)|^{-2} |\zeta(1+2it)|^{-1} \Big)
 \leq \tfrac{3}{2} \log \log x +O(1).
\end{equation*}
Simplifying completes the proof.
\end{proof}
One may apply many other inequalities in place of \eqref{eq:polynomialinequality}, leading to alternative forms of \eqref{eq:offdiagonal}.  We made no attempt to optimize the error term in \eqref{eq:F0integralErrorTerm}.

\subsection{The diagonal terms}
\label{section:diagonaltermsGeneralx}
According to \eqref{eq:MainTermDef}, write $M.T. = MT_{0} + MT_{x}$, say.  It suffices to study $MT_{x}$, since $MT_{0}$ is the special case $x=0$.
By a rearrangement, we have
\begin{equation*}
MT_{x} =  \frac{|\rho^*(1)|^2}{\cosh(\pi T)}
\sum_{n=1}^{\infty}
  \frac{|\tau_{iT}(n)|^2}{n} (e(2nx) + e(-2nx)) J(n,n).
\end{equation*}
Using the Dirichlet approximation theorem, we have
\begin{equation*}
 MT_{x} = \frac{|\rho^*(1)|^2}{\cosh( \pi T)} \sum_{n =1}^{\infty} \frac{|\tau_{iT}(n)|^2}{n} J(n,n)
 \Big[
 e\Big(\frac{an}{q} \Big) e(\theta n) + e\Big(\frac{-an}{q} \Big) e(-\theta n)
 \Big],
\end{equation*}
where $(a,q) = 1$, $q \leq Q$, and $|\theta q| \leq Q^{-1}$.  Here $Q$ is a parameter at our disposal to be chosen later.

Converting to Dirichlet characters via Lemma \ref{lemma:changeofbasis}, we obtain
\begin{equation*}
 MT_{x} = \frac{|\rho^*(1)|^2}{\cosh( \pi T)} 
 \sum_{d|q} \frac{1}{\phi(q/d)} 
\sum_{\pm}
 \sum_{\chi \shortmod{q/d}} \tau(\overline{\chi}) \chi(\pm a)
 \sum_{n =1}^{\infty} \frac{|\tau_{iT}(dn)|^2 \chi(n)}{dn} J(dn,dn)
  e(\pm \theta d n).
\end{equation*}
Let $J_{\pm \theta}(x) = e(\pm \theta x) J(x,x)$.  It follows from the definition \eqref{eq:Jdef} that
\begin{equation*}
 \frac{d^j}{dx^j} J(x,x) \ll x^{-j}.
\end{equation*}

Next, define the Mellin transform
\begin{equation*}
 \widetilde{J}_{\pm \theta} (u) = \int_0^{\infty}  J_{\pm \theta}(x) x^u \frac{dx}{x}.
\end{equation*}
Note
$\frac{d^j}{dx^j} J_{\pm \theta}(x) \ll x^{-j} (1+ |\theta T|^j)$.
Therefore, repeated integration by parts shows 
\begin{equation*}
|\widetilde{J}_{\pm \theta}(\sigma + iv)| \ll_{\sigma} T^{\sigma} \Big(1 + \frac{|v|}{1+|\theta T|}\Big)^{-100},
\end{equation*}
which we will use for $|\theta T| \ll T^{\varepsilon}$.
If $|\theta T| \gg T^{\varepsilon}$, then a stationary phase argument (see \cite[Proposition 8.2]{BKY}) shows
\begin{equation*}
 |\widetilde{J}_{\pm \theta} (\sigma + iv)| \ll \frac{T^{\sigma}}{|\theta T|^{1/2}} \Big(1 + \frac{|v|}{|\theta T|}\Big)^{-100}.
\end{equation*}
We may combine the two cases with the single bound
\begin{equation*}
 |\widetilde{J}_{\pm \theta} (\sigma + iv)| \ll \frac{T^{\sigma+\varepsilon}}{1+|\theta T|^{1/2}} \Big(1 + \frac{|v|}{T^{\varepsilon} + |\theta T|}\Big)^{-100}.
\end{equation*}

By the Mellin inversion formula, we have
\begin{equation*}
 MT_{x} =
\frac{|\rho^*(1)|^2}{\cosh( \pi T)} 
 \sum_{d|q} \frac{1}{\phi(q/d)} 
\sum_{\pm}
 \sum_{\chi \shortmod{q/d}} \tau(\overline{\chi}) \chi(\pm a) 
  \frac{1}{2 \pi i} \int_{(\varepsilon)} \widetilde{J}_{\pm \theta}(s) d^{-1-s} \mathcal{L}_d(1+s) ds,
\end{equation*}
where
\begin{equation}
\label{eq:LdsDef}
 \mathcal{L}_d(s) = \sum_{n =1}^{\infty} \frac{|\tau_{iT}(dn)|^2 \chi(n)}{n^{s}}.
\end{equation}
By a variation on \eqref{eq:Zformula}, one may readily derive that
\begin{equation*}
\mathcal{L}_d(s) = L(s, \chi)^2 L(s+iT, \chi) L(s-iT, \chi) A_d(s),
\end{equation*}
where $A_d(s)$ is given by an absolutely convergent Euler product for $\text{Re}(s) > 1/2$, satisfying a bound $A_d(s) \ll_{\sigma} d^{\varepsilon}$, for $\text{Re}(s) \geq \sigma > 1/2$.  Moreover, $\mathcal{L}_d(s)$ is analytic in this same region except for poles at $s=1, 1 \pm iT$ only when $\chi$ is the principal character.

Now let $MT_{x}'$ denote the contribution to $MT_{x}$ from $\chi$ nonprincipal.  To estimate $MT_{x}'$, we shift the contour to the line $-1/2 + \varepsilon$.  The bound we obtain from these terms is
\begin{multline*}
MT_x' \ll T^{\varepsilon} \sum_{\pm}
 \sum_{d|q} \frac{1}{\phi(q/d)} 
 \sum_{\chi \shortmod{q/d}} \sqrt{q/d} 
 \\
 \intR |\widetilde{J}_{\pm \theta} (-1+\sigma+ iv)| 
  |L(\sigma + iv, \chi)^2 L(\sigma + iv+iT, \chi) L(\sigma + iv-iT, \chi)| dv.
\end{multline*}
By H\"older's inequality and Proposition \ref{prop:JM}, we deduce
\begin{equation}
\label{eq:CriticalLineBoundFromJutilaMotohashi}
MT_{x}' \ll  T^{-1/2+\varepsilon}
  q^{1/2} (T^{2/3} + |\theta T|)^{1/2} \ll (T^{-1/6} Q^{1/2}  + Q^{-1/2}) T^{\varepsilon}.
\end{equation}
Any choice of $Q$ with $T^{\delta} \ll Q \ll T^{1/3-\delta}$, for some $\delta > 0$, shows $MT_x' \ll T^{-\delta'}$, for some $\delta'>0$.

Finally, consider the contribution from $\chi = \chi_0$, denoted $MT_x^{0}$.  Using $\tau(\chi_0) = \mu(q/d)$, we see that
\begin{equation}
\label{eq:MTx0}
MT_x^{0} = \frac{|\rho^*(1)|^2}{\cosh( \pi T)} 
\sum_{\pm}
  \frac{1}{2 \pi i} \int_{(\varepsilon)} \widetilde{J}_{\pm \theta} (s) Z_q(1+s) ds,
\end{equation}
where
\begin{equation*}
Z_q(s) = \sum_{n=1}^{\infty} \frac{|\tau_{iT}(n)|^2}{n^{s}} \frac{\mu(q/(n,q))}{\varphi(q/(n,q))}.
\end{equation*}
By \eqref{eq:Zformula}, 
we derive
\begin{equation*}
 Z_q(s) 
 = \frac{\zeta^2(s) \zeta(s+iT) \zeta(s-iT)}{\zeta(2s)}
 B_q(s,T),
\end{equation*}
where
\begin{equation}
\label{eq:BqDef}
 B_q(s,T) = \prod_{p^{q_p} || q} \frac{\sum_{j=0}^{\infty} 
\frac{|\tau_{iT}(p^j)|^2}{p^{js}} \frac{\mu(p^{q_p}/(p^j,p^{q_p}))}{\varphi((p^{q_p}/(p^j,p^{q_p}))}
}{\sum_{j=0}^{\infty} 
\frac{|\tau_{iT}(p^j)|^2}{p^{js}} }.
\end{equation}

Now we shift the contour to $\sigma = -1/2 + \varepsilon$, picking up residues at $s=0$ as well as $\pm iT$.
The poles at $\pm iT$ give small residues, due to the decay of $\widetilde{J_{\pm \theta}}(s)$ at this height (since $|\theta T| \ll T^{1-\varepsilon}$,  which is implied by $Q \gg T^{\delta}$).  The contribution on the new contour is obviously no larger than the one obtained with \eqref{eq:CriticalLineBoundFromJutilaMotohashi}.  
This discussion now shows
\begin{equation*}
MT_x = \frac{|\rho^*(1)|^2}{\cosh( \pi T)} 
  \text{Res}_{s=0} \Big( \sum_{\pm} \widetilde{J_{\pm \theta}}(s) Z_q(1+s) \Big)
+  O(T^{-1/6} Q^{1/2}  + Q^{-1/2}) T^{\varepsilon}).
\end{equation*}

\subsection{Evaluation of the residue}
\label{section:residue}
First we examine the Dirichlet series aspect of the residue.  By the Vinogradov--Korobov bound (Lemma \ref{lemma:VinogradovKorobov}), we have
\begin{equation*}
\frac{\zeta^2(s) \zeta(s+iT) \zeta(s-iT)}{\zeta(2s)} = \frac{|\zeta(1+2iT)|^2}{\zeta(2)} \Big(\frac{1}{s^2} + \frac{O((\log T)^{2/3+\varepsilon})}{s} + \dots\Big).
\end{equation*}
Of course, $B_q(s,T) = B_q(1,T) + B_q'(1,T) s + O(s^2)$, and a calculation shows $B_q(1,T)$ and $B_q'(1,T)$ are $O(q^{-1+\varepsilon})$.
Finally, we claim that $\widetilde{J_{\theta}}(0) = O(\log \log T)$, and
\begin{equation}
\label{eq:JMellinResidueCalculation}
\sum_{\pm} \widetilde{J}_{\pm \theta}'(0) = \frac{\pi^2}{2}  \log T \int_0^{\infty} \psi^2(y) J_0\Big(\frac{\theta T}{y}\Big) \frac{dy}{y} + O(\eta^{1/2} \log T).
\end{equation}
Taking these claims on $\widetilde{J}_{\pm \theta}$ for granted, we may then derive
\begin{equation}
\label{eq:MTx}
MT_{x} = \frac{\pi^2}{2} \frac{|\rho^*(1)|^2}{\cosh(\pi T)} \frac{|\zeta(1+2iT)|^2}{\zeta(2)}  B_q(1,T)  \log T \int_0^{\infty} \psi^2(y) J_0\Big(\frac{\theta T}{y}\Big) \frac{dy}{y} + O(\eta^{1/2} \log T),
\end{equation}
where we have used $\eta^{1/2} \log T \gg (\log T)^{2/3+\varepsilon}$.  Evaluation of the constant shows this is consistent with \eqref{eq:MTasymptotic}.

Now we prove the claims on $J_{\theta}$.
We have
\begin{equation}
\label{eq:Jtheta'tilde0def}
 \widetilde{J}_{\theta}'(0) = \int_0^{\infty} e(\theta x) J(x,x) \log x \frac{dx}{x},
\end{equation}
and likewise $\widetilde{J_{\theta}}(0)$ is given by the same integral, but without the $\log x$ factor.  Trivially bounding the integral shows $J_{\theta}(0) = O(\log \log T)$, since $J(x)$ is supported on $\eta^{1/2} T \ll x \ll T$, and $\log(1/\eta) \ll \log \log T$.

Moreover, $\log x = \log T + O(\log \log T)$, on the support of $J(x)$.  Then inserting the definition of $J$ and applying some changes of variables, we obtain 
\begin{equation*}
  \widetilde{J}_{\theta}'(0) = \log T \int_0^{\infty} \psi^2(y) 
 \Big[ \intR W(t) |\gamma_{V_T}(1/2+it)|^2 \cosh( \pi T) e^{i \theta \frac{\sqrt{T^2-t^2}}{y}} \frac{dt}{2\pi} \Big]
  \frac{dy}{y},
\end{equation*}
plus an error of size $O(\log \log T)$.

Stirling's formula implies $\cosh(\pi T) |\gamma_{V_T}(1/2+it)|^2 = \frac{\pi^2}{2} (T^2-t^2)^{-1/2} ( 1+O(T^{-1+\varepsilon}))$, for $t$ in the support of $W$.
Hence
\begin{equation}
\label{eq:JthetaMellinPrimeAtZero}
\widetilde{J}_{\theta}'(0) = \frac{\pi^2}{2} \log T \int_0^{\infty} \psi^2(y) 
 \Big[ \intR W(t) (T^2-t^2)^{-1/2}  e^{i \theta \frac{\sqrt{T^2-t^2}}{y}} \frac{dt}{2 \pi} \Big] \frac{dy}{y} + O(\log \log T).
\end{equation}
It is not difficult to see that
\begin{equation*}
\intR W(t) (T^2-t^2)^{-1/2}  e^{i \theta \frac{\sqrt{T^2-t^2}}{y}} dt
= \int_{-T}^{T} (T^2-t^2)^{-1/2}  e^{i \theta \frac{\sqrt{T^2-t^2}}{y}} dt + O(\eta^{1/2} \log T).
\end{equation*}
By the change of variables $t = T \cos(\varphi)$ and \cite[8.411.1]{GR}, we have
\begin{equation*}
\int_{-T}^{T} (T^2-t^2)^{-1/2}  \cos\Big(\theta \frac{\sqrt{T^2-t^2}}{y}\Big) dt
= \pi J_0\Big(\frac{\theta T}{y}\Big).
\end{equation*}

Putting all the calculations together, we derive \eqref{eq:JMellinResidueCalculation}, which completes the proof of Proposition \ref{prop:bulkrangeIntro}.

\section{Large values of $\Delta$ via shifted divisor sum}
\label{section:spectralapproach}
Having extracted the main term in Section \ref{section:largeDelta}, we may make a small notational simplification by restricting attention to $t \geq 0$ only.  The same bound holds for $t<0$ by symmetry.

The goal of this section is to prove Proposition \ref{prop:biggisht}.
We begin with some simplifications of a general nature.
\begin{mylemma}
\label{lemma:F0integralSimplified}
 Suppose that $T^{\delta} \ll \Delta \leq \eta(T) T$, where $0 < \eta(T) < \delta$ for some small $\delta > 0$.  Then
 \begin{equation}
  \int_{\frac12 \Delta \leq T - t \leq \Delta} |F_0(it)|^2 dt \ll  \frac{|\zeta(1+2iT)|^{-2}}{\sqrt{\Delta T}} I(\Delta, T, x,  N) + T^{-100},
 \end{equation}
where
\begin{equation}
\label{eq:IDeltaTxNdef}
I(\Delta, T,x, N) = \intR w_1\Big(\frac{T-t}{\Delta}\Big) \Big|\sum_{n \geq 1}
\frac{\tau_{iT}(n)
e(nx)}{n^{1/2 + it}} w_2
\Big(\frac{n}{N} \Big)\Big|^2 dt, \qquad N = \sqrt{\Delta T},
\end{equation}
and $w_1, w_2$ are certain fixed smooth weight functions with compact support on the positive reals.
\end{mylemma}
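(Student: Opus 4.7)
By Lemma \ref{lemma:FitF0approximation}, I may replace $F$ by $F_0$ on the range of integration at the cost of the $T^{-100}$ error. I then split the Fourier sum in \eqref{eq:F0def} as $S_+(t) + S_-(t)$ according to the sign of $n$, and apply $|S_+ + S_-|^2 \leq 2|S_+|^2 + 2|S_-|^2$. The change of variables $n \mapsto -n$ shows that $|S_-|^2$ has the same shape as $|S_+|^2$ with $x$ replaced by $-x$; since the quantity $I$ and all subsequent estimates are insensitive to the sign of $x$, it suffices to treat $S_+$. For the prefactor, \eqref{eq:rhosize} combined with Stirling (i.e., \eqref{eq:gammaVStirling}) gives, uniformly for $T-t \asymp \Delta$,
\[
|\rho^*(1)|^2\, |\gamma_{V_T}(1/2+it)|^2 \ll \frac{1}{|\zeta(1+2iT)|^2 \sqrt{\Delta T}},
\]
which matches the prefactor on the right-hand side.

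The heart of the argument is to replace the joint cutoff $\psi(\sqrt{T^2-t^2}/(2\pi n))$ with an $n$-only cutoff $w_2(n/N)$. Setting $\phi(y):=\psi(1/y)$, the cutoff equals $\phi(2\pi n/\sqrt{T^2-t^2})$; since $\phi$ is smooth with compact support bounded away from $0$, and $\sqrt{T^2-t^2} \asymp N$ when $T-t \asymp \Delta$, only $n \asymp N$ contributes. I may therefore insert a fixed smooth bump $w_2(n/N)$ equal to $1$ on this range without changing the sum. Mellin inversion then gives
\[
\phi\!\left(\frac{2\pi n}{\sqrt{T^2-t^2}}\right) = \frac{1}{2\pi i}\int_{(0)} \widetilde\phi(-u)\!\left(\frac{2\pi n}{\sqrt{T^2-t^2}}\right)^{u} du,
\]
with rapid decay in $|\operatorname{Im} u|$. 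Substituting and interchanging (legitimate once $w_2$ renders the $n$-sum finite), then applying Cauchy--Schwarz to the $u$-integral against the $L^1$-weight $|\widetilde\phi(-u)|$, I obtain
\[
|S_+(t)|^2 \ll \int \bigl|\widetilde\phi(-u)\bigr|\, |D(t-\operatorname{Im} u;\, x)|^2\, |du|,
\]
where $D(\tau;x) := \sum_{n\geq 1} \tau_{iT}(n)\,e(nx)\,w_2(n/N)\,n^{-1/2-i\tau}$ and I have used $|(2\pi/\sqrt{T^2-t^2})^u|=1$ on $\operatorname{Re} u = 0$.

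Finally, I majorize $\mathbf{1}_{[\Delta/2,\Delta]}(T-t)$ by a fixed smooth bump $w_1((T-t)/\Delta)$ and integrate in $t$. Changing variables $\tau = t-\operatorname{Im} u$ converts the inner $t$-integral into $\int |D(\tau;x)|^2\, w_1((T-\operatorname{Im} u-\tau)/\Delta)\, d\tau$. Since $\widetilde\phi$ has rapid decay in $|\operatorname{Im} u|$, the translations induced by $\operatorname{Im} u$ can be absorbed into a slightly enlarged but still fixed smooth majorant, identified with the $w_1$ appearing in the definition of $I(\Delta,T,x,N)$. Bounding $\int|\widetilde\phi(-u)|\,|du| = O(1)$ and collecting the estimates yields the claimed bound. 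The main obstacle is the Mellin-decoupling step: the joint cutoff must be disentangled without destroying the $L^2$ structure of the integral, and the preliminary insertion of $w_2(n/N)$ is essential both to make the interchange of integrals absolutely convergent and to produce the clean Dirichlet-polynomial shape required for $I$.
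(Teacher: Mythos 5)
Your argument follows the same route as the paper: Stirling for the prefactor, splitting $n>0$ from $n<0$ with the elementary $|S_++S_-|^2\leq 2|S_+|^2+2|S_-|^2$, inserting a redundant bump $w_2(n/N)$, Mellin inversion of the $\psi$-cutoff, Cauchy--Schwarz on the $u$-line, and a change of variables in $t$. (Replacing $F$ by $F_0$ is unnecessary here since the lemma is already stated for $F_0$.)

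The one genuine gap is in the last step. You claim that ``the translations induced by $\operatorname{Im} u$ can be absorbed into a slightly enlarged but still fixed smooth majorant,'' but this is only possible for $|\operatorname{Im} u| \ll \Delta$. For $|\operatorname{Im} u| \gg \Delta$ the translated interval $T-\operatorname{Im} u-\tau \asymp \Delta$ lies entirely outside the support of any fixed $w_1((T-\tau)/\Delta)$ of compact support, so no uniformly enlarged majorant can absorb it; the integrand $|D(\tau;x)|^2$ is nonnegative, and the contribution from those $\tau$ is not controlled by $I(\Delta,T,x,N)$. The correct remedy, which is what the paper does, is to truncate the $u$-integral at $|u|\leq \Delta/100$ \emph{before} the change of variables $\tau=t-\operatorname{Im} u$; the tail contributes $O(T^{-100})$ by the rapid decay of $\widetilde{\psi}$ together with the hypothesis $\Delta\gg T^{\delta}$, and then the remaining bounded shifts can indeed be majorized by a single fixed $w_1$. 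As a sanity check, notice that your sketch never uses $\Delta \gg T^{\delta}$ and never accounts for the $T^{-100}$ error term in the statement, which are both produced precisely by this truncation. Adding that step makes your proof essentially identical to the paper's.
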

\begin{proof}
The essential part of the proof is a separation of variables argument.  We will need variations on this argument in other parts of the paper, so we provide full details here.

Using Stirling's
approximation \eqref{eq:gammaVStirling}, and the inequality
$|\sum_{n \neq 0} a_n |^2 \leq 2|\sum_{n > 0} a_n |^2 + 2|\sum_{n > 0} a_{-n}
|^2$, we obtain
\begin{equation*}
 \int \limits_{\frac12 \Delta \leq T-t \leq \Delta} |F_0(it)|^2 dt
 \ll \frac{|\zeta(1+2iT)|^{-2}}{\sqrt{\Delta T}} 
 \int \limits_{\frac12 \Delta \leq T-t \leq \Delta} \Big|\sum_{n \geq 1}
\frac{\tau_{iT}(n)
e(nx)}{n^{1/2 + it}} \psi
\Big(\frac{\sqrt{T^2-t^2}}{2 \pi n} \Big)\Big|^2 dt.
\end{equation*}
The support on $\psi$ means that the $n$-sum is supported on $n \asymp N$, where
\begin{equation}
\label{eq:Ndef}
N = \sqrt{\Delta T}.
\end{equation}
This integral is not yet quite in the form $I(\Delta, T, x, N)$.
For this,
we multiply $\psi$ by a redundant factor $w_2(n/N)$, such that $w_2(n/N) = 1$ for all $n$ in the support of $\psi$, yet such that $w_2$ is smooth of compact support.  Then we use  $\psi(y) = \frac{1}{2\pi i} \int_{(0)} \widetilde{\psi}(-u) y^u du$, and apply the Cauchy--Schwarz inequality to the $u$-integral, obtaining
\begin{equation}
\label{eq:psiseparation1}
 \Big| \sum_n a_n \psi
\Big(\frac{\sqrt{T^2-t^2}}{2 \pi n} \Big)\Big|^2 \ll \intR |\widetilde{\psi}(iu) | \Big| \sum_n a_n n^{-iu} \Big|^2 du,
\end{equation}
using the fact that $\intR |\widetilde{\psi}(iv)| dv \ll 1$ (the reader may recall that $\psi$ was fixed at the beginning of the paper and is independent of all relevant parameters).

In our application, we may truncate the $u$-integral at $|u| \leq \frac{\Delta}{100}$, leading to an error that is at most $O(T^{-100})$, using the rapid decay of $\widetilde{\psi}$, and the fact that $\Delta \gg T^{\delta}$.  That is,
\begin{multline*}
 \int \limits_{\frac12 \Delta \leq T-t \leq \Delta} \Big|\sum_{n \geq 1}
\frac{\tau_{iT}(n)
e(nx)}{n^{1/2 + it}} \psi
\Big(\frac{\sqrt{T^2-t^2}}{2 \pi n} \Big)\Big|^2 dt 
\\
\ll 
\int \limits_{\frac12 \Delta \leq T-t \leq \Delta}
\thinspace
\int \limits_{|u| \leq \frac{\Delta}{100}} |\widetilde{\psi}(-iu)|
\Big|\sum_{n \geq 1}
\frac{\tau_{iT}(n)
e(nx)}{n^{1/2 + it+iu}} w_2\Big(\frac{n}{N}\Big)\Big|^2 dt du + O(T^{-100}).
\end{multline*}
Next we change variables $t \rightarrow t-u$ and over-extend the $t$-range of integration to $\frac{49 }{100} \Delta \leq |t-T| \leq \frac{101 }{100} \Delta$, which is valid due to the positivity of the integrand.  In this way the variables $u$ and $t$ are separated, and the $u$-integral can be bounded again using $\intR |\widetilde{\psi}(iv)| dv \ll 1$.

Finally, by attaching a smooth weight function $w_1(\frac{T-t}{\Delta})$ supported on the positive reals, we obtain the desired result.
\end{proof}

\begin{proof}[Proof of Proposition \ref{prop:biggisht}]
The basic approach is similar to that of Proposition \ref{prop:bulkrangeIntro}, in that
we will solve a shifted divisor problem.  
In light of Lemma \ref{lemma:F0integralSimplified}, we need to show
\begin{equation}
\label{eq:biggisht2}
 I(\Delta, T, x, N) \ll |\zeta(1+2iT)|^2(\Delta  \log T + N T^{-\delta'} + N S_{\Delta}),
\end{equation}
for $\Delta \gg T^{25/27+\varepsilon}$, and some $\delta'>0$.  
Opening the square and integrating in $t$, we obtain
\begin{equation}
\label{eq:IDeltaTxNFormula}
 I(\Delta, T, x, N) = \sum_{m,n \geq 1} \tau_{iT}(m) \tau_{iT}(n)
e((m-n)x) \frac{ w_2\big(\frac{n}{N}\big) w_2\big(\frac{m}{N}\big)}{\sqrt{mn}} \intR w_1\Big(\frac{T-t}{\Delta}\Big) \Big(\frac{n}{m}\Big)^{it} dt.
\end{equation}
By Lemma \ref{lemma:diagonaltermsum}, the terms with $m=n$
contribute to $I(\Delta, T, x, N)$ an amount
\begin{equation*}
 \ll \Delta |\zeta(1+2iT)|^2 \log T,
\end{equation*}
which leads to the first bound in \eqref{eq:biggisht2}.

Our main job is to bound the terms with $m\neq n$, in which case we set $m = n +h$.  By simple
estimations, these terms contribute to $I(\Delta, T,x,N)$ an
amount 
\begin{equation*}
 \frac{\Delta}{N} \sum_{h \neq 0} e(hx) \sum_{n} \tau_{iT}(n) \tau_{iT}(n+h) K(n,h),
\end{equation*}
where 
\begin{equation*}
K(n,h) = 
\frac{w_2(\frac{n}{N})}{(\frac{n}{N})^{1/2}} 
\frac{w_2(\frac{n+h}{N})}{(\frac{n+h}{N})^{1/2}}
\Big(\frac{n+h}{n}\Big)^{-iT} 
\widehat{w_1}\Big(-\frac{\Delta}{2\pi} \log\Big(\frac{n+h}{n}\Big)\Big).
\end{equation*}
In particular, $K(n,h)$ is
supported on $n \asymp N$ and satisfies
\begin{equation*}
 \frac{\partial^{j}}{\partial u^j} K(u,v) \ll R^j N^{-j} 
\Big(1 + \frac{\Delta |v|}{N}\Big)^{-A}, \qquad j =0,1,2,\dots,
\end{equation*}
with $A  > 0$ arbitrary, and where
\begin{equation}
 \label{eq:Rdef}
 R = \Delta^{-1} T.
\end{equation}
Proposition \ref{prop:SCS} gives
\begin{equation}
\label{eq:YoungThm8.1basicapplication}
 \frac{\Delta}{N} \sum_{h \neq 0} e(hx) \Big(\sum_n \tau_{iT}(n) \tau_{iT}(n+h) K(n,h) 
 - M.T. \Big) \ll T^{\frac13+\varepsilon} N^{\frac12} R^2 +
T^{\frac16+\varepsilon} N^{\frac34} R^{\frac12},
\end{equation}
where $M.T.$ is a main term (off-diagonal) to be discussed presently.
A short calculation shows that the two error terms in \eqref{eq:YoungThm8.1basicapplication} are
$O((\Delta T)^{1/2} T^{-\delta})$ for $\Delta \gg T^{\frac{25}{27} +
\varepsilon}$.  It turns out that the main terms are surprisingly difficult to estimate with uniformity in $x$, and we next turn to this problem.

The off-diagonal main term is defined by
\begin{equation*}
M.T. = \sum_{\pm} \frac{|\zeta(1+2iT)|^2}{\zeta(2)} \sigma_{-1}(h) \int_{\max(0,-h)}^{\infty} 
(y+h)^{\mp iT} y^{\pm iT} K(y,h) dy.
\end{equation*}
These two terms then give to $I(\Delta, T, x, N)$ an amount
\begin{multline*}
\frac{|\zeta(1+2iT)|^2}{\zeta(2)} \sum_{\pm} \frac{\Delta}{N} \sum_{h \neq 0} e(hx) \sigma_{-1}(h) 
\\
\int_{0}^{\infty} 
 \frac{w_2(\frac{y}{N})}{(\frac{y}{N})^{1/2}} 
\frac{w_2(\frac{y+h}{N})}{(\frac{y+h}{N})^{1/2}}
\Big(\frac{y+h}{y}\Big)^{-iT\mp iT}
\widehat{w_1}\Big(-\frac{\Delta}{2\pi} \log\Big(\frac{y+h}{y}\Big)\Big) dy.
\end{multline*} 
When the choice of sign makes the exponent of $(y+h)/y$ be $-2iT$, then a simple integration by parts argument shows that
\begin{equation*}
\int_{0}^{\infty} 
  \frac{w_2(\frac{y}{N})}{(\frac{y}{N})^{1/2}} 
\frac{w_2(\frac{y+h}{N})}{(\frac{y+h}{N})^{1/2}}
\Big(\frac{y+h}{y}\Big)^{-2 iT} 
\widehat{w_1}\Big(-\frac{\Delta}{2\pi} \log\Big(\frac{y+h}{y}\Big)\Big) dy \ll N \Big(1 + \frac{|h|T}{N}\Big)^{-10}.
\end{equation*}
Hence this off-diagonal term contributes to $I(\Delta, T, x, N)$ an amount that is
\begin{equation*}
\ll |\zeta(1+2iT)|^2 \frac{\Delta}{N} \sum_{h \neq 0} N \Big(1 + \frac{|h|T}{N}\Big)^{-10} \ll |\zeta(1+2iT)|^2 \Delta \frac{N}{T} \ll |\zeta(1+2iT)|^2 \Delta,
\end{equation*}
since $N \ll T$, which is more than satisfactory for \eqref{eq:biggisht2}.  The other main term is more subtle, and gives
\begin{equation}
\label{eq:MTODdef}
MT_{OD} := \frac{|\zeta(1+2iT)|^2}{\zeta(2)}  \frac{\Delta}{N} \sum_{h \neq 0} e(hx) \sigma_{-1}(h) 
\int_{0}^{\infty} 
 \frac{w_2(\frac{y}{N})}{(\frac{y}{N})^{1/2}} 
\frac{w_2(\frac{y+h}{N})}{(\frac{y+h}{N})^{1/2}}
\widehat{w_1}\Big(-\frac{\Delta}{2\pi} \log\Big(\frac{y+h}{y}\Big)\Big) dy.
\end{equation}
We will show here
\begin{equation}
\label{eq:MTODbound}
 MT_{OD} \ll |\zeta(1+2iT)|^2 (\Delta \log T + N S_{\Delta}),
\end{equation}
where $S_{\Delta}$ is a function satisfying \eqref{eq:SDeltaProperty}.
It is quite easy to show with a trivial bound that $MT_{OD} \ll |\zeta(1+2iT)|^2 N$, so \eqref{eq:MTODbound} amounts to a logarithmic savings with the sum over $\Delta$ in dyadic segments (there are of course $O(\log T)$ such values of $\Delta$ over which to sum).  As a first simple approximation step, by taking Taylor expansions we have that
\begin{multline*}
 \int_{0}^{\infty} 
 \frac{w_2(\frac{y}{N})}{(\frac{y}{N})^{1/2}} 
\frac{w_2(\frac{y+h}{N})}{(\frac{y+h}{N})^{1/2}}
\widehat{w_1}\Big(-\frac{\Delta}{2\pi} \log\Big(\frac{y+h}{y}\Big)\Big) dy
\\
= \int_0^{\infty} \frac{w_2(\frac{y}{N})^2}{(\frac{y}{N})} 
\widehat{w_1}\Big(- \frac{\Delta h}{2 \pi y}\Big) dy + O\Big(|h| \Big(1 + \frac{|h|\Delta}{N}\Big)^{-100}\Big),
\end{multline*}
and hence by a trivial estimate on the $h$-sum, we have
\begin{multline}
\label{eq:MTODmainterm}
 MT_{OD} = \frac{|\zeta(1+2iT)|^2}{\zeta(2)} \Delta \sum_{h \neq 0} e(hx) \sigma_{-1}(h) 
\int_0^{\infty} \frac{w_2(y)^2}{y}  
\widehat{w_1}\Big(-\frac{\Delta h}{2\pi N y}\Big) dy 
\\
+ O\Big(\frac{N}{\Delta}|\zeta(1+2iT)|^2 \log T\Big).
\end{multline}
This error term is certainly $O(NT^{-\delta})$, since $\Delta \gg T^{25/27+\varepsilon}$.
By changing variables, we have 
\begin{equation*}
 MT_{OD} \ll |\zeta(1+2iT)|^2 N \int_0^{\infty} w_2
 \Big(\frac{y}{2\pi}\Big)^2 
 \Big| \frac{\Delta}{Ny}
 \sum_{h \neq 0} \sigma_{-1}(h) e(hx) \widehat{w_1}(-\frac{\Delta h}{N y})
 \Big| dy
+ O(NT^{-\delta}).
 \end{equation*}
 We need to control the sum over $h$, on average over $\Delta$ running over dyadic segments.  We pause our estimations of $MT_{OD}$ to state the following. 
 \begin{mylemma}
 \label{lemma:QxH}
 Let $w_1$ be a smooth function supported on the positive reals, and 
for $x \in \mr$ and $H \gg 1$, define
\begin{equation}
 R(x,H) = H^{-1} \sum_{h \neq 0} \sigma_{-1}(h) e(hx) \widehat{w_1}(h/H).
\end{equation}
Then
\begin{equation}
\label{eq:QsummedDyadically}
 \sum_{\substack{1 \ll H < \infty \\ H \text{dyadic}}} |R(x,H)| \ll 1,
\end{equation}
where the implied constant depends only on $w_1$ and is uniform in $x$.
\end{mylemma}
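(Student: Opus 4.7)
The plan is to open the divisor function $\sigma_{-1}$, apply Poisson summation to the resulting inner sum, and then exchange the order of summation over the dyadic parameter $H$ and the divisor index to obtain the bound uniformly in $x$.

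Writing $\sigma_{-1}(h) = \sum_{d \mid |h|} d^{-1}$ and parametrizing $h = \pm d n$ with $d \geq 1$ and $n \in \mz \setminus \{0\}$, we have
\[
R(x,H) \;=\; \frac{1}{H} \sum_{d \geq 1} \frac{1}{d} \sum_{n \neq 0} e(dnx) \, \widehat{w_1}(dn/H).
\]
For each fixed $d$, set $H_d := H/d$. Poisson summation on $n$ gives
\[
\sum_{n \in \mz} e(dnx) \, \widehat{w_1}(dn/H) \;=\; H_d \sum_{k \in \mz} w_1\bigl( H_d(dx-k) \bigr),
\]
and subtracting the $n=0$ term produces the corresponding identity for the sum over $n \neq 0$.

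Next, I would split the $d$-sum at $d = H$. For $d > H$, the rapid decay $|\widehat{w_1}(y)| \ll_A (1+|y|)^{-A}$ yields $\sum_{n \neq 0} |\widehat{w_1}(dn/H)| \ll (H/d)^A$ for any $A > 1$, contributing $O(H^{-1})$ to $|R(x,H)|$. For $d \leq H$, the Poisson identity gives
\[
|R(x,H)| \;\leq\; \sum_{d \leq H} \frac{1}{d^2} \sum_{k \in \mz} \bigl| w_1\bigl( H_d(dx-k) \bigr) \bigr| \;+\; O\!\left( \frac{\log H}{H} \right).
\]

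Now sum over dyadic $H \gg 1$. The error terms aggregate to $O(1)$ because $\sum_{H \text{ dyadic}} H^{-1} \log H \ll 1$. For the main term, swapping the order of summation yields
\[
\sum_{d \geq 1} \frac{1}{d^2} \sum_{k \in \mz} \sum_{\substack{H \text{ dyadic} \\ H \geq d}} \bigl| w_1\bigl( H_d(dx-k) \bigr) \bigr|.
\]
Since $w_1$ has compact support in some interval $[A, B] \subset (0, \infty)$, the summand vanishes unless $H_d(dx-k) \in [A, B]$. For fixed $d$ and $k$ with $y := dx - k > 0$, this forces $H \in [Ad/y,\, Bd/y]$, an interval containing only $O(1)$ dyadic values; moreover $H \geq d$ forces $y \leq B$, so only $O(1)$ integer $k$ contribute. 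Hence the inner double sum is $O(1)$ uniformly in $d$, and summing against $d^{-2}$ yields $\sum_H |R(x,H)| \ll 1$, with implied constant depending only on $w_1$.

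The principal difficulty is this final counting step. The analysis up through the split at $d = H$ only yields $|R(x,H)| \ll 1$ per dyadic value, which would give an unwanted $O(\log T)$ upon summing the $\log T$ dyadic values. The essential logarithmic saving comes from the geometric observation that, for fixed $d$ and $k$, only $O(1)$ dyadic values of $H$ can place $H_d(dx-k)$ inside the compact support of $w_1$; this is what renders the exchanged sum convergent.
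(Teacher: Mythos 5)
Your proof is correct and follows essentially the same approach as the paper's: open $\sigma_{-1}$ as a Dirichlet convolution, truncate the divisor sum, apply Poisson summation to the complementary variable, and use the compact support of $w_1$ inside $(0,\infty)$ to show that each divisor contributes to only $O(1)$ dyadic values of $H$ after swapping the order of summation. The paper truncates at $a \leq \varepsilon H$ (guaranteeing at most one Poisson frequency per pair $(a,H)$) while you truncate at $d \leq H$ (allowing $O(1)$ frequencies $k$), but this is an immaterial cosmetic difference.
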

Remark.  The assumption that $w_1$ is supported on the positive reals, so in particular $w_1(0) = 0$,  is used crucially in the proof.  More generally, if $w_1$ is some fixed Schwartz-class function, then it is not hard to see that $R(x,H) \ll_{w_1} 1$.

Lemma \ref{lemma:QxH} suffices to show \eqref{eq:MTODbound}, as $H = y \frac{N}{\Delta} = y \frac{T^{1/2}}{\Delta^{1/2}}$, and $\Delta$ runs over dyadic segments.  This means that $H$ can be partitioned into two classes of dyadic segments (taking even powers and odd powers separately, for example).

Having accounted for all the terms, the proof of  Proposition \ref{prop:biggisht} is complete.
\end{proof}

\begin{proof}[Proof of Lemma \ref{lemma:QxH}]
We open $\sigma_{-1}(h)$ as a Dirichlet convolution, obtaining
\begin{equation*}
R(x,H) =  
H^{-1}\sum_{a \geq 1} a^{-1} 
\sum_{b \neq 0} e(abx) 
\widehat{w_1}\Big(\frac{ab}{H}\Big).
\end{equation*}
For larger values of $a$, say $a > K \geq 1$, we estimate trivially, obtaining
\begin{equation*}
H^{-1} \sum_{a > K} a^{-1} \sum_{b \neq 0} e(abx) 
\widehat{w_1}\Big(\frac{ab}{H}\Big) \ll \frac{1}{K}.
\end{equation*}
For $a \leq K$, we apply Poisson summation, getting
\begin{equation*}
H^{-1} \sum_{a \leq K} a^{-1} \sum_{b \neq 0} e(abx) 
\widehat{w_1}\Big(\frac{ab}{H}\Big) = \sum_{a \leq K} a^{-1} \Big( \frac{-\widehat{w_1}(0)}{H} + 
\frac{1}{a} \sum_{\nu \in \mz} w_1\Big(H\big(x-\frac{\nu}{a}\big)\Big) 
\Big).
\end{equation*}
Thus
\begin{equation*}
 R(x,H) =  \sum_{a \leq K} a^{-2} \sum_{\nu \in \mz} w_1\Big(H\big(x-\frac{\nu}{a}\big)\Big) + O\Big(K^{-1} + \frac{\log (eK)}{H} 
\Big).
\end{equation*}
Suppose that $w_1$ is supported on $[\alpha, \beta]$ where $0 < \alpha < \beta$.
If we assume that $K \leq \varepsilon H$ for $\varepsilon^{-1} > 2\beta$, then the sum over $\nu$ will capture at most one value of $\nu$, which must be the integer nearest $ax$.  
More precisely, we have that the sum over $\nu$ vanishes unless $\alpha \leq \|ax\| \frac{H}{a} \leq \beta$, where $\|ax \|$ denotes the distance from $ax$ to the nearest integer.  For each $a$, there are at most $1 + \log_2(\beta/\alpha)$ dyadic values of $H$ such that the sum over $\nu$ does not vanish.  Thus
\begin{equation}
\label{eq:w1nuHdyadicsumbound}
\sum_{\substack{H \gg \varepsilon^{-1} \\ H \text{ dyadic}}} \sum_{\nu \in \mz} \Big| w_1\Big(H\big(x-\frac{\nu}{a}\big)\Big) \Big| \ll 1. 
\end{equation}
We set $K =  \varepsilon H$, and by reversal of order of summation combined with \eqref{eq:w1nuHdyadicsumbound}, we derive
\begin{equation*}
\sum_{\substack{H \gg \varepsilon^{-1} \\ \text{dyadic} }} \sum_{a \leq \varepsilon H} a^{-2} \sum_{\nu \in \mz} \Big| w_1\Big(H\big(x-\frac{\nu}{a}\big)\Big) \Big|
\ll \sum_{a=1}^{\infty} a^{-2} \ll 1.
\end{equation*}
For $H \ll \varepsilon^{-1}$ we use only the trivial bound $Q(x, H) \ll 1$, which needs to be used $O_{\varepsilon}(1)$ times.
Thus we obtain that \eqref{eq:QsummedDyadically} holds.
\end{proof}

\section{The medium sizes of $\Delta$}
\label{section:smallmediumDelta}

\subsection{Statement of results}
The goal of this section is to collect some results which together prove Proposition \ref{prop:FboundmediumDelta}.  
Recall $I(\Delta, T, x, N)$ is defined by \eqref{eq:IDeltaTxNdef}.  One of the main results of this section is
\begin{myprop}
\label{prop:smallt}
 Suppose $T^{\delta} \ll \Delta \ll T^{1-\delta}$, for some $\delta > 0$.  Then 
\begin{equation}
I(\Delta, T, x, N) \ll N |\zeta(1+2iT)|^2 ((\log T)^{-50} +  S_{\Delta}),
\end{equation} 
where $S_{\Delta}$ has the properties described in Proposition \ref{prop:biggisht}.
\end{myprop}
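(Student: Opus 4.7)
The plan is to open the divisor function by writing $\tau_{iT}(n) = \sum_{ab=n}(a/b)^{iT}$, insert a smooth dyadic partition of unity on both $a$ and $b$, and apply Cauchy--Schwarz over the $O(\log T)$ resulting dyadic ranges as in display \eqref{eq:Section1.2Formula}. This reduces the problem, up to a factor of $\log T$, to bounding
\[
 J(A,B) := \int w_1\Bigl(\tfrac{T-t}{\Delta}\Bigr) \Bigl| \sum_{a\asymp A}\sum_{b\asymp B} \frac{v(a/A)v'(b/B) e(abx)}{a^{1/2+i(t-T)} b^{1/2+i(t+T)}}\Bigr|^2 dt, \qquad AB=N=\sqrt{\Delta T},
\]
for each dyadic pair $(A,B)$, with an acceptable loss of log factors. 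Since $T^\delta\ll \Delta \ll T^{1-\delta}$, the product $AB=N=\sqrt{\Delta T}$ forces $N$ to sit in the range where neither $A$ nor $B$ is trivially small, but the critical balance is $A\approx \Delta^{1/2}$, $B\approx T^{1/2}$, as explained in the introduction.

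My strategy is to treat three regimes of $(A,B)$ separately, invoking the four lemmas already advertised. First, when $B\gg T^{1/2+\varepsilon}$, I apply Poisson summation on $b$ inside the square, prior to the $t$-integration (Lemma \ref{lemma:bigB}); the length of the $b$-sum shrinks and this gain combines with the short $t$-integral to yield a bound of the form $N|\zeta(1+2iT)|^2(\log T)^{-50}$. Symmetrically, when $A\gg \Delta^{1/2+\varepsilon}$, I apply Poisson on $a$ (Lemma \ref{lemma:bigA}), where the conductor is controlled by $\Delta$ rather than $T$. What remains is the narrow critical window $A\le \Delta^{1/2+\varepsilon}$ and $B\le T^{1/2+\varepsilon}$ (equivalently, both are within $T^{\varepsilon}$ of the balanced values).

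For the critical window I open the square, perform the $t$-integration first to obtain a shifted convolution sum of the shape \eqref{eq:IntroShiftedSumFormula}, and then split by the size of $B$. When $B$ is sufficiently large, I apply Lemma \ref{lemma:ShiftedConvolutionWithBsummed}: I first use the algebraic identity \eqref{eq:OscillationSwitching} to transfer the oscillation $(b_1/b_2)^{2iT}$ onto $(a_2/a_1)^{2iT}$ at the cost of a mildly oscillatory factor of phase $\ll T/\Delta$, then solve the congruence $a_1 b_1-a_2 b_2=h$ modulo the smaller modulus $a_1$ by Poisson on $b_2$, and extract cancellation in the resulting $h$-sum through a classical exponential sum bound. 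The small-$B$ contribution is handled by Lemma \ref{lemma:smallB}, obtained via Poisson on $a$ modulo $b_1$ as in the Duke--Friedlander--Iwaniec treatment of binary additive divisor sums; these terms supply an off-diagonal main term whose dyadic sum in $\Delta$ is controlled, and it is precisely these terms that assemble into the quantity $S_\Delta$ satisfying \eqref{eq:SDeltaProperty}.

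Summing the contributions of all $O(\log T)$ dyadic pairs $(A,B)$ and absorbing the Cauchy--Schwarz log loss into the small power savings from each lemma yields the claimed bound $I(\Delta,T,x,N)\ll N|\zeta(1+2iT)|^2((\log T)^{-50}+S_\Delta)$. The main obstacle, and the reason for the oscillation-switching manoeuvre, is the critical range $B\approx T^{1/2}$: a direct application of the \cite{DFI} argument with the roles of $a_i,b_i$ reversed is rendered inefficient by the factor $(b_1/b_2)^{-2iT}$, so one must first redistribute this oscillation via \eqref{eq:OscillationSwitching} before Poisson is applied. Obtaining uniformity in $x$ (so that the bound is independent of the rational approximation to $2x$) and the bookkeeping required to ensure the $\log$ losses from the dyadic decomposition are genuinely absorbed constitute the remaining technical burden.
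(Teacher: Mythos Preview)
Your overall strategy matches the paper, but there is a real gap in the $\log T$ bookkeeping around the small-$B$ contribution. You apply Cauchy--Schwarz over all $O(\log T)$ dyadic pieces and claim the resulting loss is ``absorbed into the small power savings from each lemma.'' This works for Lemmas \ref{lemma:bigB}, \ref{lemma:bigA}, and \ref{lemma:ShiftedConvolutionWithBsummed}, each of which saves a genuine power of $T$. It fails for Lemma \ref{lemma:smallB}: that bound contains the term $M|\zeta(1+2iT)|^2 S_\Delta$, and $S_\Delta$ carries no power saving---only $\sum_\Delta S_\Delta \ll 1$. A $\log T$ loss here gives $(\log T)S_\Delta$ in place of $S_\Delta$, and after summing over dyadic $\Delta$ in Proposition \ref{prop:FboundmediumDelta} you obtain $\ll \log T$ instead of $\ll 1$, which is not adequate for the main theorem.

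The paper circumvents this via Lemma \ref{lemma:IDeltaDyadicDecomposition}: the entire range $b \leq 2B_0$ (with $B_0 = T^\alpha$, $\alpha < 1/4$) is collected into a \emph{single} block $I_{B_0}$, separated from the rest by one application of Cauchy's inequality at the cost of only a constant factor. The dyadic decomposition with its $\log T$ loss is then applied only to $B \gg B_0$, where the power-saving lemmas absorb it; this is exactly the remark ``the main point here is we did not lose the factor $\log T$'' in the proof of Lemma \ref{lemma:IDeltaDyadicDecomposition}. Relatedly, Lemma \ref{lemma:smallB} does not belong inside your ``critical window'' $B \approx T^{1/2}$: it handles $B \ll T^\alpha$, which is disjoint from that window. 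The correct partition is $b \leq B_0$ (Lemma \ref{lemma:smallB}), then $B_0 \ll B \ll T^{1/2-\varepsilon}$ (Lemma \ref{lemma:bigA}, which requires $B \gg T^\delta$ and so cannot reach down to $B \asymp 1$ as your second regime implicitly assumes), then $T^{1/2-\varepsilon} \ll B \ll T^{1/2+\varepsilon}$ (Lemma \ref{lemma:ShiftedConvolutionWithBsummed}), and finally $B \gg T^{1/2+\varepsilon}$ (Lemma \ref{lemma:bigB}).
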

Proposition \ref{prop:smallt} then implies Proposition \ref{prop:FboundmediumDelta}, via Lemma \ref{lemma:F0integralSimplified}.

The methods in this section rely on opening the divisor function $\tau_{iT}(n) = \sum_{ab=n} (a/b)^{iT}$ 
and do not rely on the spectral theory of automorphic forms but rather abelian harmonic analysis.  
The relative sizes of $a$ and $b$ play a major role in our estimations, and so we wish to localize $a$ and $b$ at this early stage.  To this end, define
\begin{equation}
\label{eq:IABdef}
I_{A,B}  = \intR w_1\Big(\frac{t}{\Delta}\Big) \Big|\sum_{a, b}
\frac{
e(abx)}{a^{1/2 -it} b^{1/2 + 2iT-it}}  f\Big(\frac{a}{A}\Big)  g\Big(\frac{b}{B}\Big) \Big|^2 dt,
\end{equation}
where $A, B \gg 1$, and $w_1,f,g$ are fixed smooth functions supported on the positive reals.  
For small values of $b$, it is better to not use a dyadic partition.  Instead,
let $B_0 \asymp T^{\alpha}$ for some small $0 < \alpha < 1/4$, and let $g_0$ be a smooth function such that $g_0(y) = 1$ for $y < 1$, and $g_0(y) = 0$ for $y > 2$.  Then set
\begin{equation}
\label{eq:IB0def}
 I_{B_0} = \intR w_1\Big(\frac{t}{\Delta}\Big) \Big| \sum_{a, b} \frac{e(ab x) }{a^{1/2-it} b^{1/2 + 2iT -it}} g_0\Big(\frac{b}{B_0}\Big) w_2\Big(\frac{ab}{N}\Big) \Big|^2 dt.
\end{equation}

We elucidate a connection between $I(\Delta, T, x, N)$, $I_{A,B}$, and $I_{B_0}$, with the following
\begin{mylemma}
\label{lemma:IDeltaDyadicDecomposition}
With certain $w, f, g, g_0, B_0$ as above, and with $AB \asymp N = \sqrt{\Delta T}$, we have
\begin{equation}
\label{eq:IDeltaB0ABdecomposition}
 I(\Delta, T, x, N) \ll I_{B_0} 
 + \log T \Big(\sum_{\substack{B_0 \ll B \ll N \\ \text{dyadic}}} 
 I_{A,B} \Big) + \frac{N \log T}{\Delta^{100}}.
\end{equation}
\end{mylemma}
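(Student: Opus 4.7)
The plan is to open the divisor function as $\tau_{iT}(n) = \sum_{ab=n}(a/b)^{iT}$, which gives
\begin{equation*}
\frac{\tau_{iT}(n)\, e(nx)}{n^{1/2+it}} = \sum_{ab=n} \frac{e(abx)}{a^{1/2+it-iT}\, b^{1/2+it+iT}},
\end{equation*}
and then perform the change of variable $t \mapsto T-t$ in the outer integral defining $I(\Delta,T,x,N)$. Since $w_1$ is supported on the positive reals, after this substitution the summand exactly matches the integrand appearing in \eqref{eq:IABdef} and \eqref{eq:IB0def}, namely $\frac{e(abx)}{a^{1/2-it} b^{1/2+2iT-it}}$, and the outer cutoff becomes $w_1(t/\Delta)$. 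The weight $w_2(ab/N)$ from the original definition of $I(\Delta,T,x,N)$ survives unchanged.

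Next I would split the $b$-sum according to its size. Writing $1 = g_0(b/B_0) + (1 - g_0(b/B_0))$ and applying $|X+Y|^2 \leq 2|X|^2 + 2|Y|^2$ inside the $t$-integral, the contribution of the first piece is precisely $2 I_{B_0}$. For the second piece, I would introduce a smooth dyadic partition $1 - g_0(b/B_0) = \sum_{B} g(b/B)$ with $B$ ranging over the $O(\log T)$ dyadic values with $B_0 \ll B \ll N$; dyadic values $B \gg N$ contribute nothing because $w_2(ab/N)$ forces $ab \asymp N$ while $a \geq 1$. Cauchy--Schwarz on the dyadic $B$-sum (using that the number of terms is $O(\log T)$) yields the factor of $\log T$ in the middle term of \eqref{eq:IDeltaB0ABdecomposition}, reducing matters to a single dyadic block for each $B$.

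It remains to decouple $a$ and $b$ inside the factor $w_2(ab/N)$ so that each dyadic block takes the form $I_{A,B}$ with $A = N/B$. For this I would apply Mellin inversion,
\begin{equation*}
w_2\Big(\frac{ab}{N}\Big) = \frac{1}{2\pi}\intR \widetilde{w_2}(-iu)\Big(\frac{ab}{N}\Big)^{iu}\, du,
\end{equation*}
truncate the $u$-integral at height $|u| \leq \Delta^{1-\varepsilon}$ (using the Schwartz decay of $\widetilde{w_2}$), apply Cauchy--Schwarz to the $u$-integral against $|\widetilde{w_2}(-iu)|$, and then perform the shift $t \mapsto t-u$ so that the factor $(ab)^{iu}$ is absorbed into $a^{-it} b^{-it}$. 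Over-extending the resulting $t$-integral back to the full support of $w_1(t/\Delta)$ (using positivity, exactly as in the proof of Lemma \ref{lemma:F0integralSimplified}) separates $a$ and $b$. Because $b \asymp B$ is now enforced by $g(b/B)$, the constraint $ab \asymp N$ automatically localizes $a$ to $a \asymp A$, so I can legitimately insert a redundant smooth cutoff $f(a/A)$. This produces precisely the expression $I_{A,B}$ for each dyadic block. The tail $|u| > \Delta^{1-\varepsilon}$ of the Mellin integral, estimated trivially with Lemma \ref{lemma:MontgomeryVaughan} and the rapid decay of $\widetilde{w_2}$, contributes the error $\frac{N \log T}{\Delta^{100}}$.

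The main obstacle is the Mellin decoupling step: it must be carried out in such a way that the dyadic Cauchy--Schwarz and the Mellin Cauchy--Schwarz together cost only a single factor of $\log T$, which is why one must truncate the Mellin integral sharply and use that $\int |\widetilde{w_2}(-iu)|\, du = O(1)$ so that Cauchy--Schwarz on the $u$-integral is lossless.
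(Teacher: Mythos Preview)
Your plan is essentially the same as the paper's: open $\tau_{iT}$, split off small $b$ with $g_0$, dyadically decompose the rest with Cauchy--Schwarz (costing $\log T$), and then strip $w_2(ab/N)$ via Mellin inversion as in Lemma \ref{lemma:F0integralSimplified}. The change of variables $t\mapsto T-t$ that you spell out is indeed what reconciles the weights in \eqref{eq:IDeltaTxNdef} and \eqref{eq:IABdef}.

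There is one ordering issue worth fixing. You propose to apply Mellin inversion to $w_2(ab/N)$ first and then insert the redundant cutoff $f(a/A)$, arguing that ``the constraint $ab\asymp N$ automatically localizes $a$.'' But once you interchange the $u$-integral with the $(a,b)$-sum, the factor $w_2(ab/N)$ has been replaced by $(ab/N)^{iu}$, and there is no longer any constraint forcing $a\asymp A$; the inner $a$-sum $\sum_a a^{-1/2+it-iu} e(abx)$ does not even converge. The correct order (and the one the paper uses) is to insert $f(a/A)$ \emph{before} the Mellin step, while $w_2(ab/N)$ is still present to make the insertion genuinely redundant; only then does the Mellin/Cauchy--Schwarz/shift-in-$t$ argument go through, producing $I_{A,B}$ up to the error $N(\log T)\Delta^{-100}$ from the Mellin tail. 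With that swap your argument is complete and matches the paper's.
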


\begin{proof}
In the definition of $I(\Delta, T, x, N)$ (that is, \eqref{eq:IDeltaTxNdef}), write $\tau_{iT}(n) = \sum_{ab=n} (a/b)^{iT}$, and then apply a partition of unity to the $b$-sum.  In this way, we have
\begin{equation*}
 I(\Delta, T, x, N) =  \intR w_1\Big(\frac{t}{\Delta}\Big) \Big|\sum_{j} \sum_{a, b} \frac{e(ab x) g_j(b)}{a^{1/2-it} b^{1/2 + 2iT -it}} w_2\Big(\frac{ab}{N}\Big) \Big|^2 dt,
\end{equation*}
where $g_j$ are elements of the partition.  
 
 First we choose $g_0(b/B_0)$ to be one element of the partition of unity.  Next we simply apply Cauchy's inequality to separate this part of the partition from all the other terms.  This explains the presence of $I_{B_0}$ in \eqref{eq:IDeltaB0ABdecomposition}; the main point here is we did not lose the factor $\log T$.
 
 We may assume that $g_0$ is such that $1-g_0(y/B_0)$ equals a dyadic partition of unity (meaning, each $g_j(b) = g(b/B)$, say with $b \asymp B$, where $AB = N$, and $B$ runs over a dyadic sequence), since we could start with a dyadic partition of unity of this type, and then write $g_0$ as a sum of elements of this dyadic partition.  Having located $b \asymp B$ with $g(b/B)$, $a$ is automatically localized by $a \asymp A$ by the support of $w_2$.  We are then free to multiply by $f(a/A)$ where $f$ is smooth of compact support, and such that $f(a/A) = 1$ for all $a$ in the support of $g(b/B) w_2(ab/N)$.
 In all, this shows that we may write
 \begin{equation*}
  I(\Delta, T, x, N) \ll I_{B_0} + \intR  w_1\Big(\frac{t}{\Delta}\Big) \Big|\sum_{\substack{B_0 \ll B \ll N \\ B \text{ dyadic}}} \sum_{a, b} \frac{e(ab x) f(a/A) g(b/B)}{a^{1/2-it} b^{1/2 + 2iT -it}} w_2\Big(\frac{ab}{N}\Big) \Big|^2 dt.
 \end{equation*}
Applying Cauchy's inequality shows an estimate \emph{almost} of the form \eqref{eq:IDeltaB0ABdecomposition}, the only difference being that instead of $I_{A,B}$, we have an expression of the form
\begin{equation*}
 \intR w_1\Big(\frac{t}{\Delta}\Big) \Big| \sum_{a, b} \frac{e(ab x) f(a/A) g(b/B)}{a^{1/2-it} b^{1/2 + 2iT -it}} w_2\Big(\frac{ab}{N}\Big) \Big|^2 dt.
\end{equation*}
That is, we have a weight function of the form $f(a/A) g(b/B) w_2(ab/N)$, and we would like to omit $w_2$ in order to separate the variables further.  Using a method very similar to that in the proof of Lemma \ref{lemma:F0integralSimplified}, we can do this, at the cost of an error term of size $\Delta^{-100} N \log T$.
\end{proof}

In the application to $I(\Delta, T, x, N)$ with $\Delta \gg T^{\varepsilon}$, we have $AB = N = (\Delta T)^{1/2}$.  However, for some technical reasons, when $\Delta \ll T^{\varepsilon}$, we need a minor generalization of the range of $A,B$.  For this reason, in the rest of this section we let
\begin{equation}
\label{eq:Msize}
 AB = M, \quad \text{where} \quad  T^{1/2-\varepsilon} \ll M \ll T^{1-\varepsilon}.
\end{equation}

For ease of reference, we collect a few lemmas together, deferring the proofs until later in this section.

\begin{mylemma}[Poisson in $b$]
\label{lemma:bigB}
Suppose $1 \ll \Delta \ll T^{1-\delta}$, and $T^{1/2} \ll B \ll T^{1-\delta}$, for some $\delta > 0$.  Then
\begin{equation}
I_{A,B} \ll \Big(\Delta + \frac{MT}{B^2}\Big) \log T.
\end{equation}
\end{mylemma}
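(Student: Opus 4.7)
The plan is to apply Poisson summation to the inner $b$-sum inside the square (before any $t$-integration), extract the $t$-dependence cleanly via the envelope theorem, and finish with the Montgomery--Vaughan mean value bound. Writing $R(t)$ for the double sum in \eqref{eq:IABdef}, Poisson converts the $b$-sum into an oscillatory integral
\begin{equation*}
\sum_b \frac{e(abx) g(b/B)}{b^{1/2+2iT-it}} = \sum_{h \in \mz} \int g(y/B) y^{-1/2} e^{i\phi_{a,h,t}(y)} dy,
\end{equation*}
where $\phi_{a,h,t}(y) = 2\pi(ax-h)y + (t-2T)\log y$. The phase has a stationary point $y_0 = (2T-t)/(2\pi(ax-h))$, which lies in the support of $g$ only when $ax - h > 0$ and $ax - h \asymp T/B$; this confines the non-negligible $h$ to an interval of length $\asymp T/B$ per $a$, each contributing an amplitude of size $\sqrt{B/T}$ by stationary phase. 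The hypothesis $B \gg T^{1/2}$ ensures $T/B < B$, so Poisson genuinely shortens the sum.

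The key algebraic simplification comes from the envelope theorem: $\partial_t \phi_{a,h,t}(y_0) = \log y_0 = \log(2T-t) - \log(2\pi(ax-h))$, which integrates to
\begin{equation*}
\phi_{a,h,t}(y_0(t)) = \phi_{a,h,0}(y_0(0)) + G(t) - t\log(2\pi(ax-h)),
\end{equation*}
with $G(t) = \int_0^t \log(2T-s) ds$ independent of $(a,h)$. Combining with the outer factor $a^{it}$ collapses the $t$-dependence of each term to $\mu_0(a,h)^{it}$ multiplied by a common phase $e^{iG(t)}$ of modulus one, where $\mu_0(a,h) := a/(2\pi(ax-h))$. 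Strikingly, $\mu_0(a,h) = 1/(2\pi(x-s))$ depends only on the slope $s = h/a$. Parameterizing by slopes $s = p/q$ with $(p,q) = 1$ and multiplicities $k$ (so that $(a,h) = (kq, kp)$ with $k \asymp A/q$), only slopes in the range $|s-x| \asymp T/M$ contribute, and each slope-group has size $\leq A/q$. After separating the smooth, $t$-dependent stationary-phase amplitude via a Mellin inversion in $t/T$, we obtain essentially a Dirichlet polynomial $\sum_r a_r \mu_r^{it}$ with frequencies $\mu_r = 1/(2\pi(x - s_r))$.

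The two inputs to Lemma~\ref{lemma:MontgomeryVaughan} are then verified. Trivially bounding within each slope-group gives $|a_r|^2 \ll M/(Tq_r^2)$; using the count $\asymp \phi(q) T/M$ of relevant slopes with denominator $q$, one obtains
\begin{equation*}
\sum_r |a_r|^2 \ll \sum_{q \leq 2A} \frac{\phi(q) T}{M} \cdot \frac{M}{Tq^2} = \sum_{q \leq 2A} \frac{\phi(q)}{q^2} \ll \log T.
\end{equation*}
For the spacing, distinct slopes $s_1 \ne s_2$ with denominators at most $A$ satisfy $|s_1 - s_2| \geq 1/A^2$, so $|\mu_{r_1} - \mu_{r_2}| \gtrsim A^{-2}(T/M)^{-2} = B^2/T^2$, yielding $|\log\mu_{r_1} - \log\mu_{r_2}| \gtrsim B^2/(MT)$ and hence $\delta_r^{-1} \ll MT/B^2$. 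Montgomery--Vaughan then produces
\begin{equation*}
\int w_1(t/\Delta) |R(t)|^2 dt \ll \Big(\Delta + \frac{MT}{B^2}\Big) \log T,
\end{equation*}
as claimed.

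The main technical obstacle will be the clean separation of the slowly varying, $t$-dependent stationary-phase amplitude $W(a,h,t)$ from the pure oscillation $\mu_0(a,h)^{it}$; since $\Delta$ may be as large as $T^{1-\delta}$, one cannot simply Taylor-expand $W$ in $t/T$. A Mellin (or Fourier) decomposition of $W$ in the variable $t/T$ reduces the problem to a family of Dirichlet polynomials with identical spacing structure and rapidly decaying Mellin weights, so the extra parameter integral contributes only an $O(1)$ factor. The restriction $B \ll T^{1-\delta}$ is used to ensure the stationary phase approximation is uniformly valid, and non-stationary contributions are absorbed into negligible error terms by standard repeated integration by parts.
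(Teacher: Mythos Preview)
Your proposal is correct and follows essentially the same route as the paper: Poisson in $b$, stationary phase, reduction to coprime pairs (your slope parameterization $s=p/q$ is exactly the paper's gcd extraction $(a,l)=1$), and Montgomery--Vaughan with the same spacing $\delta_r^{-1}\ll MT/B^2$. One correction: your claim that ``one cannot simply Taylor-expand $W$ in $t/T$'' is mistaken---in the definition \eqref{eq:IABdef} the variable $t$ is supported on $t\asymp\Delta\ll T^{1-\delta}$, so $t/T\ll T^{-\delta}$, and the paper separates the amplitude $P\big(\tfrac{2T-t}{B(l+ax)}\big)$ by a straightforward Taylor expansion in $t/T$ and $ax/l$, avoiding the Mellin machinery you propose.
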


\begin{mylemma}[Poisson in $a$]
\label{lemma:bigA}
Suppose $(\log T)^{100} \ll \Delta \ll T^{1-\delta}$ and $B \gg T^{\delta}$ for some $\delta > 0$.  Then 
\begin{equation}
I_{A,B} \ll M |\zeta(1+ 2iT)|^2 (\log T)^{-50}  + \Delta \frac{B^2}{M}  \log T + \Delta \log  T.
\end{equation}
\end{mylemma}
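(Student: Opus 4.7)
The plan is to apply Poisson summation to the $a$-variable in the inner sum defining $I_{A,B}$, before any manipulation of the $t$-integral, exploiting the fact that when $A$ is relatively small the Poisson dual is short. For each fixed $b$ and $t$, Poisson on the $a$-sum produces
\[
\sum_{a\ge 1}\frac{e(abx)}{a^{1/2-it}}f\Big(\frac{a}{A}\Big) = A^{1/2+it}\sum_{\mu\in\mz}\int_0^{\infty}u^{-1/2+it}f(u)\,e\bigl(Au(bx-\mu)\bigr)\,du.
\]
The integrand's phase $\phi(u)=A(bx-\mu)u + (t/2\pi)\log u$ has a unique stationary point $u_{0}= -t/\bigl(2\pi A(bx-\mu)\bigr)$ lying in the support of $f$ precisely when $|\mu-bx|\asymp \Delta/A$. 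Repeated integration by parts shows that the integral is negligible outside this range, while inside it has size $\asymp \Delta^{-1/2}$ and carries an explicit leading oscillatory phase of the form $e\bigl(-t/(2\pi)+(t/2\pi)\log|u_{0}|\bigr)$.

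Next I would expand the square $|T(t)|^{2}$ as a quadruple sum over $(b_1,b_2,\mu_1,\mu_2)$ satisfying $|\mu_i-b_i x|\asymp\Delta/A$, and integrate against $w_1(t/\Delta)$. The $t$-derivative of the combined phase, obtained by combining the factor $(b_1/b_2)^{it}$ with the two stationary-phase leading phases, equals $\log\bigl(b_1(b_2x-\mu_2)/(b_2(b_1x-\mu_1))\bigr)$, so integration by parts in $t$ localizes the contribution to a fuzzy ``Poisson diagonal'' $b_2\mu_1\approx b_1\mu_2$ with effective slack $|b_2\mu_1-b_1\mu_2|\ll B/A$. The strict off-diagonal near-misses, counted combinatorially via a Farey-type argument (the condition $\mu/b=m$ forces $m$ to lie in an interval of length $\asymp \Delta/M$ around $x$), produce the bound $\Delta B^2/M\cdot\log T$, while the trivial subdiagonal $b_1=b_2$ (forcing $\mu_1=\mu_2$) yields the residual $\Delta\log T$ term.

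Finally, on the exact Poisson diagonal $b_2\mu_1=b_1\mu_2$, I would parametrize solutions via $b_1=de$, $b_2=df$, $\mu_1=er$, $\mu_2=fr$ with $(e,f)=1$, and evaluate the resulting Dirichlet series by Mellin inversion, in a manner parallel to Section \ref{section:diagonaltermsGeneralx}. An appeal to Lemma \ref{lemma:diagonaltermsum} and the Vinogradov--Korobov bound (Lemma \ref{lemma:VinogradovKorobov}) on a suitably shifted contour then yields the main-term contribution of size $M|\zeta(1+2iT)|^{2}(\log T)^{-50}$; the $(\log T)^{-50}$ savings arise because restricting $b$ to a fixed dyadic segment captures only a logarithmically small fraction of the full main term of $I(\Delta,T,x,N)$, and the shifted contour converts an additional logarithmic factor into the claimed power. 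The main obstacle will be tracking the stationary-phase expansion uniformly as $(b,\mu)$ traverses the boundary regions $|\mu-bx|\asymp\Delta/A$, and ensuring that all error terms are uniform in $x$; a separate dyadic decomposition of the transition zones, together with careful bookkeeping of the lower-order terms in the stationary-phase asymptotic, should suffice.
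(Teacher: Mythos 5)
Your opening moves match the paper exactly: Poisson summation in $a$ with the dual variable localized by $|\mu-bx|\asymp\Delta/A$ (equivalently $|x-\mu/b|\asymp\Delta/M$), stationary phase evaluation, opening the square and integrating in $t$ to localize to $|b_2\mu_1 - b_1\mu_2|\ll B^2/M$, and the Farey-spacing count of near-diagonal pairs producing $\Delta B^2/M\cdot\log T$. The description of the $\Delta\log T$ term as coming from ``$b_1=b_2$ forcing $\mu_1=\mu_2$'' is slightly off --- in the paper it arises from the possibility of several values of $\mu$ per $b$ on the diagonal, contributing a factor $1+\Delta b/M$ --- but that is cosmetic.

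The genuine gap is in how you propose to obtain the term $M\,|\zeta(1+2iT)|^{2}(\log T)^{-50}$. Two things go wrong. First, after Poisson in $a$ the coefficient $\tau_{iT}(n)$ has been dismantled into the $(b,\mu)$-pair, so Lemma \ref{lemma:diagonaltermsum}, which concerns $\sum_n|\tau_{iT}(n)|^2 w(n/N)$, cannot be invoked: there is no $\tau_{iT}$ left to sum. Second, the heuristic that ``a dyadic segment in $b$ captures only a logarithmically small fraction of the main term'' is not the mechanism and would at best suggest a saving of one factor of $\log T$, not $(\log T)^{-50}$. The paper's source of the saving is the following: before opening the square, extract the GCD $d=(b,\mu)$ and write $b=db_0$, $\mu=d\mu_0$ with $(b_0,\mu_0)=1$; the $d$-dependence then packages into the coefficient $\alpha_{b_0}=b_0^{-2iT'}\sum_d g(db_0/B)\,d^{-1-2iT'}$. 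For $b_0\ll(\log T)^{100}$, the range of $d$ has length $\asymp B/b_0\gg T^{\delta}/(\log T)^{100}$ --- this is where the hypothesis $B\gg T^{\delta}$ enters essentially --- so a Mellin contour shift to $\operatorname{Re}(s)=-c/(\log T)^{2/3}$ combined with Vinogradov--Korobov yields $\alpha_{b_0}\ll(\log T)^{-100}$; for $b_0\gg(\log T)^{100}$ the convergence of $\sum b_0^{-2}$ supplies the saving. You would see the same $\zeta(1+2iT)$-value cancellation in your $e$- and $f$-sums with the oscillation $(f/e)^{2iT}$, but as stated your argument does not identify this and in particular does not explain why $B\gg T^{\delta}$ is needed; this needs to be repaired before the proof is complete.
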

\begin{mylemma}[Determinant equation, Poisson in $b_2$]
\label{lemma:ShiftedConvolutionWithBsummed}
 Suppose that $T^{\delta} \ll \Delta \ll T^{1-\delta}$ and $T^{1/2-\delta} \ll M \ll T^{1-\delta}$ for some $\delta > 0$.  Then
 \begin{equation}
 \label{eq:IABboundFromShiftedConvolutionWithBsummed}
  I_{A,B} \ll \Delta \log T + \frac{M}{\Delta} +  T^{\varepsilon} \frac{A^2}{\Delta}  \frac{(\Delta T)^{1/4}}{M^{1/2}} (\Delta T)^{1/2} \Big(\frac{\Delta}{T}\Big)^{1/12}.
 \end{equation}
\end{mylemma}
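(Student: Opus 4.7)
The plan is to open the square in $I_{A,B}$, execute the $t$-integral, and reduce the problem to a shifted divisor sum with an oscillatory weight. After squaring out and integrating, one arrives at
\begin{equation*}
I_{A,B} \approx \Delta \sum_{a_1, a_2, b_1, b_2} \frac{e((a_1 b_1 - a_2 b_2)x)}{(a_1 a_2 b_1 b_2)^{1/2} (b_1/b_2)^{2iT}} f\Big(\frac{a_1}{A}\Big) f\Big(\frac{a_2}{A}\Big) g\Big(\frac{b_1}{B}\Big) g\Big(\frac{b_2}{B}\Big) \widehat{w_1}\Big(\frac{\Delta}{2\pi}\log\frac{a_2 b_2}{a_1 b_1}\Big),
\end{equation*}
where the decay of $\widehat{w_1}$ effectively restricts the shift $h := a_1 b_1 - a_2 b_2$ to $|h| \ll M/\Delta$ (all larger $|h|$ contribute negligibly).

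The diagonal $h = 0$ is handled by parametrizing $a_1 b_1 = a_2 b_2$ through greatest common divisors and evaluating the resulting double sum directly; this produces the $\Delta \log T$ term, while tail approximations in the passage from $\widehat{w_1}$ to a sharp cutoff absorb into the $M/\Delta$ term. For $h \neq 0$ I would first extract the common factor $d = (a_1, a_2)$ (pairs with $d$ large contribute acceptably by crude counting), reducing to the case $(a_1,a_2)=1$. The central manipulation is the algebraic identity
\begin{equation*}
 (b_1/b_2)^{2iT} = (a_2/a_1)^{2iT}\Big(1+\frac{h}{a_2 b_2}\Big)^{2iT}
\end{equation*}
of \eqref{eq:OscillationSwitching}, which transfers the large $(b_1/b_2)^{2iT}$ oscillation onto a $b_2$-free factor $(a_2/a_1)^{2iT}$ plus a mild factor behaving like $\exp(2iT h/(a_2 b_2))$ of total variation $\ll T/\Delta$. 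The equation $a_1 b_1 - a_2 b_2 = h$ now reads $b_2 \equiv -\overline{a_2} h \pmod{a_1}$ (with $b_1$ determined), and I would apply Poisson summation to the $b_2$-sum on this arithmetic progression of modulus $a_1 \asymp A$.

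Because $B \gg A$ in the relevant ranges, the dual $b_2$-sum is short, and a trivial bound on it is essentially admissible. What survives is a sum over $h$ of length $M/\Delta$ weighted by a phase $e(\Phi(h))$ coming from the residual oscillation and Poisson twist, with $\Phi(h)$ of size $\asymp Th/M$ (hence with higher derivatives decreasing in size controlled by $T/(\Delta M)$ and so on). I would then apply a van der Corput / Weyl exponent pair bound to the $h$-sum, from which the characteristic $(\Delta/T)^{1/12}$ saving arises (corresponding to one application of the $B$-process, i.e.\ a cube-root-type derivative test). Assembling the outer count $A^2$ for the $(a_1,a_2)$ pairs, the normalization factor $M^{-1}$ from $(a_1 a_2 b_1 b_2)^{-1/2}$, the $h$-length $M/\Delta$, the prefactor $\Delta$, and the van der Corput saving yields exactly the third term of \eqref{eq:IABboundFromShiftedConvolutionWithBsummed}.

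The main obstacle I anticipate lies in the Poisson step: since the modulus $a_1$ varies with the summation variable, all Gauss/Ramanujan sum estimates, the handling of the common factor $(a_1, a_2)$, and the treatment of the boundary terms in Poisson have to be tracked with uniformity across the $(a_1, a_2)$ summation. The van der Corput estimate for the $h$-sum, once the phase has been brought into explicit form by expanding $(1 + h/(a_2 b_2))^{2iT}$, should be a fairly mechanical application of standard exponent-pair machinery (cf.\ \cite{GK},\cite{Huxley}), and is not where the real work lies.
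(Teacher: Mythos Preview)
Your outline is correct and matches the paper's approach in all structural respects: open the square, integrate in $t$, isolate $h=0$ (giving $\Delta\log T$), extract $d=(a_1,a_2)$, use the identity \eqref{eq:OscillationSwitching}, interpret the determinant equation as $b_2\equiv -\overline{a_2}h\pmod{a_1}$, apply Poisson in $b_2$, and finish with a van der Corput bound on the $h$-sum.

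The one place where you understate the work is the sentence ``the dual $b_2$-sum is short, and a trivial bound on it is essentially admissible.'' In fact the dual frequency $\nu$ ranges over $\nu\asymp TH/B^2$, which in the critical regime $B\asymp T^{1/2}$ has the same length as the $h$-sum, so Poisson by itself gives no saving (the paper notes explicitly that the trivial bounds before and after are equalized). What Poisson \emph{does} accomplish is that the accompanying integral has a stationary point $t_0\asymp\sqrt{hTa_1/(\nu a_2)}$, and a full stationary-phase evaluation produces a phase $\phi(t_0)=-8T\eta_0^{1/2}(1+c_1\eta_0^{1/2}+\dots)$ with $\eta_0=\pi h\nu/(4Ta_1a_2)$. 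This nonlinear $\sqrt{h}$ dependence is the entire source of cancellation in the $h$-sum: the remaining linear pieces $e(dhx)$ and $e(-\nu\overline{a_2}h/a_1)$ have vanishing higher derivatives and contribute nothing to van der Corput. So the stationary-phase step is not a formality---without it there is no nonlinear phase in $h$ to exploit.

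A minor correction on the exponent-pair side: the saving is not ``one application of the $B$-process'' but rather the $(1/6,2/3)$ pair (Weyl/third-derivative, i.e.\ $AB(0,1)$), applied to the $\sqrt{h}$-type phase with $|\varphi^{(j)}(h)|\asymp (dHT/M)H^{-j}$; this is what yields $H^{1/2}(dHT/M)^{1/6}$ and ultimately the $(\Delta/T)^{1/12}$ factor.
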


Finally we require a result valid for $B \ll T^{\varepsilon}$.  
\begin{mylemma}[Determinant equation, Poisson in $a_2$]
\label{lemma:smallB}
Suppose that $B_0 = T^{\alpha}$, for some fixed $0 < \alpha < 1/4$, and $(\log T)^{100} \ll \Delta \ll T^{1-\delta}$, for some $\delta > 0$.  Then with $S_{\Delta}$ satisfying the same properties as in Proposition \ref{prop:biggisht}, we have
\begin{equation}
\label{eq:IB0bound}
 I_{B_0} \ll M |\zeta(1+ 2iT)|^2 ((\log T)^{-50} +  S_{\Delta} ) + \Delta \log T.
\end{equation}
\end{mylemma}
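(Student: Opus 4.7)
The plan is to follow the Duke--Friedlander--Iwaniec strategy and exploit the smallness of $B$: since $b_i \ll B_0 = T^{\alpha}$ with $\alpha < 1/4$, while $a_i \asymp M/b_i$ is much larger, Poisson summation modulo $b_1$ leaves essentially only the zero-frequency term.  Opening the square in $I_{B_0}$ and carrying out the $t$-integral, then setting $h = a_1 b_1 - a_2 b_2$, the resulting Fourier transform $\widehat{w_1}(\Delta \log((a_1 b_1)/(a_2 b_2))/(2\pi))$ restricts the quadruples to $|h| \ll (M/\Delta) T^{\varepsilon}$.  This reduces $I_{B_0}$, up to negligible error, to
\begin{equation*}
\Delta \sum_h e(hx) \sum_{a_1 b_1 - a_2 b_2 = h} \frac{W(a_1, b_1, a_2, b_2)}{(a_1 a_2 b_1 b_2)^{1/2}} (b_1/b_2)^{-2iT},
\end{equation*}
where $W$ is a smooth weight localizing $a_i \asymp M/b_i$ and $b_i \ll B_0$.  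I split according to $h = 0$ or $h \neq 0$.

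For $h = 0$, the truly diagonal contribution $a_1 = a_2$, $b_1 = b_2$ is bounded using Lemma \ref{lemma:diagonaltermsum} for the $a$-sum together with the trivial bound $\sum_{b \ll B_0} 1/b \ll \log T$, yielding $\ll \Delta |\zeta(1 + 2iT)|^2 \log T$.  Configurations with $b_1 \neq b_2$ but $a_1 b_1 = a_2 b_2$ carry the oscillation $(b_1/b_2)^{-2iT}$, which produces enough cancellation via an argument in the spirit of Section \ref{section:diagonaltermsGeneralx} to stay within a bound of the same order.

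For $h \neq 0$, I interpret the equation as a congruence $a_2 \equiv -h \overline{b_2} \pmod{b_1}$, write $a_2 = u b_1 + \ell$ with $\ell \in \{0, 1, \ldots, b_1 - 1\}$ the designated residue and $a_1 = u b_2 + (\ell b_2 + h)/b_1$, and apply Poisson summation in $u \in \mz$.  The dual weight is effectively supported on frequencies $|m| \ll b_1 b_2/M \ll T^{2\alpha - 1/2 + \varepsilon} \ll 1$ in our ranges, so only $m = 0$ survives.  The $m = 0$ term, after change of variables to reinstate a continuous $a_2$, yields an expression of the shape
\begin{equation*}
\Delta \sum_{h \neq 0} \sigma_{-1}(h) e(hx) \Phi_\Delta(h),
\end{equation*}
for a suitable smooth weight $\Phi_\Delta$ of scale $\asymp M/\Delta$, to which Lemma \ref{lemma:QxH} applies, producing the $M |\zeta(1+2iT)|^2 S_{\Delta}$ piece with the required dyadic summability.

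The principal obstacle will be the bookkeeping needed to verify that the zero-frequency Poisson term, after executing the $b_1, b_2$ summations and keeping track of the integer quotient $(\ell b_2 + h)/b_1$, collapses to precisely the shape governed by Lemma \ref{lemma:QxH}; and to confirm that the oscillation $(b_1/b_2)^{-2iT}$ in the $b_1 \neq b_2$, $h = 0$ sector yields enough cancellation to remain within the $\Delta \log T$ budget rather than inflating to $\Delta \log^2 T$ from a naive double-divisor count.
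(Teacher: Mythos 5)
Your plan is the paper's plan: open the square, integrate in $t$, solve $a_1 b_1 - a_2 b_2 = h$ by Poisson summation in $a_2$ modulo $b_1$, observe that only the zero frequency survives because $B_0^2/M \ll T^{2\alpha - 1/2 + \varepsilon} \ll 1$, and recognize the resulting $h$-sum as the same $MT_{OD}$-type quantity controlled by Lemma~\ref{lemma:QxH}. That identification and the frequency-count argument are exactly right.

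There is, however, a genuine gap in the $h \neq 0$ step: you write the solution set of $a_1 b_1 - a_2 b_2 = h$ as $a_2 \equiv -h\overline{b_2} \pmod{b_1}$, which presumes $(b_1, b_2) = 1$. When $d = (b_1, b_2) > 1$, the inverse $\overline{b_2}$ modulo $b_1$ does not exist, and more to the point the equation has solutions only when $d \mid h$, in which case the solutions form $d$ residue classes modulo $b_1$, not one. The paper handles this by first extracting $d = (b_1, b_2)$ (which forces $d \mid h$), reducing to coprime $b_1, b_2$ before Poisson; it is precisely the resulting sum $\sum_{d \mid h} d^{-1}$, after the $d$-range is extended, that produces the factor $\sigma_{-1}(h)$ in your final display. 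Without the gcd extraction the congruence manipulation is not valid and the $\sigma_{-1}(h)$ cannot be derived — your write-up asserts it appears without saying where it comes from, and the bookkeeping you flag as "the principal obstacle" is in fact where the missing step lives. A second, smaller point: for the $h = 0$, $b_1 \neq b_2$ sector you worry about staying within $\Delta \log T$, but this worry is moot — in the ranges where the lemma is applied one has $M \gg \Delta T^{\delta/2}$, so even a crude bound of $\Delta (\log T)^{O(1)}$ on that sector is absorbed into the $M|\zeta(1+2iT)|^2 (\log T)^{-50}$ term; no delicate cancellation from $(b_1/b_2)^{-2iT}$ is needed there.
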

Remark.  For later discussions in Section \ref{section:verysmallDelta}, it is helpful to describe how the conclusion of Lemma \ref{lemma:smallB} is affected when we only assume that $w_1$ has support on $\mathbb{R}$ instead of the positive reals.  In this case, a bound of the form \eqref{eq:IB0bound} still holds, but we may only claim that $S_{\Delta} \ll 1$, instead of the stronger condition \eqref{eq:SDeltaProperty}.


Before proceeding to the proofs of these lemmas, we describe how they combine to prove Proposition \ref{prop:smallt}.  We need to bound all the terms on the right hand side of \eqref{eq:IDeltaB0ABdecomposition}.  Lemma \ref{lemma:smallB} treats $I_{B_0}$ with an acceptable bound.  Next we analyze $I_{A,B}$.  In the case $ T^{\alpha} \ll B \ll T^{1/2-\varepsilon}$, we apply Lemma \ref{lemma:bigA}.  In the case $B \gg T^{1/2+\varepsilon}$, i.e., $A \ll \Delta^{1/2} T^{-\varepsilon}$, we may apply Lemma \ref{lemma:bigB}.  Finally, in the case $T^{1/2-\varepsilon} \ll B \ll T^{1/2+\varepsilon}$, i.e., $\Delta^{1/2} T^{-\varepsilon} \ll A \ll \Delta^{1/2} T^{\varepsilon}$, then we apply Lemma \ref{lemma:ShiftedConvolutionWithBsummed}.
We have therefore covered all the ranges of $B$, showing Proposition \ref{prop:smallt}.

\subsection{Proof of Lemma \ref{lemma:bigB}}
The basic idea is that Poisson summation in $b$ inside the absolute square gives a savings by reducing the length of summation (provided $B$ is slightly larger than $T^{1/2}$).  Furthermore, this process is compatible  with the savings from the $t$-integration.

We have
\begin{equation*}
 \sum_{b}
\frac{e(abx)}{ b^{1/2+2iT-it}} g\Big(\frac{b}{B}\Big)  =  \sum_{l \in \mz} \intR e(axu + l u - \frac{(2T-t)}{2\pi}
\log{u})  u^{-1/2} g(u/B) du.
\end{equation*}
Write the phase here as $\phi(u) = (ax+l)u - \frac{2T-t}{2\pi} \log u$, which satisfies
\begin{equation*}
\phi'(u) = (ax+l) - \frac{2T-t}{2\pi u}.
\end{equation*}
We first claim that the integral is small unless there is a stationary point.  The second term in the derivative of the phase is $\asymp \frac{T}{B}$, and repeated integration by parts (see \cite[Lemma 8.1]{BKY}) shows that the integral is small (say, $O(T^{-100})$) unless $(ax+l) \asymp \frac{T}{B}$.  By periodicity, we may as well assume $|x| \leq 1/2$, in which case $a|x| \leq a \ll \frac{M}{B} = o(T/B)$ (recall \eqref{eq:Msize}), and therefore this means $l \asymp \frac{T}{B}$.  We think of $l+ax$ as a perturbation of $l$.  The stationary point occurs at $2 \pi u_0 = \frac{2T-t}{l+ax}$.  By \cite[Lemma 5.5.6]{Huxley} or \cite[Proposition 8.2]{BKY} (taking only the first term in the asymptotic expansion), we get an asymptotic formula 
in the form
\begin{equation*}
 \sum_{b}
\frac{e(abx)}{ b^{1/2+2iT-it}} g\Big(\frac{b}{B}\Big)  =  \sum_{l}
\frac{ \Big(\frac{2T-t}{2\pi e}\Big)^{-i(2T-t)}}{(l+ax)^{1/2-2iT+it}}
P\Big(\frac{2T-t}{B(l+ax)}\Big) + O((BT)^{-1/2}),
\end{equation*}
where $P$ is a function supported on $v \asymp 1$, satisfying $P^{(j)}(v) \ll_j 1$.  In fact, $P$ is a constant multiple of $g$.
The phase here takes the form
\begin{equation*}
 \frac{e^{i\varphi(t,T)}}{(l+ax)^{it - 2iT}},
\end{equation*}
for some function $\varphi$ independent of $a$ and  $l$.  
Altogether, this shows
\begin{equation*}
I_{A,B} \ll \intR w_1\Big(\frac{t}{\Delta}\Big) \Big| \sum_{a, l}
\frac{P(\frac{2T-t}{ B(l+ax)})  f\big(\frac{a}{A}\big)}{a^{1/2 -it} (l+ax)^{1/2 - 2iT+it}}
\Big|^2 dt + O\Big(\frac{\Delta A}{BT}\Big).
\end{equation*}
Note that this error term is $\frac{\Delta A}{BT}  = \frac{\Delta M}{B^2 T}$, which is acceptable.

Since $\frac{|t|}{T} \ll \frac{\Delta}{T} \ll T^{-\delta}$, and $\frac{a|x|}{l} \ll \frac{A}{T/B} = \frac{M}{T} \ll T^{-\delta}$, we can take Taylor expansions to obtain
\begin{equation*}
 P\Big(\frac{2T-t}{ B(l+ax)}\Big) = P\Big(\frac{2T}{Bl}\Big) + \sum_{j,k} P_{j,k}\Big(\frac{2T}{Bl}\Big) \Big(\frac{t}{T}\Big)^j \Big(\frac{ax}{l}\Big)^k + O(T^{-100}),
\end{equation*}
where $P_{j,k}$ is a finite linear combination of derivatives of $P$, with $j+k \geq 1$, and the total number of terms in the sum bounded by a function of $\delta$.  By Cauchy's inequality, we then have
\begin{equation*}
 I_{A,B} \ll \intR w_1\Big(\frac{t}{\Delta}\Big) \Big| \sum_{a, l}
\frac{P(\frac{2T}{ Bl})  f\big(\frac{a}{A}\big)}{a^{1/2 -it} (l+ax)^{1/2 - 2iT+it}}
\Big|^2 dt + O\Big(\frac{\Delta A}{BT}\Big),
\end{equation*}
where by abuse of notation, $P$ is meant to represent $P$ or one of the $P_{j,k}$ (of course the case of $P$ is the hardest for us to estimate, as the terms with $j+k \geq 1$ will be smaller than this by a factor $\ll T^{-\varepsilon}$, so the notation is not that abusive after all).

Change variables $a \mapsto da$ and $l \mapsto dl$ where now $(a,l) = 1$.  Define the coefficients
\begin{equation*}
 \gamma_{a,l} = \sum_{d=1}^{\infty} \frac{P(\frac{2T}{ Bdl})  f\big(\frac{ad}{A}\big)}{d^{1-2iT}},
\end{equation*}
so that
\begin{equation}
\label{eq:IABafterPoissonBandStationaryPhase}
 I_{A,B} \ll \intR w_1\Big(\frac{t}{\Delta}\Big) \Big| \sum_{(a, l)=1}
\frac{\gamma_{a,l}}{a^{1/2 -it} (l+ax)^{1/2 - 2iT+it}}
\Big|^2 dt + O\Big(\frac{\Delta A}{BT}\Big).
\end{equation}
Note that $\gamma_{a,l} = 0$ unless $a \asymp \frac{M l}{T}$.  We have the trivial bound $|\gamma_{a,l}| \ll 1$, and so $\sum_{a,l} \frac{|\gamma_{a,l}|^2}{al} \ll \log T$. 
To finish the proof, we will apply \eqref{eq:MontgomeryVaughanVersion2} to \eqref{eq:IABafterPoissonBandStationaryPhase}.  Here $r$ corresponds to the pair $(a,l)$, and $\lambda_r = \lambda_{a,l} = \log(\frac{l + ax}{a})$.  
Here we have
\begin{equation*}
 |\lambda_{a_1, l_1} - \lambda_{a_2,l_2}| = \Big|\log\Big(1 + \frac{a_2(l_1 + a_1 x) - a_1(l_2 + a_2 x)}{a_1 (l_2 + a_2 x)}\Big) \Big| \gg \frac{|a_2 l_1 - a_1 l_2|}{a_1 l_2},
\end{equation*}
and so $|\lambda_{a_1, l_1} - \lambda_{a_2,l_2}| \gg \frac{1}{a_1 l_2} \gg \frac{B}{A T}$ under the assumption $(a_1, l_1) \neq (a_2, l_2)$.  Therefore,
\begin{equation}
\label{eq:IABfinalBoundBPoisson}
 I_{A,B} \ll \frac{\Delta A}{BT}+ \sum_{(a,l) = 1} \frac{|\gamma_{a,l}|^2}{a l} \Big(\Delta + \frac{AT}{B}\Big)   \ll \Big(\Delta + \frac{MT}{B^2}\Big) \log T.
\end{equation}

\subsection{Proof of Lemma \ref{lemma:bigA}}

The initial steps of the proof are quite similar to that of Lemma \ref{lemma:bigB}, except that
we apply Poisson summation in $a$ instead of $b$.  The later steps are different because we encounter a more difficult counting argument in this problem.
We have
\begin{equation*}
\sum_{a=1}^{\infty} \frac{e(abx)}{a^{1/2-it}} f(a/A) = \sum_{q \in \mz} \intR e((bx - q)u +
\frac{t}{2\pi} \log u) u^{-1/2} f(u/A) du.
\end{equation*}
By \cite[Lemma 8.1]{BKY}, the integral is $O(A^{1/2} \Delta^{-100})$ unless $|bx-q| \asymp \frac{\Delta}{A}$, that is, 
\begin{equation} 
\label{eq:xqbapproximation}
 \Big|x-\frac{q}{b}\Big| \asymp \frac{\Delta}{M}. 
\end{equation} 
Here we are using the fact that $w_1$ has support on $[1,2]$, so that $t \asymp \Delta$.
Assuming \eqref{eq:xqbapproximation} holds, the integral can be asymptotically evaluated using stationary phase.  The conditions of \cite[Proposition 8.2]{BKY} are not quite met if $\Delta$ is small compared to $A$.  Instead, we may quote \cite[Lemma 5.5.6]{Huxley} (the reader may also try to derive a weighted version of \cite[Corollary 8.15]{IK}).  In this way, we have
\begin{equation*}
 \sum_{a=1}^{\infty} \frac{e(abx)}{a^{1/2-it}} f(a/A) = (t/e)^{it} \sum_{q \in \mz} \frac{P\big(\frac{t}{A(q-bx)}\big)}{(q-bx)^{1/2+it}} + O\Big(\frac{A^{1/2}}{\Delta^{3/2}}\Big),
\end{equation*}
where as in the proof of Lemma \ref{lemma:bigB}, $P(v)$ is a smooth function supported on $v \asymp 1$, satisfying $P^{(j)}(v) \ll 1$ (in fact, $P$ here is a constant multiple of $f$). 
 Hence
\begin{equation}
\label{eq:IABafterPoissonA}
I_{A,B} \ll \intR w_1\Big(\frac{t}{\Delta}\Big) \Big| \sum_{b,q} \frac{g(b/B)P\big(\frac{t}{A(q-bx)}\big) }{ b^{1/2+2iT-it} (q-bx)^{1/2+it}} \Big|^2 dt + O(AB \Delta^{-2}).
\end{equation}
The error term is satisfactory.

By analogy with \eqref{eq:IABfinalBoundBPoisson}, one might expect that $I_{A,B} \ll (\Delta + \frac{B \Delta}{A}) T^{\varepsilon}$, and note $\frac{B \Delta}{A} = \Delta \frac{B^2}{M} = M \frac{\Delta}{A^2}$ (and if $M = N =\sqrt{\Delta T}$, this is $N \frac{B^2}{T}$).
However, there are some differences so this guess is not entirely reliable, yet partially hints at the truth.  

Next we perform some simplifications, towards separation of variables.  Write $(b,q) = d$, and set $b = db_0$, $q = d q_0$ where now $(q_0, b_0) = 1$.  In the summation, we enforce the condition \eqref{eq:xqbapproximation} (of course $q/b = q_0/b_0$), which is redundant to the support of $P$.  We then obtain
\begin{equation*}
 I_{A,B} \ll \intR w_1\Big(\frac{t}{\Delta}\Big) \Big| \sum_d \frac{1}{d^{1+2iT}} \sum_{\substack{(b_0, q_0) = 1 \\ |x-\frac{q_0}{b_0}| \asymp M^{-1} \Delta}} \frac{g(db_0/B)P\Big(\frac{t}{Ad(q_0-b_0x)}\Big) }{ b_0^{1/2+2iT-it} (q_0-b_0x)^{1/2+it}} \Big|^2 dt + O(AB \Delta^{-2}).
\end{equation*}
By the same separation of variables method as in the proof of Lemma \ref{lemma:F0integralSimplified}, we can remove the weight function $P$ by a Mellin transform.  Let us say that the integration variable attached to $P$ is $v$.  Changing variables $t \rightarrow t-v$, and letting $2T' = 2T + v$, we obtain, for some smooth, nonnegative, compactly-supported function $w_3$,
\begin{equation}
\label{eq:IABboundwithRationalApproximation}
I_{A,B} \ll \max_{|T'-T| \ll \Delta^{\varepsilon}} \intR w_3\Big(\frac{t}{\Delta}\Big)
 \Big| \sum_d \frac{1}{d^{1+2iT'}} \sum_{\substack{(b_0, q_0) = 1 \\ |x-\frac{q_0}{b_0}| \asymp M^{-1} \Delta}} \frac{g(db_0/B) }{ b_0^{\frac12+2iT'-it} (q_0-b_0x)^{\frac12+it}} \Big|^2 dt + \frac{M}{\Delta^{2}}.
\end{equation}
Define the following coefficients $\alpha_b$ by
\begin{equation*}
\alpha_{b} = b^{-2iT'} \sum_d \frac{g(db/B)}{d^{1+2iT'}},
\end{equation*}
so with this notation
\begin{equation*}
I_{A,B} \ll 
\max_{|T'-T| \ll \Delta^{\varepsilon}} \intR w_3\Big(\frac{t}{\Delta}\Big)
 \Big| \sum_{\substack{(b_0, q_0) = 1 \\ |x-\frac{q_0}{b_0}| \asymp M^{-1} \Delta}} \frac{ \alpha_{b_0} }{ b_0^{1/2-it} (q_0-b_0x)^{1/2+it}} \Big|^2 dt + 
 \frac{M}{\Delta^{2}}
 .
\end{equation*}
  Note the trivial bound $\alpha_b \ll 1$.  We also have that
 \begin{equation}
  \label{eq:alphabbounds}
 \alpha_b \ll (\log T)^{-100}, \quad  \text{for } \quad  b \ll \frac{B}{(\log T)^{2/3+\varepsilon}},
 \end{equation}
by using a Mellin transform, and  the Vinogradov--Korobov bound (Lemma \ref{lemma:VinogradovKorobov}).
 
 We also choose a nonnegative $w_4$ so that $w_3 \leq w_4$ and $\widehat{w_4}$ has compact support (cf. \cite{Vaaler}).

Next we open the square and perform the integration.  For notational simplicity, we drop the max over $T'$ by assuming that $T'$ is chosen to maximize the expression.  Then we obtain
\begin{multline*}
I_{A,B} \ll \Delta 
\sum_{\substack{(b_1, q_1) = 1 \\ |x-\frac{q_1}{b_1}| \asymp M^{-1} \Delta}} \frac{|\alpha_{b_1}|}{(b_1 (q_1-b_1x))^{1/2}}
\\
\times \sum_{\substack{(b_2, q_2) = 1 \\ |x-\frac{q_2}{b_2}| \asymp M^{-1} \Delta}}  \frac{|\alpha_{b_2}|}{(b_2 (q_2-b_2x))^{1/2}}
|\widehat{w_4}|\Big(\frac{\Delta}{2\pi} \log\Big(\frac{b_2(q_1-b_1x)}{b_1(q_2-b_2x)}\Big) \Big),
\end{multline*}
plus $O(M \Delta^{-2})$.  
Using
\begin{equation*}
 \log\Big(\frac{b_2(q_1-b_1x)}{b_1(q_2-b_2x)}\Big) 
 =
 \log\Big(1+\frac{b_2 q_1 - b_1 q_2}{b_1(q_2 - b_2 x)}\Big),
\end{equation*}
and the compact support of $\widehat{w_4}$, we may further restrict attention to integers $b_1, b_2, q_1, q_2$ satisfying
\begin{equation}
\label{eq:q1b1q2b2}
 \Big|\frac{q_1}{b_1} - \frac{q_2}{b_2}\Big| \ll M^{-1},
\end{equation}
Therefore, we have
\begin{equation}
\label{eq:IABtworationalapproximations}
I_{A,B} \ll \frac{M}{\Delta^2} + M 
\sum_{\substack{(b_1, q_1) = 1 \\ |x-\frac{q_1}{b_1}| \asymp M^{-1} \Delta}} 
\thinspace \thinspace
\sumstar_{\substack{(b_2, q_2) = 1 \\ |x-\frac{q_2}{b_2}| \asymp M^{-1} \Delta}} 
\frac{|\alpha_{b_1} \alpha_{b_2}|}{b_1 b_2} ,
\end{equation}
where the star denotes that \eqref{eq:q1b1q2b2} holds.

Consider the diagonal contribution to \eqref{eq:IABtworationalapproximations}, that is, the terms with $q_1=q_2$ and $b_1 = b_2$.  These terms contribute to $I_{A,B}$ an amount that is
\begin{equation*}
 \ll M \sum_{b \ll B} \frac{|\alpha_b|^2}{b^2} \Big(1 + \frac{\Delta b}{M}\Big).
\end{equation*}
Using $\alpha_b \ll 1$, the part with $\frac{\Delta b}{M}$ contributes $O(\Delta  \log T)$. On the other hand,
using the trivial bound $\alpha_b \ll 1$ for $b \gg (\log T)^{100}$, and with $\alpha_b \ll (\log T)^{-100}$ for $b \ll (\log T)^{100}$, we obtain that 
\begin{equation*}
M \sum_{b \ll B} b^{-2} |\alpha_b|^2 \ll M (\log T)^{-100}
\ll M |\zeta(1+2iT)|^2 (\log T)^{-50}.
\end{equation*}
Thus the diagonal terms are bounded in accordance with Lemma \ref{lemma:bigA}.

Now consider the off-diagonal terms where $q_1/b_1 \neq q_2/b_2$.  
Let us restrict to say $b_1 \asymp X_1$, and $b_2 \asymp X_2$, where the $X_i$ range over dyadic segments, with $1 \ll X_i \ll B$.
We first claim the number of solutions in $(q_1,b_1)$ to \eqref{eq:xqbapproximation} with $b_1 \asymp X_1$ is $\ll 1 + \frac{\Delta X_1^2}{M}$.  To see this, note that the term ``1" accounts for a potential solution (this certainly cannot be improved for arbitrary $x$ since given $q,b$ there exist $x$ satisfying \eqref{eq:xqbapproximation}).  Once such a solution is found, we can estimate the number of others by noting
that the spacing between two reduced fractions of denominator of size $X_1$ is $\gg X_1^{-2}$, so the total number of such fractions that can be packed into an interval of length $L$ is bounded from above by $1 + L X_1^2$. This proves the claim. 
By a similar argument, for each choice of $(q_1, b_1)$, the number of $(q_2, b_2)$ with $0 < |\frac{q_1}{b_1} - \frac{q_2}{b_2}| \ll M^{-1}$ is $\ll \frac{X_1 X_2}{M}$.

Thus the bound on the off-diagonal terms is
\begin{equation*}
\ll \sum_{X_1, X_2 \text{ dyadic}} M \Big(1 + \frac{\Delta X_1^2}{M}\Big) \frac{1 }{X_1 X_2} \frac{X_1 X_2}{M} \ll \Big(\log T + \frac{\Delta B^2}{M}\Big) 
\log  T,
\end{equation*}
which completes the proof. 

\subsection{Proof of Lemma \ref{lemma:smallB}}
When $b$ is very small (and so $a$ is very large) then we treat the problem in a similar way to the early steps of \cite{DFI}.  We go back to the original definition \eqref{eq:IB0def}, open the square, and execute the integral, obtaining
 \begin{equation*}
  I_{B_0} = \Delta \sum_{a_1, a_2, b_1, b_2} 
  \frac{w_2\Big(\frac{a_1 b_1}{N}\Big)w_2\Big(\frac{a_2b_2}{N}\Big)g_0\Big(\frac{b_1}{B_0}\Big)g_0\Big(\frac{b_2}{B_0}\Big) e((a_1 b_1 - a_2 b_2)x)}{(a_1 a_2 b_1 b_2)^{1/2} (b_1/b_2)^{2iT}}
\widehat{w_1}\Big(-\frac{\Delta}{2\pi} \log \frac{a_1 b_1}{a_2 b_2} \Big).
 \end{equation*}
Our plan is to solve the equation $a_1 b_1 - a_2 b_2 = h \neq 0$ via Poisson summation in $a_2 \pmod{b_1}$.  The contribution to $I_{B_0}$ from $h=0$ is $O(\Delta \log T)$, by a trivial bound.  Let $I_{B_0}'$ be the contribution to $I_{B_0}$ from $h \neq 0$.
First we extract the greatest common divisor $d=(b_1, b_2)$ which necessarily divides $h$.  In this way we obtain
\begin{equation}
\label{eq:IB0expression}
I_{B_0}' =  \Delta \sum_{h \neq 0} e(hx) \sum_{d | h} \sum_{(b_1, b_2)=1} 
 \frac{g_0\Big(\frac{db_1}{B_0}\Big)g_0\Big(\frac{db_2}{B_0}\Big)}{(db_1)^{1/2 + 2iT} (db_2)^{1/2-2iT}} 
 R(b_1, b_2, h),
\end{equation}
where
\begin{equation*}
 R(b_1, b_2, h) = \sum_{a_1 b_1 - a_2 b_2 = \frac{h}{d}} \frac{w_2\Big(\frac{da_1 b_1}{M}\Big)w_2\Big(\frac{da_2b_2}{M}\Big)}{(a_1 a_2)^{1/2}}
 \widehat{w_1}\Big(-\frac{\Delta}{2\pi} \log \Big(1+ \frac{h/d}{a_2 b_2}\Big) \Big).
\end{equation*}
We next re-interpret the equation $a_1 b_1 - a_2 b_2 = \frac{h}{d}$ as a congruence $a_2 \equiv - \overline{b_2} \frac{h}{d} \pmod{b_1}$, and replace every occurrence of $a_1$ with $\frac{\frac{h}{d} + a_2 b_2}{b_1}$.
Therefore by Poisson summation in $a_2 \pmod{b_1}$, we have
\begin{multline*}
 R(b_1, b_2, h) = \frac{1}{b_1} \sum_{l \in \mz} e\Big(\frac{l \overline{b_2}(h/d)}{b_1}\Big) 
 \\
 \int_0^{\infty} 
 \frac{w_2\Big(\frac{h + d rb_2 }{M}\Big)w_2\Big(\frac{drb_2}{M}\Big)}{(\frac{h + dr b_2}{db_1} r)^{1/2}}
 \widehat{w_1}\Big(-\frac{\Delta}{2\pi} \log \Big(1+ \frac{h/d}{r b_2}\Big) \Big) e\Big(-\frac{l r}{b_1}\Big) dr.
\end{multline*}
Here the integral is of the form $\widehat{q}(y) = \int q(r) e(-ry) dr$ where $q$ is a function supported on $r \asymp \frac{M}{db_2} \gg \frac{M}{B_0}$, satisfying $q^{(j)}(r) \ll (\frac{M}{db_2})^{-j-1}$, and with $y = \frac{l}{b_1}$.  Thus $\widehat{q}(y) \ll_j (\frac{|y|M}{db_2})^{-j}$, by integration by parts. 
Now we have, with $y = l/b_1$, that
\begin{equation*}
 \frac{|y| M}{d b_2} = \frac{|l|M}{d b_1 b_2} \gg |l| \frac{M d}{B_0^2 } \gg |l| M T^{-2\alpha},
\end{equation*}
and since we assume $M \gg T^{1/2-\varepsilon}$, and $\alpha < 1/4$, if $l \neq 0$, then $\widehat{q}(l/b_1) \ll (lT)^{-100}$ which is very small.  For this reason, we only need to consider $l=0$.

Remark.  Here $b$ is small enough that Poisson summation conveniently leads only to the zero frequency, i.e., $l = 0$.  As in \cite[proof of Theorem 1]{DFI}, one can certainly consider larger values of $b$ which lead to nonzero frequencies.  However,  if $b$ is large compared to $a$, we should apply Poisson summation in $b$.  These arguments are used in the proof of Lemma \ref{lemma:ShiftedConvolutionWithBsummed}.


Now we return to the proof.  We have
\begin{multline}
\label{eq:IB0hnonzeroSum}
I_{B_0}' = \Delta \sum_{h \neq 0} e(hx) \sum_{\substack{d | h }} \sum_{(b_1, b_2)=1} 
 \frac{g_0\Big(\frac{db_1}{B_0}\Big)g_0\Big(\frac{db_2}{B_0}\Big)}{(db_1)^{1/2 + 2iT} (db_2)^{1/2-2iT}} 
 \\
 \times \frac{1}{b_1} \int_0^{\infty} 
 \frac{w_2\Big(\frac{h + dr b_2 }{M}\Big)w_2\Big(\frac{drb_2}{M}\Big)}{(\frac{h + dr b_2}{db_1} r)^{1/2}}
 \widehat{w_1}\Big(-\frac{\Delta}{2\pi} \log \Big(1+ \frac{h/d}{r b_2}\Big) \Big) dr,
\end{multline}
plus a small error of size $O(T^{-100})$.  
By changing variables $r \rightarrow \frac{r}{db_2}$, we obtain that \eqref{eq:IB0hnonzeroSum} equals
\begin{equation}
\label{eq:IB0expression2}
 \Delta \sum_{h \neq 0} e(hx) \sum_{\substack{d | h }} \frac{1}{d} \sum_{(b_1, b_2)=1} 
 \frac{g_0\Big(\frac{db_1}{B_0}\Big)g_0\Big(\frac{db_2}{B_0}\Big)}{b_1^{1 + 2iT} b_2^{1-2iT}} 
 \int_0^{\infty} 
 \frac{w_2\Big(\frac{  r}{M}\Big) w_2\Big(\frac{  r+h}{M}\Big)}{(r(r+h))^{1/2}}
 \widehat{w_1}\Big(-\frac{\Delta}{2\pi} \log\Big(1 + \frac{h}{r}\Big) \Big) dr.
\end{equation}
At this point, the integral is independent of $b_1$ and $b_2$.  Note that if we restrict attention to $d \geq D > 1$, then by a trivial bound, we obtain
\begin{equation*}
 \Delta \sum_{d \geq D} \frac{1}{d} \frac{M}{d \Delta} \log^2 B_0 \ll \frac{M}{D} \log^2 T.
\end{equation*}
We set $D = B_0^{1/2}$, so that this error term is satisfactory.

Next we claim that if $d \ll D = B_0^{1/2}$, then
\begin{equation}
\label{eq:b1b2sumevaluation}
 \sum_{(b_1, b_2)=1} 
 \frac{g_0\Big(\frac{db_1}{B_0}\Big)g_0\Big(\frac{db_2}{B_0}\Big)}{b_1^{1 + 2iT} b_2^{1-2iT}}  = \frac{|\zeta(1+2iT)|^2}{\zeta(2)} + O_C( (\log T)^{-C}),
\end{equation}
where $C > 0$ is arbitrarily large.  
For this, we simply apply a double Mellin transform, obtaining that the left hand side of \eqref{eq:b1b2sumevaluation} equals
\begin{equation*}
 \Big(\frac{1}{2\pi i} \Big)^2 \int_{(1)} \int_{(1)} \Big(\frac{B_0}{d}\Big)^{s_1 + s_2} \widetilde{g_0}(s_1) \widetilde{g_0}(s_2) \frac{\zeta(1 + 2iT + s_1) \zeta(1 - 2iT + s_2)}{\zeta(2+s_1 + s_2)} ds_1 d_2.
\end{equation*}
 Integrating by parts once, we have that for $\text{Re}(s) > 0$,
\begin{equation*}
 \widetilde{g_0}(s) = \frac{-1}{s} \int_0^{\infty} g_0'(x) x^s dx.
\end{equation*}
Since $g_0$ is identically $1$ for $0 \leq x <1$, $\widetilde{g_0}$ has a pole at $s=0$ (and nowhere else). 
Furthermore, $\text{Res}_{s=0} \widetilde{g_0}(s) = - \int_0^{\infty} g_0'(x) dx = 1$.
We move the $s_1$ integral to $\sigma_1 = - \frac{\alpha}{(\log T)^{2/3}}$, some $\alpha > 0$, followed by the same process for the $s_2$ integral.  
By the rapid decay of $\widetilde{g_0}$, the residues at $s_1 = -2iT$ and $s_2 = 2iT$ are very small.  
Using Lemma \ref{lemma:VinogradovKorobov} (Vinogradov--Korobov), we obtain that the left hand side of \eqref{eq:b1b2sumevaluation} equals
\begin{equation*}
 \frac{|\zeta(1+2iT)|^2}{\zeta(2)}  + O( (\log T)^{4/3} \exp\Big(-\beta \frac{\log B_0}{(\log T)^{2/3}}\Big),
\end{equation*}
for some constant $\beta > 0$.
Since we assume $B_0 \gg T^{\delta}$, this error term is $O( (\log T)^{-C})$ for $C > 0$ arbitrary.

Applying \eqref{eq:b1b2sumevaluation} to \eqref{eq:IB0expression2} and extending the $d$-sum back to all the divisors of $h$ (without making a new error term), we obtain for arbitary $C> 0$
\begin{multline*}
I_{B_0}' = \Delta \frac{|\zeta(1+2iT)|^2}{\zeta(2)} \int_0^{\infty} \sum_{h \neq 0} \sigma_{-1}(h) e(hx)
 \frac{w_2\Big(\frac{  r}{M}\Big) w_2\Big(\frac{  r+h}{M}\Big)}{(r(r+h))^{1/2}}
    \widehat{w_1}\Big(-\frac{\Delta}{2\pi } \log\Big(1 + \frac{h}{r}\Big) \Big) dr
    \\
  + O_C\Big( \frac{N}{(\log T)^{C}}\Big).
\end{multline*}
At this point we can use Taylor expansions (think of $\frac{h}{M} \ll \frac{1}{\Delta}$, even though this is not literally true), giving now
\begin{equation*}
 I_{B_0}' = \Delta \frac{|\zeta(1+2iT)|^2}{\zeta(2)} \int_0^{\infty} 
 \frac{w_2(r)^2}{r}
   \sum_{h \neq 0} \sigma_{-1}(h) e(hx) \widehat{w_1}\Big(-\frac{\Delta h}{2\pi M r }  \Big) dr
  + O\Big( \frac{M |\zeta(1+2iT)|^2}{ (\log T)^{100}} \Big).
\end{equation*}
By comparison to the expression for $MT_{OD}$ given by \eqref{eq:MTODmainterm}, we have shown that these terms equal $M \cdot S_{\Delta}$ plus a satisfactory error term.  We already showed that if $w_1$ has support on the positive reals, then $S_{\Delta}$ satisfies \eqref{eq:SDeltaProperty}.  If $w_1$ has support on $\mathbb{R}$, then we have $S_{\Delta} \ll 1$, by the remark following Lemma \ref{lemma:QxH}.  This completes the proof.

\subsection{Proof of Lemma \ref{lemma:ShiftedConvolutionWithBsummed}}
We return to the definition of $I_{A,B}$ as 
\begin{equation*}
 I_{A,B} = \frac{1}{M} \intR w_1\Big(\frac{t}{\Delta}\Big)
 \Big| \sum_{a,b} \frac{e(abx)}{a^{-it} b^{2iT-it}} f_1\Big(\frac{a}{A}\Big) 
 g_1\Big(\frac{b}{B}\Big)
 \Big|^2
 dt,
\end{equation*}
where $g_1(x) = x^{-1/2} g(x)$, and $f_1(x) = x^{-1/2} f(x)$.
We square out the sum and perform the $t$-integral, obtaining
\begin{equation*}
 I_{A,B} = \frac{\Delta}{M}
 \sum_{h \in \mathbb{Z}} e(hx)
 \sum_{\substack{a_1, a_2, b_1, b_2 \\ a_1 b_1 - a_2 b_2 = h}}
 \frac{\widehat{w_1}\Big(\frac{-\Delta}{2 \pi} \log\Big(1 + \frac{h}{a_2 b_2}\Big)\Big)  }{ (b_1/b_2)^{2iT}} 
 f_1\Big(\frac{a_1}{A}\Big) f_1\Big(\frac{a_2}{A}\Big) 
 g_1\Big(\frac{b_1}{B}\Big) g_1\Big(\frac{b_2}{B}\Big).
\end{equation*}
From the term $h=0$, we easily derive a bound of size $O(\Delta \log T)$, consistent with \eqref{eq:IABboundFromShiftedConvolutionWithBsummed}.
Let $I_{A,B}^{OD}$ denote the terms from $h \geq 1$.  By symmetry, it suffices to bound these terms.

Let $d = (a_1, a_2)$, which necessarily divides $h$, and change variables $a_i \rightarrow d a_i$, $h \rightarrow d h$, giving
\begin{equation*}
 I_{A,B}^{OD} = \frac{\Delta}{M}
\sum_{d }  \sum_{h \geq 1} e(dhx)  
 \sum_{\substack{(a_1, a_2) =1  \\  a_1 b_1 - a_2 b_2 = h}}
 \frac{\widehat{w_1}\Big(\frac{-\Delta}{2 \pi} \log\Big(1 + \frac{h}{a_2 b_2}\Big)\Big)  }{ (b_1/b_2)^{2iT}} 
 f_1\Big(\frac{da_1}{A}\Big) f_1\Big(\frac{da_2}{A}\Big) 
 g_1\Big(\frac{b_1}{B}\Big) g_1\Big(\frac{b_2}{B}\Big)
 .
\end{equation*}
Note that the terms with $h \gg \frac{M}{d \Delta} T^{\varepsilon}$ may be bounded by $O(T^{-100})$, from the rapid decay of $\widehat{w}$.  Next we interpet the equation $a_1 b_1 - a_2 b_2 = h$ as a congruence $b_2 \equiv -\overline{a_2} h \pmod{a_1}$, where we need to substitute $b_1 = \frac{a_2 b_2 + h}{a_1}$ wherever it appears.  To aid in this, we observe that
\begin{equation*}
 \Big(\frac{b_1}{b_2}\Big)^{2iT} = \Big(\frac{a_1}{a_2}\Big)^{-2iT} e^{2iT \log(1+\frac{h}{a_2 b_2})}.
\end{equation*}
Therefore, we have
\begin{multline*}
 I_{A,B}^{OD}
 = \frac{\Delta}{M} \Big[
\sum_{d \ll A } \sum_{
\substack{(a_1, a_2) = 1 \\ a_i \asymp A/d}} (a_1/a_2)^{2iT}
f_1\Big(\frac{da_1}{A}\Big) f_1\Big(\frac{da_2}{A}\Big)
\sum_{1 \leq h \ll \frac{M}{d\Delta} T^{\varepsilon}} e(dhx)  
 \\
 \sum_{b_2 \equiv - \overline{a_2} h \shortmod{a_1}}
 e^{-2iT \log(1+\frac{h}{a_2 b_2})}
 \widehat{w_1}\Big(\frac{-\Delta}{2 \pi} \log\Big(1 + \frac{h}{a_2 b_2}\Big)\Big)  
 g_1\Big(\frac{a_2 b_2 +h}{a_1 B}\Big) g_1\Big(\frac{b_2}{B}\Big)
 \Big]
 + O(T^{-100}).
\end{multline*}

We clean this up a bit by the approximations $g_1(\frac{a_2 b_2 + h}{a_1 B}) = g_1(\frac{a_2 b_2}{a_1 B}) + O(\frac{h}{a_1 B})$, and 
\begin{equation*}
 \widehat{w_1}\Big(\frac{-\Delta}{2 \pi} \log\Big(1 + \frac{h}{a_2 b_2}\Big)\Big) 
 =\widehat{w_1}\Big(\frac{-\Delta h }{2 \pi a_2 b_2} \Big) 
 + O\Big(\frac{\Delta h^2}{a_2^2 b_2^2} \Big(1+\frac{\Delta h}{a_2 b_2}\Big)^{-100}\Big).
\end{equation*}
The error terms in these approximations contribute at most $O(\frac{M}{\Delta})$ to $I_{A,B}$.  We also apply a dyadic partition of unity to the sum over $h$, with a generic piece denoted by $w_3(h/H)$, where $H$ runs over dyadic numbers $\ll \frac{M}{d \Delta} T^{\varepsilon}$.

Now define
\begin{equation*}
 U(v,y) = \widehat{w_1}\Big(\frac{-\Delta v }{2 \pi a_2 y} \Big) g_1\Big(\frac{a_2 y}{a_1 B}\Big) g_1\Big(\frac{y}{B}\Big) w_3\Big(\frac{v}{H}\Big),
\end{equation*}
which is supported on $v \asymp H$, $y \asymp B$, and therein satisfies the bounds
\begin{equation*}
 x^m y^n U^{(m,n)}(v,y) \ll_{m,n} 1,
\end{equation*}
uniformly in the auxiliary variables $a_1, a_2, d, \Delta, T, H,\dots$.  In the terminology of \cite{KiralPetrowYoung}, this means that $U$ forms a $1$-inert family of functions.

Therefore,
\begin{equation*}
 I_{A,B}^{OD} \ll \frac{\Delta}{M}
\sum_{d \ll A } \sum_{
\substack{(a_1, a_2) = 1 \\ a_i \asymp A/d}}
\sum_{H \text{ dyadic}}
|D_{a_1, a_2, d}| + O\Big(\frac{M}{\Delta}\Big),
\end{equation*}
where
\begin{equation*}
 D_{a_1, a_2, d} = \sum_{1 \leq h \ll \frac{M}{d\Delta} T^{\varepsilon}} e(dhx)  
 \sum_{b_2 \equiv - \overline{a_2} h \shortmod{a_1}}
 e^{-2iT \log(1+\frac{h}{a_2 b_2})}
U(h,b_2).
\end{equation*}

We now turn to the estimation of $D$, where the first step is applying Poisson summation in $b_2$.  We obtain
\begin{equation*}
 D_{a_1,a_2,d} = \sum_{1 \leq  h  \ll \frac{M}{d\Delta} T^{\varepsilon}} e(dhx)  
 \frac{1}{a_1}  \sum_{\nu \in \mz} e\Big(\frac{-\nu h \overline{a_2}}{a_1}\Big) 
 \intR e^{-2iT \log(1+\frac{h}{a_2 t}) - 2 \pi i \frac{\nu t}{a_1}}
 U(h,t) dt.
\end{equation*}
Write the phase above as
\begin{equation*}
 \phi(t) = - 2T \log\Big(1+\frac{h}{a_2 t}\Big) - 2 \pi  \frac{\nu t}{a_1},
\end{equation*}
and note
\begin{equation*}
 \phi'(t) = \frac{2T h}{t(a_2 t+h)}  - 2 \pi \frac{\nu}{a_1} = 
 \frac{2Th}{a_2 t^2} \Big(1 + O\Big(\frac{h}{a_2 B}\Big)\Big) - 2 \pi  \frac{\nu }{a_1}.
\end{equation*}
We have
\begin{equation*}
 \frac{h}{a_2 B} \ll \frac{T^{\varepsilon}}{\Delta},
 \qquad \text{and}
 \qquad B \frac{T h}{a_2 B^2} \gg \frac{T}{M} \gg T^{\delta}.
\end{equation*}
Taken together, these bounds show that $B |\phi'(t)| \gg T^{\delta}$ for all $t$ in the support of $G$ unless
\begin{equation}
\label{eq:nusize}
  \nu  \asymp  \frac{T H}{B^2},
\end{equation}
so in particular $\nu > 0$. 
By  \cite[Lemma 8.1]{BKY}, the condition $B |\phi'(t)| \gg T^{\delta}$ is sufficient to show that the integral is small, say $O(T^{-100})$, by repeated integration by parts, so from now on we suppose \eqref{eq:nusize} holds.  Our plan is to apply stationary phase analysis to the integral.  Note that
\begin{equation*}
- \phi''(t) = \frac{4 Th}{a_2 t^3} (1 + O(\Delta^{-1} T^{\varepsilon})) \asymp \frac{T H d}{M B^2}.
\end{equation*}
Moreover, $\phi'$ is monotone with a unique zero for $t \asymp B$ given by
\begin{align}
 t_0 = 
 \sqrt{\frac{h T a_1}{\pi \nu a_2}} [(1+\eta_0)^{1/2} - \eta_0^{1/2}],
\end{align}
where
\begin{equation*}
 \eta_0 = \frac{\pi h \nu }{4T a_1 a_2}.
\end{equation*}
Note that $\eta_0 \ll \Delta^{-2} T^{2 \varepsilon}$.  It is not hard to see that for $j \geq 1$, $\phi^{(j)}(t) \ll \frac{TH d}{M B^j}$.  We may then apply \cite[Main Theorem]{KiralPetrowYoung} with $Y = \frac{THd}{M}$, $Z=B$, $X_2 = H$.  Since $Y \gg T^{\delta}$ (using $hd \geq 1$ and $M \ll T^{1-\delta}$), the conditions to apply stationary phase are in place.  For forthcoming steps, we need to analyze $\phi(t_0)$.

Observe
\begin{equation*}
 \log\Big(1+\frac{h}{a_2 t_0}\Big) = \log(a_2 t_0 + h) - \log(a_2 t_0)
 = \log\Big(\frac{2T h a_1}{2 \pi \nu t_0}\Big) - \log(a_2 t_0),
\end{equation*}
which simplifies as $- 2 \log(\sqrt{1+\eta_0} - \sqrt{\eta_0})$.  Similarly, we have
\begin{equation*}
 2 \pi \frac{\nu}{a_1} t_0 = 4 T \sqrt{\eta_0}.
\end{equation*}
Therefore, we have
\begin{equation*}
 \begin{split}
  \phi(t_0) &= 4T  \log(\sqrt{1+\eta_0} - \sqrt{\eta_0})
 - 4 T \sqrt{\eta_0} (\sqrt{1+\eta_0} - \sqrt{\eta_0})
 \\
 &= -8T \eta_0^{1/2} (1 + c_{1} \eta_0^{1/2} + c_2 \eta_0 + c_{3} \eta_0^{3/2} + \dots),
 \end{split}
\end{equation*}
for certain constants $c_i$.  

By \cite[Main Theorem]{KiralPetrowYoung}, we conclude that
\begin{equation}
\label{eq:StationaryPhaseAsymptoticForb2integral}
 \intR e^{-2iT \log(1+\frac{h}{a_2 t}) - 2 \pi i \frac{\nu t}{a_1}}
 U(h,t)
 = \frac{B \sqrt{M}}{\sqrt{THd}} e^{-8iT \eta_0^{1/2}(1+\dots) } U_1(h) + O(T^{-200}),
\end{equation}
where $U_1(h) = U_1(h, \nu, a_1, a_2, \dots)$ 
satisfies the derivative bounds
\begin{equation}
\label{eq:U1derivativebounds}
 v^m U_1^{(m)}(v) \ll 1,
\end{equation}
with uniformity in all auxiliary variables.  One could alternatively apply, say,  \cite[Lemma 5.5.6]{Huxley} to obtain an asymptotic in \eqref{eq:StationaryPhaseAsymptoticForb2integral}, with a weaker (but sufficient) error term, and a weight function of the form $c U(h, t_0)$, for some constant $c$.  One would then need to manually check \eqref{eq:U1derivativebounds}, which is a somewhat tedious exercise.  In the same vein, \cite[Proposition 8.2]{BKY} develops a full asymptotic expansion for \eqref{eq:StationaryPhaseAsymptoticForb2integral}, but does not directly give the bounds on $U_1(h)$ with respect to $h$ (however, \cite[Main Theorem]{KiralPetrowYoung} builds on this work in a crucial way).

We insert \eqref{eq:StationaryPhaseAsymptoticForb2integral} back into $D$, obtaining
\begin{equation*}
 D_{a_1,a_2,d} =
 \sum_{\nu \asymp \frac{T H}{B^2}} \frac{B \sqrt{M}}{a_1 \sqrt{THd}}
 \sum_{h \asymp H} e(dhx)  
   e^{-8iT \eta_0^{1/2}(1+\dots) }  U_1(h) + O(T^{-100}).
\end{equation*}
To track our progress, note that the trivial bound applied to $D$ now shows $D \ll\frac{M \sqrt{T}}{\Delta^{3/2}}  T^{\varepsilon}$, while the original trivial bound is $D \ll \frac{MB}{\Delta A} T^{\varepsilon}$ (assuming $B \gg A$).  In the case of interest to us, when $A \approx \sqrt{\Delta}$ and $B \approx \sqrt{T}$, these bounds are equalized.  However, the new formula for $D$ is more accessible to understand the dependence on $h$.

Consider the exponential sum
\begin{equation*}
 \sum_{h \asymp H} e^{-8iT \eta_0^{1/2}(1+\dots) + 2\pi i dhx } U_1(h),
\end{equation*}
where the phase is of the form
\begin{equation*}
 \varphi(h) = - 8 T \sqrt{\gamma_0 h} (1 + c_{1} \sqrt{\gamma_0 h} + c_2 (\gamma_0 h) + \dots) + 2\pi dhx,
\end{equation*}
where $h\gamma_0 = \eta_0$.  For each $j \geq 2$, we have
\begin{equation*}
\varphi^{(j)}(h) =  \frac{T \sqrt{\gamma_0 h}}{h^j}
(d_0^{(j)} + d_{1}^{(j)} (\gamma_0 h)^{1/2} + \dots ),
\end{equation*}
for certain constants, with $d_0^{(j)} \neq 0$.  We conclude that
\begin{equation*}
 |\varphi^{(j)}(h)| \asymp \frac{T \sqrt{\gamma_0 H}}{H^j} \asymp \frac{dHT/M}{H^j}.
\end{equation*}
By partial summation, and Weyl's method (see \cite[Corollary 8.5]{IK} and check that the frequency $dHT/M$ is larger than the length $H$ of summation, since, $\frac{dHT}{M} \gg H T^{\delta}$), we conclude that
\begin{equation*}
 D_{a_1, a_2, d} \ll T^{\varepsilon} 
 \frac{d}{A} 
 \frac{TH}{B^2}
 \frac{B \sqrt{M}}{\sqrt{THd}} H^{1/2} (dHT/M)^{1/6} + T^{-100}
 \ll 
 T^{\varepsilon}
 \Big(\frac{T}{\Delta}\Big)^{2/3} \Big(\frac{M}{\Delta}\Big)^{1/2}.
\end{equation*}
Hence,
\begin{equation*}
\frac{\Delta}{M}
\sum_{d \ll A } \sum_{
\substack{(a_1, a_2) = 1 \\ a_i \asymp A/d}}
\sum_{H \text{ dyadic}}
|D_{a_1, a_2, d}|
\ll T^{\varepsilon} \frac{A^2}{\Delta}  \frac{(\Delta T)^{1/4}}{M^{1/2}} (\Delta T)^{1/2} \Big(\frac{\Delta}{T}\Big)^{1/12}.
\end{equation*}
This completes the proof of Lemma \ref{lemma:ShiftedConvolutionWithBsummed}.

\section{The smallest values of $\Delta$}
\label{section:verysmallDelta}
In this section we show Proposition \ref{prop:verysmallDelta}, that is, 
\begin{equation}
\label{eq:Fintegraltverysmall}
\int_{|t-T| \ll T^{1/100}} |F(it)|^2 dt \ll (\log T)^{\varepsilon}.
\end{equation}

The analysis from Section \ref{section:initialdevelopments} does not cover all of this range because Stirling's asymptotic is no longer applicable (one of the gamma factors making up $\gamma_{V_T}(1/2+it)$ is evaluated at imaginary part $\approx |T-t|$).  Instead, we go back to the original definition \eqref{eq:Fformula1}.  Obviously,
\begin{equation*}
\intR |\widetilde{\psi}(1/2 + it \pm iT)|^2 dt \ll 1,
\end{equation*}
so that by Cauchy--Schwarz, we have
\begin{equation}
\label{eq:KDeltaSum}
 \int_{|t-T| \ll T^{1/100}} |F(it)|^2 dt \ll 1 + \frac{1}{|\zeta(1+2iT)|^2 } \sum_{\substack{\Delta \ll T^{1/100} \\ \text{dyadic}}} K(\Delta, T, x),
\end{equation}
where
\begin{equation*}
K(\Delta, T, x) =  \int_{|t-T| \asymp \Delta} \Big|
\sum_{n \geq 1} \frac{\tau_{iT}(n)
e(nx)}{n^{1/2 + it}} 
k(n,t,T)
 \Big|^2 dt,
\end{equation*}
and where with $\sigma > -\frac12$, we have
\begin{equation*}
 k(n,t,T) = \frac{1}{2 \pi i} \int_{(\sigma)} \widetilde{\psi}(-u) n^{-u}
\cosh(\pi T/2) \gamma_{V_T}(1/2 + it+u) du.
\end{equation*}
When $\Delta \asymp 1$, we need to re-interpret $|t-T| \asymp \Delta$ as $|t-T| \ll 1$, but we do not make new notation for this range. 

Next we split the $n$-sum into three intervals: 
$n \ll N_0^{-}$, 
$N_0^{-} \ll n \ll N_0^{+}$, 
and 
$n \gg N_0^{+}$, where
\begin{equation*}
 N_0^{-} = \frac{(\Delta T)^{1/2}}{(\log T)^2}, \qquad 
 N_0^{+} = (\Delta T)^{1/2}(\log T)^{\varepsilon}.
\end{equation*}
Let us call these three intervals $\mathcal{N}_i$, $i=1,2,3$, respectively.
To simplify the analysis, we suppose that these three ranges are detected by a smooth partition of unity, say $\gamma_1 + \gamma_2 + \gamma_3 = 1$, with $\gamma_i$ respective to $\mathcal{N}_i$.

Our goal is to relate $K(\Delta, T, x)$ to expressions to which we may apply the results of Section \ref{section:smallmediumDelta}.  To this end, we have
\begin{mylemma}
\label{lemma:QWERTYbound}
 Suppose that $\Delta \gg (\log T)^{\varepsilon}$.  Then
 \begin{equation}
 \label{eq:QWERTYbound1}
 K(\Delta, T, x) \ll  \int_{|t-T| \asymp \Delta} (\Delta T)^{-\half} 
\Big| \sum_{N_0^{-} \ll n \ll N_0^{+}} \frac{\tau_{iT}(n)
e(nx) \gamma_2(n)}{n^{1/2 +it}} \Big|^2  dt + \frac{|\zeta(1+2iT)|^2}{(\log T)^{2}}.
\end{equation}
 If $\Delta \ll (\log T)^{\varepsilon}$, then
\begin{equation}
\label{eq:QWERTYbound2}
 K(\Delta, T, x) \ll_{\varepsilon}  \int_{|t-T| \ll (\log T)^{\varepsilon}} T^{-\half} 
\Big| \sum_{N_0^{-} \ll n \ll N_0^{+}} \frac{\tau_{iT}(n)
e(nx) \gamma_2(n)}{n^{1/2 +  it}} \Big|^2  dt + \frac{|\zeta(1+2iT)|^2}{(\log T)^{2}}.
\end{equation}
\end{mylemma}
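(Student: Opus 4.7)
The plan is to split the inner sum using the partition of unity $\gamma_1+\gamma_2+\gamma_3 = 1$ and Cauchy's inequality, writing $K(\Delta,T,x)\ll K_1 + K_2 + K_3$, where $K_j$ is the analogue of $K(\Delta,T,x)$ with the $n$-sum weighted by $\gamma_j$. I will show that the tails $K_1$ and $K_3$ are absorbed into the error term $|\zeta(1+2iT)|^2(\log T)^{-2}$, while $K_2$ produces the displayed estimate.

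The essential input is a pointwise bound on $\cosh(\pi T/2)\gamma_{V_T}(1/2+it+u)$, obtained by applying Stirling's formula to the two gamma factors in \eqref{eq:gammaVformula}. For $|t-T|\asymp\Delta$ with $\Delta\gg 1$, $\mathrm{Re}(u)=\sigma\in(-\tfrac12+\varepsilon, A]$, and $|\mathrm{Im}(u)|\leq\Delta/100$, one obtains
\[
\cosh(\pi T/2)\,|\gamma_{V_T}(1/2+it+u)|\ll (T\Delta)^{\sigma/2-1/4}.
\]
The analogous bound when $\Delta\ll(\log T)^\varepsilon$ is $T^{\sigma/2-1/4}$, since the second gamma factor then contributes only $O(1)$. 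No poles of $\gamma_{V_T}$ are encountered as $\sigma$ varies over this interval.

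For each $K_j$ I would mimic the separation-of-variables argument in the proof of Lemma \ref{lemma:F0integralSimplified}: substitute the Mellin representation of $k(n,t,T)$, apply Cauchy-Schwarz to the $u$-integral, change variables $t\mapsto t-\mathrm{Im}(u)$, and use the rapid decay of $\widetilde\psi$ to truncate $|\mathrm{Im}(u)|$. This produces
\[
K_j \ll (T\Delta)^{\sigma-1/2}\int_{|t-T|\ll R}\Big|\sum_{n}\frac{\tau_{iT}(n)e(nx)\gamma_j(n)}{n^{1/2+\sigma+it}}\Big|^2 dt,
\]
with $R\asymp \Delta$ when $\Delta\gg(\log T)^\varepsilon$, and $R\asymp(\log T)^\varepsilon$ otherwise (the prefactor becoming $T^{\sigma-1/2}$). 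Applying Lemma \ref{lemma:MontgomeryVaughan} and Lemma \ref{lemma:diagonaltermsum} bounds the inner integral by $|\zeta(1+2iT)|^2(\log T)\bigl(R(N_j^*)^{-2\sigma}+(N_j^*)^{1-2\sigma}\bigr)$, where $N_j^*$ is the effective cutoff for $\gamma_j$. For $K_1$, take $\sigma=-\tfrac12+\varepsilon$ and $N_1^*=N_0^-=\sqrt{\Delta T}/(\log T)^2$; the second summand dominates and yields $K_1 \ll (\log T)^{-3+O(\varepsilon)}|\zeta(1+2iT)|^2$. For $K_3$, take $\sigma=A$ with $A$ large and $N_3^*=N_0^+=\sqrt{\Delta T}(\log T)^\varepsilon$; this gives $K_3 \ll (\log T)^{1+\varepsilon(1-2A)}|\zeta(1+2iT)|^2$, acceptable for $A$ sufficiently large. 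For $K_2$ take $\sigma=0$: the prefactor $(T\Delta)^{-1/2}$ (or $T^{-1/2}$ when $\Delta\ll(\log T)^\varepsilon$) matches the statement exactly, producing \eqref{eq:QWERTYbound1} and \eqref{eq:QWERTYbound2} respectively.

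The main technical point is the uniform Stirling bound on shifted contours, which is routine but becomes slightly delicate at the smallest $\Delta$, where the second gamma factor transitions from oscillatory to polynomial behaviour. Since the contour $\mathrm{Re}(u)>-\tfrac12$ avoids all poles of $\gamma_{V_T}$, and the truncation $|\mathrm{Im}(u)|\ll\Delta$ (or $(\log T)^\varepsilon$) is furnished for free by the rapid decay of $\widetilde\psi$, no real obstacle arises. The remainder of the argument is standard harmonic analysis, essentially the same separation-of-variables technique used throughout Section \ref{section:smallmediumDelta}.
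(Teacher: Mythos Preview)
Your proposal is correct and follows essentially the same route as the paper's proof. Both split into $K_1+K_2+K_3$ via Cauchy's inequality, shift the $u$-contour to $\sigma_i$ chosen according to the range $\mathcal{N}_i$, and use the mean value theorem together with Lemma~\ref{lemma:diagonaltermsum} to dispose of the tails $K_1,K_3$; the treatment of $K_2$ by the separation-of-variables/change-of-variables argument from Lemma~\ref{lemma:F0integralSimplified} is also the same.

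There are only cosmetic differences. The paper takes $\sigma_1=-\tfrac14$ rather than your $-\tfrac12+\varepsilon$; either works, and your choice is safely to the right of the first pole of $\gamma_{V_T}$. More interestingly, the paper applies Cauchy--Schwarz in the form
\[
\Big|\int \widetilde\psi(-u)\,\gamma_{V_T}\,D\,du\Big|^2
\le \Big(\int|\widetilde\psi(-v)|\,\cosh(\pi T)\,|\gamma_{V_T}|^2\,dv\Big)\Big(\int|\widetilde\psi(-u)|\,|D|^2\,du\Big),
\]
which separates the gamma factor from the Dirichlet polynomial and then bounds the first integral by \eqref{eq:psiMellingammaVTIntegralBound}. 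You instead keep $\gamma_{V_T}$ with the polynomial, change variables $t\mapsto t-\mathrm{Im}(u)$ first, and then invoke the pointwise Stirling bound on the new $t$-range. Both orderings give the same prefactor $(\Delta T)^{\sigma-1/2}$ (respectively $T^{\sigma-1/2}$ for small $\Delta$), so the final bounds agree. One small caveat: for $\sigma=A$ large and $\Delta\ll(\log T)^\varepsilon$, the second gamma factor is not literally $O(1)$ but rather $O_A((\log T)^{\varepsilon A})$; this is harmless since it is absorbed by choosing $A$ large relative to $1/\varepsilon$, but you should say so.
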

The error terms here contribute $\ll (\log T)^{-1}$ to \eqref{eq:Fintegraltverysmall}, after summing over the $O(\log T)$ values of $\Delta$ in \eqref{eq:KDeltaSum}, which is satisfactory.

\begin{proof}

By Cauchy's inequality, it suffices to bound the three ranges $\mathcal{N}_i$ separately.  For $n \ll N_0^{-}$ (that is, $i=1$), we take $\sigma_1 = -\frac14$, and for $n \gg N_0^{+}$ ($i=3$), we take $\sigma_3 = 1/\varepsilon^2$ (large).  In each $n$-range, we then move the $u$-integral outside and apply Cauchy--Schwarz in the following form (similarly to \eqref{eq:psiseparation1}  above):
\begin{multline}
\label{eq:KDeltaSeparationofVariables}
 \int_t \Big| \sum_{n \in \mathcal{N}_i} \frac{\tau_{iT}(n)
e(nx) \gamma_i(n)}{n^{1/2 + it}} \frac{1}{2 \pi i} \int_{(\sigma_i)} \widetilde{\psi}(-u) n^{-u}
\cosh(\pi T/2) \gamma_{V_T}(1/2 + it+u) du \Big|^2 dt
\\
\ll
\int_t \Big[ \Big(\int_{(\sigma_i)} |\widetilde{\psi}(-v)| \cosh(\pi T) |\gamma_{V_T}(1/2+it+v)|^2 dv \Big) 
\\
\times \Big( \int_{(\sigma_i)} |\widetilde{\psi}(-u)| \Big| \sum_{n \in \mathcal{N}_i} \frac{\tau_{iT}(n)
e(nx) \gamma_i(n)}{n^{1/2 + it+u}} \Big|^2 du  \Big) \Big] dt.
\end{multline}
We next argue that
\begin{equation}
\label{eq:psiMellingammaVTIntegralBound}
 \int_{(\sigma)} |\widetilde{\psi}(-v)| \cosh(\pi T) |\gamma_{V_T}(1/2+it+v)|^2 dv
 \ll (1 + |t-T|)^{\sigma - \half} (1 + |t+T|)^{\sigma-\half}.
\end{equation}
By Stirling's approximation, the left hand side of \eqref{eq:psiMellingammaVTIntegralBound}  is bounded by
\begin{equation*}
 \intR |\widetilde{\psi}(-\sigma - iy)| (1 + |t+T+y|)^{\sigma-\half} (1 + |t-T+y|)^{\sigma-\half} dy.
\end{equation*}
The main range of the integral is $|y| \leq 1 + \half |t-T|$, which by a trivial bound leads to the claimed bound.  If $|y| \geq 1 + \half |t-T|$, then the rapid decay of $\widetilde{\psi}$ can be used to show that this part of the integral is negligible by comparison.  
Hence
\begin{equation}
\label{eq:tinyDeltaNiceExpressionAA}
 K(\Delta, T, x) \ll \sum_{i=1}^{3} \int \limits_{|t-T| \asymp \Delta} (\Delta T)^{\sigma_i-\half} 
 \intR |\widetilde{\psi}(-\sigma_i - i y)|
 \Big| \sum_{ n \in \mathcal{N}_i} \frac{\tau_{iT}(n)
e(nx) \gamma_i(n)}{n^{1/2 +\sigma_i + it + iy}} \Big|^2 dy dt.
\end{equation}

To focus on the important parts here, we shall apply the mean value theorem for Dirichlet polynomials (combined with Lemma \ref{lemma:diagonaltermsum}) in some ranges where this is acceptable.  In \eqref{eq:tinyDeltaNiceExpressionAA}, with $i =1$ (so $\sigma_1 = -1/4$), we have the bound
\begin{equation*}
 \sum_{n \ll N_0^{-}} \frac{|\tau_{iT}(n)|^2}{n^{1/2}} \frac{(\Delta + n)}{(\Delta T)^{3/4} } \ll \frac{|\zeta(1 + 2iT)|^2 \log T}{(\Delta T)^{3/4}} (\Delta (N_0^{-})^{1/2} + (N_0^{-})^{3/2}) \ll \frac{|\zeta(1+2iT)|^2}{(\log T)^{2}},
\end{equation*}
consistent with Lemma \ref{lemma:QWERTYbound}.
A similar (even stronger) bound holds for $\mathcal{N}_3$, and we omit the details as it is even easier than the above case (it saves an arbitrary power of $\log T$).  Therefore,
\begin{multline}
\label{eq:tinyDeltaNiceExpressionBB}
 K(\Delta, T, x) \ll  \int \limits_{|t-T| \asymp \Delta} (\Delta T)^{-\half} 
 \intR |\widetilde{\psi}( - i y)|
 \Big| \sum_{N_0^{-} \ll  n \ll N_0^{+}} \frac{\tau_{iT}(n)
e(nx) \gamma_2(n)}{n^{1/2  + it + iy}} \Big|^2 dy dt 
\\
+ \frac{|\zeta(1+2iT)|^2}{(\log T)^{2}}.
\end{multline}

If $\Delta \gg (\log T)^{\varepsilon}$, then we can use an argument as in the proof of Lemma \ref{lemma:F0integralSimplified} to truncate the $y$-integral at $\frac{1}{100} \Delta$, and then over-extend the $t$-integral slightly, which is valid by the positivity of the integrand.  The error term obtained in this way saves an arbitrarily large power of $\log T$ which is acceptable for these values of $\Delta$.  This proves \eqref{eq:QWERTYbound1}.

Next suppose $\Delta \ll (\log T)^{\varepsilon}$.  In this situation, we truncate the $y$-integral at $(\log T)^{\varepsilon}$, which by the rapid decay of $\widetilde{\psi}$ introduces an error term that saves an arbitrarily large power of $\log T$ over the trivial bound, which is satisfactory.  We then use a simple over-estimate in the following shape: 
\begin{equation*}
(\Delta T)^{-1/2} \int_{|t-T| \asymp \Delta}  |f(t)|^2 dt \leq T^{-1/2} \int_{|t-T| \ll (\log T)^{\varepsilon}} |f(t)|^2 dt.
\end{equation*}
At this point, the $t$-integral has the same length as the $y$-integral, so we can again change variables $t \rightarrow t-y$ and double the length of $t$-integration to separate the variables $t$ and $y$.  This proves \eqref{eq:QWERTYbound2}.
\end{proof}

Now we are ready to prove the bound \eqref{eq:Fintegraltverysmall}. 
First, we claim
\begin{equation}
\label{eq:claimedBoundKDelta}
 \sum_{\substack{(\log T)^{100} \ll \Delta \ll T^{1/100} \\ \text{dyadic}}} 
 \int_{|t-T| \asymp \Delta}  \frac{(\Delta T)^{-\half}}{|\zeta(1+2iT)|^2}
\Big| \sum_{N_0^{-} \ll n \ll N_0^{+}} \frac{\tau_{iT}(n)
e(nx) \gamma_2(n)}{n^{1/2 +it}} \Big|^2  dt
 \ll (\log T)^{\varepsilon}.
\end{equation}
In order to apply the results from Section \ref{section:smallmediumDelta}, we apply a dyadic partition of unity within the $n$-sum, and apply Cauchy's inequality.  
Since $\log(N_0^+/N_0^-) \ll \log \log T$, there are at most $O(\log \log T)$ such dyadic pieces to consider.
We then open the divisor function $\tau_{iT}(n) = \sum_{ab=n} (a/b)^{iT}$, and apply a partition of unity to the $a$- and $b$-sums, leading to a similar decomposition as in Lemma \ref{lemma:IDeltaDyadicDecomposition}.  One small difference is that now $n \asymp M$, where $N_0^- \ll M \ll N_0^+$.  We apply Lemma \ref{lemma:smallB} to satisfactorily bound $I_{B_0}$, choosing $B_0 = T^{13/56}$.  The factor $(\log T)^{\varepsilon}$ arises because $\frac{N_0^+}{(\Delta T)^{1/2}} \ll (\log T)^{\varepsilon}$.  Now we consider $I_{A,B}$ where $B \gg T^{\frac{13}{56}}$.  By \cite[Theorem 2.9]{GK}, with $q=3, Q=8$, and partial summation, we have
\begin{equation*}
 \sum_{b \asymp B} \frac{e(abx)}{b^{1/2+2iT-it}} \ll B^{1/2}  \frac{T^{1/30}}{B^{1/6}}.
\end{equation*}
Therefore, trivially summing over $a$ and integrating over $t$, we obtain
\begin{equation*}
 I_{A,B} \ll AB \Delta \frac{T^{1/15}}{B^{1/3}} \ll AB T^{\frac{1}{100}+\frac{1}{15}-\frac{13}{168}} = M T^{-1/1400}.
\end{equation*}
Since this gives a power saving, it is easily seen to be satisfactory.  This shows \eqref{eq:claimedBoundKDelta}.

Next, consider the terms with $(\log T)^{\varepsilon} \ll \Delta \ll (\log T)^{100}$, as well as $\Delta \ll (\log T)^{\varepsilon}$.  In both cases, we reduce to $O((\log T)^{\varepsilon})$ instances of expressions of the form
\begin{equation*}
 \frac{(\log T)^{\varepsilon}}{M |\zeta(1+2iT)|^2}
 \int_{|t-T| \ll (\log T)^{100}} 
 \Big|
\sum_{n} \frac{\tau_{iT}(n)
e(nx) w_2(n/M)}{n^{1/2 +it}} \Big|^2  dt.
 \end{equation*}
Again, we decompose this as in Lemma \ref{lemma:IDeltaDyadicDecomposition}, with $B_0 = T^{13/56}$.
The $I_{A,B}$ parts are bounded as in the previous paragraph, giving a power saving.
The $I_{B_0}$ part is bounded with Lemma \ref{lemma:smallB}.  The remark following Lemma \ref{lemma:smallB} indicates that in the present setting, where $w_1$ has support on, say, $[-10,10]$, we may only claim $S_{\Delta} \ll 1$.  This is acceptable, because there are only $O((\log T)^{\varepsilon})$ such instances of these expressions.  This completes the proof.

\section{Corollary \ref{coro:signchanges}}
\label{section:shiftedQUE}
\subsection{Shifted QUE}
The main tool in proving Corollary \ref{coro:signchanges} is the following more general version of Theorem \ref{thm:QUE}.  Let $a$ be a fixed (small) real number, and define the shifted restricted QUE integral:
\begin{equation*}
 I_{\psi,a}(x,T) = \int_0^{\infty} \psi(y)
E_T^*(x+iy) \psi(y(1+T^{-1} a)) E_T^*(x+iy(1+T^{-1} a)) 
\frac{dy}{y}.
\end{equation*}

 \begin{mytheo}
\label{thm:shiftedQUE}
Let conditions be as in Theorem \ref{thm:QUE}.  Then 
\begin{equation}
\label{eq:shiftedQUE}
 I_{\psi,a}(x,T)  =   \frac{3}{\pi} \log(1/4 + T^2) \int_0^{\infty} \psi^2(y) \Big[J_0(a) + B_q(1,T) J_0\Big(\Big|\frac{\theta T}{y} + i a \Big| \Big)\Big] \frac{dy}{y} 
  + O((\log T)^{35/36+\varepsilon}).
\end{equation}
The implied constant depends continuously on $a$, and is independent of $x$.
\end{mytheo}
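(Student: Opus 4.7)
The plan is to follow the proof of Theorem \ref{thm:QUE} essentially verbatim, tracking a single additional oscillatory factor. Let $G(s)$ denote the Mellin transform of $y \mapsto \psi(y(1+T^{-1}a)) E_T^*(x+iy(1+T^{-1}a))$. The substitution $y \mapsto y/(1+T^{-1}a)$ yields $G(s) = (1+T^{-1}a)^{-s} F(s)$. Since $E_T^*$ and $\psi$ are real, $\overline{F(it)} = F(-it)$, so Parseval for the Mellin transform gives
\[
 I_{\psi,a}(x,T) = \frac{1}{2\pi}\intR |F(it)|^2 (1+T^{-1}a)^{it}\, dt.
\]

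The decomposition of this $t$-integral matches that used for Theorem \ref{thm:QUE}: the bulk range $|t| \leq T - \eta T$ with $\eta = (\log T)^{-1/18}$, and the three complementary ranges controlled by Propositions \ref{prop:biggisht}, \ref{prop:FboundmediumDelta}, and \ref{prop:verysmallDelta}. Those propositions bound $\int |F(it)|^2 dt$ over the complementary ranges, and since $|(1+T^{-1}a)^{it}| = 1$ the bounds apply to the twisted integral verbatim, contributing the error $O((\log T)^{35/36+\varepsilon})$.

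In the bulk range, the off-diagonal Holowinsky-type bound of Section \ref{section:largeDelta} uses absolute values and is insensitive to the unit-modulus weight. For the diagonal $|m|=|n|$ contributions, the residue analysis of Section \ref{section:residue} goes through, the one new feature being that each inner $t$-integral in \eqref{eq:JthetaMellinPrimeAtZero} acquires an extra factor $(1+T^{-1}a)^{it} = e^{iat/T}(1+O(T^{-1}))$ on $|t|\leq T$. After averaging over $\pm\theta$, only the cosine of $\theta\sqrt{T^2-t^2}/y$ survives; furthermore, the odd part $\sin(at/T)$ integrates to zero against this even remainder, so the problem reduces to evaluating
\[
 \int_{-T}^T \frac{\cos(\theta\sqrt{T^2-t^2}/y)\cos(at/T)}{\sqrt{T^2-t^2}}\, dt.
\]

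Under $t = T\cos\varphi$ this equals $\int_0^\pi \cos((\theta T/y)\sin\varphi)\cos(a\cos\varphi)\, d\varphi$. Using $\cos A\cos B = \tfrac12[\cos(A-B)+\cos(A+B)]$ and the identity $(\theta T/y)\sin\varphi \pm a\cos\varphi = \sqrt{(\theta T/y)^2+a^2}\,\sin(\varphi\pm\gamma)$ for a suitable phase $\gamma$, together with the $\pi$-periodicity of $\cos(R\sin\psi)$, the integral evaluates to $\pi J_0(\sqrt{(\theta T/y)^2+a^2}) = \pi J_0(|\theta T/y + ia|)$. The $m=n$ contribution (the $MT_0$ piece of Section \ref{section:diagonaltermsGeneralx}, formally the $\theta=0,\ q=1$ case) thus delivers the summand $J_0(a)$, while the $m=-n$ contribution delivers $B_q(1,T)J_0(|\theta T/y + ia|)$; their sum is the main term of \eqref{eq:shiftedQUE}. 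Uniformity in $x$ and continuous dependence on $a$ are automatic, since $a$ enters only via the bounded continuous factor $(1+T^{-1}a)^{it}$. I expect no genuinely new obstacle; the slightly more elaborate residue calculation above is the only real wrinkle.
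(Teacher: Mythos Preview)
Your proposal is correct and follows the same overall architecture as the paper: Parseval gives $I_{\psi,a}(x,T)=\frac{1}{2\pi}\intR |F(it)|^2(1+T^{-1}a)^{\pm it}\,dt$, the complementary ranges of $t$ are controlled by the existing upper bounds on $\int |F(it)|^2\,dt$ from Propositions \ref{prop:biggisht}, \ref{prop:FboundmediumDelta}, \ref{prop:verysmallDelta}, and the bulk range supplies the main term with a modified diagonal computation.

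Where you diverge from the paper is in the evaluation of that diagonal. The paper (following \cite{JungYoung}) Taylor-expands $(1+a/T)^{-it}=\sum_{\ell\le L}\binom{-it}{\ell}(a/T)^\ell+O(10^{-L})$ with $L\asymp\log\log T$, so that $J(n,n)$ acquires a factor $(t/T)^\ell$; each moment integral $\int_{-T}^T (t/T)^{2\ell}(T^2-t^2)^{-1/2}\cos(u\sqrt{T^2-t^2}/T)\,dt$ is evaluated as a multiple of $J_\ell(u)$, and the resulting series is resummed to $J_0(\sqrt{u^2+a^2})$. Your route is more direct: you approximate $(1+a/T)^{it}=e^{iat/T}(1+O(a^2/T))$ and evaluate $\int_0^\pi \cos(u\sin\varphi)\cos(a\cos\varphi)\,d\varphi=\pi J_0(\sqrt{u^2+a^2})$ in one step via the product-to-sum identity and the $\pi$-periodicity of $\varphi\mapsto\cos(R\sin\varphi)$. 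This is shorter and arguably more transparent; the paper's expansion buys a bit of extra structure (individual Bessel moments) and reuses machinery already set up in \cite{JungYoung}.

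One small caveat: your assertion that the off-diagonal Holowinsky bound ``uses absolute values and is insensitive to the unit-modulus weight'' is slightly glib. The bound \eqref{eq:Jbound} on $|J(n,n+h)|$ comes from integration by parts in $t$, so it \emph{does} see the oscillation of the integrand. The point is rather that $\frac{d}{dt}(1+a/T)^{it}=O(a/T)$, which is dominated by the existing scale $(\eta T)^{-1}$ in the derivative bounds on $j(m,n,t,T)$; hence \eqref{eq:Jbound} survives unchanged. With that clarification, your argument goes through.
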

To deduce Corollary \ref{coro:signchanges}, we let $a = a_0 = 3.831\dots$ be the location of the global minimum of $J_0(x)$ for $x \geq 0$.  Then $J_0(a_0) = -0.4027\dots$, and $|J_0(x)| < |J_0(a_0)|$ for all $x > a_0$.  Hence $|J_0(|y^{-1} \theta T + ia|)| \leq |J_0(a_0)|$ (with equality iff $\theta = 0$, i.e. $x$ is rational).  We also claim that $|B_q(1,T)| < 1$, for all $q \geq 2$, which we prove in Section \ref{section:BqBound} below.  Of course, $B_q(1,T) = 1$ for $q=1$.
In any event, we may deduce that $I_{\psi, a_0}(x,T) < 0$ for all $x \in \mathbb{R}$ and $T$ sufficiently large.  Therefore, there must exist $y$ in the support of $\psi$ so that $E_T^*(x+iy)$ and $E_T^*(x+iy(1+T^{-1} a_0))$ have opposite signs, and hence also a zero in-between.

Now we embark on the proof of Theorem \ref{thm:shiftedQUE}.  Since many of the details are nearly identical to those in the proof of Theorem \ref{thm:QUE}, we will be brief in those instances.  We also refer to \cite{JungYoung} for more in-depth explanations in certain locations.  In place of \eqref{eq:IpsiFitSquared}, we instead have
\begin{equation*}
I_{\psi,a}(x,T) = \frac{1}{2 \pi} \intR |F(it)|^2 \Big(1 + \frac{a}{T}\Big)^{-it} dt,
\end{equation*}
for which see \cite[(5.5)]{JungYoung}.
Therefore, any already-proven upper bound on an integral of $|F(it)|^2$ over some interval immediately implies the same bound here.  In particular, the proof of Theorem \ref{thm:QUE} gives that
\begin{equation*}
I_{\psi,a}(x,T) = \frac{1}{2 \pi} \int_{|t| \leq T - \eta T} |F(it)|^2 \Big(1 + \frac{a}{T}\Big)^{-it} dt + O(\eta^{1/2} \log T),
\end{equation*}
where $\eta$ is some fixed small negative power of $\log T$.  We may also freely replace $F$ by $F_0$ in the above approximation.  

Now we follow some of the steps from \cite{JungYoung} to handle this new weight $(1+T^{-1} a)^{-it}$.
By \cite[Lemma 5.4]{JungYoung}, we have
\begin{equation*}
\Big(1 + \frac{a}{T}\Big)^{-it} = \sum_{\ell = 0}^{L} \binom{-it}{\ell} \Big(\frac{a}{T} \Big)^{\ell} + O(10^{-L}),
\end{equation*}
uniformly for $|t| \leq T$ and $a$ in some fixed compact set.   We pick $L \asymp \log \log T$, with an implied constant large enough so that $10^{-L} \log T \ll \eta^{1/2} \log T$.  Hence
 \begin{equation*}
I_{\psi,a}(x,T) = \sum_{\ell = 0}^{L} a^{\ell} \frac{1}{2 \pi} \int_{|t| \leq T - \eta T} 
\binom{-it}{\ell} T^{-\ell} 
|F_0(it)|^2 dt + O(\eta^{1/2} \log T).
\end{equation*}
For $\ell \geq 1$, the contribution from $|t| \leq T^{1/2}$ (say), is $O(T^{-1/2+\varepsilon})$, On the other hand, when $|t| \geq T^{1/2}$, then we have
\begin{equation*}
\binom{-it}{\ell} = \frac{(-it)^{\ell}}{\ell !} ( 1 + O(|t|^{-1} \ell^2)),
\end{equation*}
 which is a small error term.  After applying this approximation, we may then extend the integral back to $|t| \leq T^{1/2}$ with an error that is absorbed by the existing error term.  Thus,
 \begin{equation*}
 I_{\psi,a}(x,T) = \sum_{\ell = 0}^{L} \frac{(-ia)^{\ell}}{\ell !} \frac{1}{2 \pi} \int_{|t| \leq T - \eta T} 
\Big(\frac{t}{T}\Big)^{\ell} 
|F_0(it)|^2 dt + O(\eta^{1/2} \log T).
 \end{equation*}

Now we carry through the arguments from Section \ref{section:largeDelta}.  Essentially the only difference is that $J(m,n)$ is altered by the presence of the factor $(t/T)^{\ell}$ in \eqref{eq:Jdef}.  It is easy to see that the $k$-th derivative bound on the new $J$ is altered by multiplication by at most $O(\ell^k) \ll (\log \log T)^k$.  These factors are easily absorbed by a factor $(\log T)^{\varepsilon}$.  We therefore obtain the same error term stated in \eqref{eq:F0integralErrorTerm}.  The main term changes in an interesting way, however, so we now focus on that aspect.  

The analog of the formula \eqref{eq:JthetaMellinPrimeAtZero} for $\widetilde{J}_{\theta}'(0)$ is now of the form
\begin{equation*}
\frac{\pi^2}{2} \log T \int_0^{\infty} \psi^2(y) 
 \Big[ \int_{|t| \leq T} \Big(\frac{t}{T}\Big)^{\ell} (T^2-t^2)^{-1/2}  e^{i \theta \frac{\sqrt{T^2-t^2}}{y}} \frac{dt}{2 \pi} \Big] \frac{dy}{y}.
\end{equation*}
 For $\ell$ odd this integral vanishes, while for $\ell$ even we have from \cite[8.411.6]{GR} and gamma function identities, that
 \begin{equation*}
  \int_{|t| \leq T} \Big(\frac{t}{T}\Big)^{2\ell} (T^2-t^2)^{-1/2}  \cos\Big(\theta \frac{\sqrt{T^2-t^2}}{y} \Big) dt = \pi \frac{(2\ell)!}{\ell!} 2^{-2\ell} \Big(\frac{\theta T}{2 y}\Big)^{-\ell} J_{\ell}\Big(\frac{\theta T}{ y}\Big).
 \end{equation*}
This has the effect that the term $J_0(y^{-1} \theta T)$ appearing in Theorem \ref{thm:QUE} is now replaced by
\begin{equation*}
S:=\sum_{\ell=0}^{\infty} \frac{ (-1)^{\ell} a^{2\ell} }{(2 \ell)!} 
\frac{(2\ell)!}{\ell!} 2^{-2\ell} (u/2)^{-\ell} J_{\ell}(u),
\end{equation*}
where $u = y^{-1} \theta T$.
 Technically, the sum we obtain is truncated at $2 \ell \leq L$, but may be extended to $\infty$ with an error that is absorbed by the existing error term.  We claim
 \begin{equation*}
 S = J_0(|u + i a|).
 \end{equation*}
To see this, we insert the power series expansion of $(u/2)^{-\ell} J_{\ell}(u)$, which gives
\begin{equation*}
S = \sum_{\ell=0}^{\infty} \frac{ (-1)^{\ell} a^{2\ell} }{\ell!} 
 2^{-2\ell} \sum_{k=0}^{\infty}  \frac{ (-1)^{k} u^{2k} }{k! \Gamma(k+\ell+1)} 
 2^{-2k}.
\end{equation*} 
Letting $k+\ell = m$, this neatly simplifies with the binomial theorem to give
\begin{equation*}
S = \sum_{m=0}^{\infty} \frac{(-1)^m}{2^{2m} m!^2} (a^2 + u^2)^m,
\end{equation*} 
 which is precisely the power series expansion of $J_0(\sqrt{a^2 + u^2})$, as claimed.  This completes the proof of Theorem \ref{thm:shiftedQUE}.
 
\subsection{Bounding $B_q(1,T)$}
\label{section:BqBound} 
 It is clear that $B_q(1,T) \ll q^{-1+\varepsilon}$, but it is pleasant to claim a clean bound valid for all $q$, which allows for the uniform statement in Corollary \ref{coro:signchanges} (with no exceptions near rationals with small denominators).

If $q = p^{q_p}$ with $p$ prime and $q_p \geq 1$, then
\begin{equation*}
B_q(1,T) = \frac{Y+X}{Z+X},
\end{equation*}
where $Y = - (p-1)^{-1} p^{-(q_p-1)} \tau_{iT}(p^{q_p-1})^2$, $X = \sum_{j=q_p}^{\infty} p^{-j} |\tau_{iT}(p^j)|^2$, and $Z = \sum_{j=0}^{q_p-1} p^{-j} |\tau_{iT}(p^j)|^2$.  It is easy to see $B_q(1,T) < 1$, since $Y \leq 0 < Z$.  It is also easy to see $-Y < Z$, whence $B_q(1,T) > \frac{-Z+X}{Z+X} \geq -1$.  Thus, each local factor making up $B_q(1,T)$ has absolute value strictly less than $1$, and so $|B_q(1,T)| < 1$ for all $q \geq 2$.

\section{Maass cusp forms}
\label{section:conjecture}
Here we provide some discussion of Conjecture \ref{conj:QUEcuspCase}.  Begin by writing the Fourier expansion in the form 
\begin{equation*}
u_j(x+iy) = \rho_j(1) \sum_{n \neq 0} \frac{\lambda_{u_j}(n) e(nx)}{|n|^{1/2}} V_{T_j}(2 \pi |n| y).  
\end{equation*}
Here $\rho_j(1)$ is determined by the normalization of $u_j$.
The initial developments from Section \ref{section:initialdevelopments} carry over with minimal changes.   We expect that the main term comes from the interval $|t|  \leq T - o(T)$, which is the range of $t$ studied in Section \ref{section:largeDelta}.  Moreover, we expect that there is cancellation in shifted convolution sums with Hecke eigenvalues, and so the main term should come from the diagonal, as in \eqref{eq:MainTermDef}.  It is easy to formally carry through the argument from Section \ref{section:diagonaltermsGeneralx}.  The main difference is that $\mathcal{L}_d(s)$ now takes the form
\begin{equation*}
 \mathcal{L}_d(s) = \sum_{n=1}^{\infty} \frac{\lambda_{u_j}(dn)^2 \chi(n)}{n^s}.
\end{equation*}
This Dirichlet series has a pole iff $\chi$ is principal, and moreover the pole is simple (for comparison, in the Eisenstein series case, the pole is double).

The contribution from $\chi = \chi_0$ is then analogous to \eqref{eq:MTx0}, but with
\begin{equation*}
 Z_q(s) = \sum_{n=1}^{\infty} \frac{\lambda_{u_j}(n)^2}{n^s} \frac{\mu(q/(n,q))}{\varphi((q/(n,q))} = 
 Z_1(s) B_q(s,u_j),
\end{equation*}
say, where
 \begin{equation}
\label{eq:BqDefCuspFormCase}
 B_q(s,u_j) =  \prod_{p^{q_p} || q} \frac{\sum_{k=0}^{\infty} 
\frac{\lambda_{u_j}(p^k)^2}{p^{ks}} \frac{\mu(p^{q_p}/(p^k,p^{q_p}))}{\varphi((p^{q_p}/(p^k,p^{q_p}))}
}{\sum_{k=0}^{\infty} 
\frac{\lambda_{u_j}(p^k)^2}{p^{ks}} }.
\end{equation}
By standard Rankin--Selberg calculations, we have that $\frac{\rho_j(1)^2}{\cosh( \pi T_j)} \text{Res}_{s=1} Z_1(s)$ equals an absolute constant.  We indirectly work out the constant of proportionality at the end.

In Section \ref{section:residue}, we bounded $\widetilde{J_{\theta}}(0)$, and asymptotically evaluated $\sum_{\pm} \widetilde{J}_{\pm \theta}'(0)$.  In the cusp form case there is no reason to analyze $\widetilde{J}_{\pm \theta}'(0)$, since the Dirichlet series only has a single pole, and instead we need to asymptotically evaluate $\sum_{\pm} \widetilde{J}_{\pm \theta}(0)$.  Luckily, we may re-use the calculations from Section \ref{section:residue}, since in \eqref{eq:Jtheta'tilde0def} we used the approximation $\log x \sim \log T$ (with now $T = T_j$).  Hence, the main term of $\sum_{\pm} \widetilde{J}_{\pm \theta}'(0)$ is simply $\log T$ times the main term of $\sum_{\pm} \widetilde{J}_{\pm \theta}(0)$, for which one may then consult \eqref{eq:JMellinResidueCalculation}.

Putting everything together, we obtain the main term as in the right hand side of \eqref{eq:QUEcuspCase}, but multiplied by some absolute constant of proportionality, which we claim is $1$.  One can check this by comparison with \cite[Conjecture 1.1]{Young}, by specializing $q=1$.  Alternatively, one may integrate both sides of \eqref{eq:QUEcuspCase} over $0 \leq x \leq 1$, in which case the result must be consistent with the QUE theorem of Lindenstrauss and Soundararajan.  As discussed in the introduction, the term with $B_q(1, u_j) J_0(\frac{\theta T_j}{y})$ is negligible except on a set of $x$  of measure $0$.  

It may also be worth commenting on the presence of the factor $\delta_j$, which is only implicitly visible in Theorem \ref{thm:QUE}, since the Eisenstein series is even.  This $\delta_j$ factor arises from studying the ``opposite'' diagonal terms (those terms with $m = -n$), which are precisely those terms leading to the secondary main term with $B_q$ and the $J_0$-Bessel function.

We also extend Conjecture \ref{conj:QUEcuspCase} into an analog of Theorem \ref{thm:shiftedQUE} in the obvious way.

\end{document}